\documentclass[11pt]{article}
\usepackage[T1]{fontenc}
\usepackage[utf8]{inputenc}
\usepackage[english]{babel}
\usepackage{lmodern}
\usepackage{microtype}
\usepackage{amsmath,amssymb,amsfonts,amsthm,mathtools}
\usepackage[shortlabels]{enumitem}
\usepackage{geometry}
\usepackage[colorlinks=true,linkcolor=blue,citecolor=blue,urlcolor=blue]{hyperref}
\newtheorem{theorem}{Theorem}[section]
\newtheorem{lemma}[theorem]{Lemma}

\newtheorem{proposition}[theorem]{Proposition}
\newtheorem{corollary}[theorem]{Corollary}

\newtheorem{remark}[theorem]{Remark}

\newtheorem{conjecture}[theorem]{Conjecture}
\newtheorem{question}[theorem]{Question}
\newcommand{\eps}{\varepsilon}
\newcommand{\cA}{\mathcal{A}}
\newcommand{\cC}{\mathcal{C}}
\newcommand{\cH}{\mathcal{H}}
\newcommand{\cM}{\mathcal{M}}
\newcommand{\cP}{\mathcal{P}}
\newcommand{\cS}{\mathcal{S}}
\newcommand{\cU}{\mathcal{U}}
\newcommand{\cW}{\mathcal{W}}
\newcommand{\Emb}{\operatorname{Emb}}
\newcommand{\Hom}{\operatorname{Hom}}
\newcommand{\dist}{\operatorname{dist}}
\newcommand{\rank}{\operatorname{rank}}
\newcommand{\im}{\operatorname{im}}
\newcommand{\codim}{\operatorname{codim}}
\title{Compactness of abundance in asymmetric hypergraph removal lemmas}
\author{Shuang Sun\thanks{Email: \texttt{chocolatesun@sjtu.edu.cn}},\quad Yan Wang\thanks{Email: \texttt{yan.w@sjtu.edu.cn}},\quad Yuyao Yang\thanks{Email: \texttt{alaia\_y@sjtu.edu.cn}},\quad and Jiasheng Zeng\thanks{Email: \texttt{jasonzeng@mail.ustc.edu.cn}}}
\date{\today}

\begin{document}
\maketitle

\begin{abstract}
Fix an integer $r\ge 2$ and a finite simple $r$-uniform hypergraph $F$ with at least one edge and no isolated vertices. An $n$-vertex $r$-graph is $\eps$-far from being $F$-free if at least $\eps n^r$ edges must be deleted to destroy every copy of $F$. A finite $r$-graph $H$ is $F$-abundant if there are constants $c,C>0$ such that every sufficiently large $\eps$-far host contains at least $c\eps^C n^{v(H)}$ labelled copies of $H$. A family is $F$-abundant when one member has this lower bound in each host, although the member may depend on the host and on $\eps$, while $c$ and $C$ are common to the family. We prove that every abundant family contains an abundant member. We prove the analogous coloured theorem for $F$-partite hosts containing edge-disjoint part-respecting copies of $F$ such that every vertex lies in at least $\eps n^{r-1}$ of them. The case $r=2$ yields the coloured and uncoloured graph compactness theorems, answers Question 5.2 of Gir\~ao, Hurley, Illingworth and Michel, and proves their Conjecture 5.1 [J. Lond. Math. Soc., 2024]. We also obtain an explicit bound $\lfloor 2r(C+1)\rfloor$ for the order of the non-isolated core of a selected witness. Moreover, we give several applications. For example, we construct translation-invariant linear systems from abundant coloured hypergraphs, obtain a square-root bound for an equation associated with a cycle of bounded length, give a one-sided tester based on one fixed graph when distance from the property gives a polynomial lower bound on distance from being $F$-free, and prove that no algorithm decides whether a family of finite simple graphs enumerated by a Turing machine is $K_3$-abundant.
\end{abstract}

\section{Introduction}
\label{sec:intro}

Let $F$ be a fixed graph. A graph is $F$-free if it contains no copy of $F$, and an $n$-vertex graph $G$ is $\eps$-far from being $F$-free if at least $\eps n^2$ edges must be deleted from $G$ to make it $F$-free. The triangle removal lemma of Ruzsa and Szemer\'edi \cite{RS78} states that for every $\eps>0$ there is $\delta=\delta(\eps)>0$ such that every sufficiently large $n$-vertex graph that is $\eps$-far from being triangle-free contains at least $\delta n^3$ triangles. The lemma arose from the study of the $(6,3)$-problem and gives a graph-theoretic proof of Roth's theorem on three-term arithmetic progressions \cite{Roth53,RS78}. Erd\H{o}s, Frankl and R\"odl extended it to every fixed graph \cite{EFR86}. More precisely, for every graph $F$ and every $\eps>0$ there is $\delta=\delta(F,\eps)>0$ such that every sufficiently large $n$-vertex graph that is $\eps$-far from being $F$-free contains at least $\delta n^{v(F)}$ labelled copies of $F$. Regularity proofs imply that $\delta^{-1}$ is a tower of twos whose height is polynomial in $1/\eps$, and Fox reduced the height to $O_F(\log(1/\eps))$ \cite{Fox11}. Behrend's construction and the construction of Ruzsa and Szemer\'edi show that $\delta$ cannot admit a polynomial lower bound when $F=K_3$ \cite{Beh46,RS78}. Alon proved more generally that there are constants $c_F,C_F>0$ for which one may take $\delta\ge c_F\eps^{C_F}$ if and only if $F$ is bipartite \cite{Alon02}. The graph removal lemma has many applications in extremal graph theory and additive combinatorics \cite{ConlonFox13,Zhao23}. It is also a basic tool in property testing, which asks whether a large graph has a given property or is far from having it by examining only a small random sample of its vertices \cite{GGR98,Alon02,AlonShapira08}.

The graph removal lemma extends to uniform hypergraphs. For $r\ge2$, an $r$-graph is a finite simple $r$-uniform hypergraph, and an $n$-vertex $r$-graph $G$ is $\eps$-far from being $F$-free if at least $\eps n^r$ edges must be deleted from $G$ to destroy every copy of $F$. The hypergraph removal lemma states that for every fixed $r$-graph $F$ and every $\eps>0$ there is $\delta=\delta(F,\eps)>0$ such that every sufficiently large $n$-vertex $r$-graph that is $\eps$-far from being $F$-free contains at least $\delta n^{v(F)}$ labelled copies of $F$. R\"odl, Nagle, Skokan, Schacht and Kohayakawa developed one version of the hypergraph regularity and counting method used to prove the hypergraph removal lemma \cite{RodlEtAl05}. Gowers independently developed another approach \cite{Gowers07}. Tao later gave a self-contained proof of a slightly stronger variant of the hypergraph removal lemma \cite{Tao06}. The general proofs of hypergraph removal give extremely weak dependence of $\delta$ on $\eps$. Gishboliner and Shapira determined exactly when polynomial dependence is possible, proving that there are constants $c_F,C_F>0$ such that one may take $\delta\ge c_F\eps^{C_F}$ if and only if $F$ is $r$-partite \cite{GishbolinerShapira25}. By encoding arithmetic configurations as copies of fixed uniform hypergraphs, the lemma gives combinatorial proofs of Szemer\'edi's theorem and its multidimensional extension \cite{Szemeredi75,RodlEtAl05,Gowers07,Tao06}. Shapira used a coloured form of the hypergraph removal lemma to prove a removal lemma for systems of linear equations \cite{Shapira10}.

Polynomial removal bounds have also been studied for arithmetic configurations over finite fields \cite{FL17,FLS18}, ordered graphs \cite{GishbolinerSimic24,GT22}, matrices \cite{ABE20}, posets \cite{FeketeKun23}, and graph properties \cite{GS21}. In this paper, we study asymmetric removal, where the hypergraph $F$ that cannot be removed and the hypergraph $H$ that is counted may be different.

We first give the definitions needed throughout the paper. For $r$-graphs $H$ and $G$, a homomorphism from $H$ to $G$ is a map $\phi\colon V(H)\to V(G)$ such that $\phi(e)\in E(G)$ for every $e\in E(H)$. An embedding is an injective homomorphism. We write $\Hom(H,G)$ and $\Emb(H,G)$ for the numbers of homomorphisms and embeddings from $H$ to $G$. Copies are not required to be induced.

Fix an integer $r\ge2$ and an $r$-graph $F$ with $e(F)>0$. An $n$-vertex $r$-graph $G$ is $\eps$-far from being $F$-free if at least $\eps n^r$ edges must be deleted from $G$ to make it $F$-free, as in the uniform-hypergraph definition of Gishboliner and \v{S}imi\'c \cite[p.~2]{GishbolinerSimic24}. An $r$-graph $H$ is $F$-abundant if there are constants $c,C>0$ such that, for every $0<\eps\le1$, there is $n_0(\eps)$ for which every $\eps$-far $n$-vertex $r$-graph $G$ with $n\ge n_0(\eps)$ satisfies $\Emb(H,G)\ge c\eps^C n^{v(H)}$. For graphs, this is the definition of $F$-abundance in \cite[Definition~2.1]{GHIM}, with copies counted by embeddings.

An $F$-colouring of an $r$-graph $G$ is a homomorphism $\kappa_G\colon G\to F$. The sets $V_a(G)=\kappa_G^{-1}(a)$ for $a\in V(F)$ form an $F$-partition of $G$, and an $r$-graph with such a partition is $F$-partite. An $F$-coloured $r$-graph is a pair $(H,\sigma)$ in which $\sigma\colon H\to F$ is an $F$-colouring. These definitions extend \cite[Definition~2.2]{GHIM} from graphs to $r$-graphs. If $G$ has a fixed $F$-partition, we write $\Hom_\sigma(H,G)$ and $\Emb_\sigma(H,G)$ for the numbers of homomorphisms and embeddings $\phi\colon H\to G$ satisfying $\kappa_G\circ\phi=\sigma$.

An $n$-vertex $F$-partite $r$-graph $G$ is uniformly $\eps$-far from being $F$-free if there is a collection $\cC$ of embeddings $\psi\colon F\to G$ such that the sets $\psi(E(F))$, for $\psi\in\cC$, are pairwise disjoint, $\kappa_G\circ\psi=\operatorname{id}_{V(F)}$ for every $\psi\in\cC$, and every vertex of $G$ belongs to $\psi(V(F))$ for at least $\eps n^{r-1}$ members $\psi\in\cC$. For graphs, this is the definition preceding \cite[Lemma~2.4]{GHIM}. An $F$-coloured $r$-graph $(H,\sigma)$ is $F$-abundant if there are constants $c,C>0$ such that, for every $0<\eps\le1$, there is $n_0(\eps)$ for which every uniformly $\eps$-far $n$-vertex $F$-partite $r$-graph $G$ with $n\ge n_0(\eps)$ satisfies $\Emb_\sigma(H,G)\ge c\eps^C n^{v(H)}$. For graphs, this is \cite[Definition~2.6]{GHIM}.

We next state the definition for families used by Gir\~ao, Hurley, Illingworth, and Michel. For a graph $H$ and a graph $G$, let $N(H,G)=\Emb(H,G)/|\operatorname{Aut}(H)|$. A family $\cU$ of graphs is triangle-abundant if there is $C>0$ such that, for every $0<\eps\le1$, there is $n_0(\eps)$ for which every $n$-vertex graph $G$ with $n\ge n_0(\eps)$ that has at least $\eps n^2$ pairwise edge-disjoint triangles contains some $H\in\cU$ satisfying $N(H,G)\ge\eps^C n^{v(H)}$. If $(H,\sigma)$ is $K_3$-coloured, let $\operatorname{Aut}(H,\sigma)$ be the set of automorphisms $\alpha$ of $H$ satisfying $\sigma\circ\alpha=\sigma$, and let $N_\sigma(H,G)=\Emb_\sigma(H,G)/|\operatorname{Aut}(H,\sigma)|$. A family $\cH$ of $K_3$-coloured graphs is triangle-abundant if there is $C>0$ such that, for every $0<\eps\le1$, there is $n_0(\eps)$ for which every $n$-vertex graph $G$ with $n\ge n_0(\eps)$ that has a $K_3$-partition and a collection of at least $\eps n^2$ pairwise edge-disjoint triangles whose three vertices lie in the three prescribed parts contains some $(H,\sigma)\in\cH$ satisfying $N_\sigma(H,G)\ge\eps^C n^{v(H)}$. This is the family definition preceding \cite[Question~5.2]{GHIM}, with the sufficiently large order convention used in \cite[Definitions~2.1 and~2.6]{GHIM}.

Gir\~ao, Hurley, Illingworth, and Michel observed that all known proofs that a graph $H$ is $F$-abundant proceed by finding an $F$-colouring $\sigma$ for which $(H,\sigma)$ is $F$-abundant. They asked whether this is necessary and whether the corresponding statement holds for families.

\begin{conjecture}[Gir\~ao, Hurley, Illingworth, and Michel \cite{GHIM}]\label{conjGHIM}
Let $F$ and $H$ be finite simple graphs with $e(F)>0$. The graph $H$ is $F$-abundant if and only if there is an $F$-colouring $\sigma$ of $H$ such that $(H,\sigma)$ is $F$-abundant.
\end{conjecture}

\begin{question}[Gir\~ao, Hurley, Illingworth, and Michel \cite{GHIM}]\label{quesGHIM}
Is a family of graphs triangle-abundant if and only if it contains a triangle-abundant graph? Is a family of $K_3$-coloured graphs triangle-abundant if and only if it contains a triangle-abundant $K_3$-coloured graph?
\end{question}

We extend the above definitions to families of $r$-graphs. A possibly infinite family $\cU$ of $r$-graphs is $F$-abundant if there are constants $c,C>0$ such that, for every $0<\eps\le1$, there is $n_0(\eps)$ for which every $\eps$-far $n$-vertex $r$-graph $G$ with $n\ge n_0(\eps)$ contains some $H\in\cU$ satisfying $\Emb(H,G)\ge c\eps^C n^{v(H)}$. A possibly infinite family $\cH$ of $F$-coloured $r$-graphs is $F$-abundant if there are constants $c,C>0$ such that, for every $0<\eps\le1$, there is $n_0(\eps)$ for which every uniformly $\eps$-far $n$-vertex $F$-partite $r$-graph $G$ with $n\ge n_0(\eps)$ contains some $(H,\sigma)\in\cH$ satisfying $\Emb_\sigma(H,G)\ge c\eps^C n^{v(H)}$. In each definition, the member of the family may depend on $G$ and $\eps$. The constants $c$ and $C$ are fixed for the whole family, while $n_0(\eps)$ may depend on $\eps$ but not on $G$ or on the selected member.

Our main theorem gives an affirmative answer to the $r$-uniform analogue of Question~\ref{quesGHIM} for every $r\ge2$.

\begin{theorem}\label{thmcompact}
Let $r\ge2$, and let $F$ be a finite $r$-graph with $e(F)>0$ and no isolated vertices. A possibly infinite family of $F$-coloured finite $r$-graphs is $F$-abundant if and only if it contains an $F$-abundant member. A possibly infinite family of finite $r$-graphs is $F$-abundant if and only if it contains an $F$-abundant member.
\end{theorem}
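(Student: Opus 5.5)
The plan is to prove only the nontrivial direction: a single abundant member clearly makes the family abundant. I would proceed in two stages. First, reduce an abundant family to a finite one: only members whose non-isolated core is small can ever be selected. Second, show that a finite family none of whose members is abundant is itself not abundant. For this I build one host that is bad for all members simultaneously, as a tensor product of the individual bad hosts.

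\emph{Step 1 (bounding the core).} Suppose the family is abundant with constants $c,C$. Isolated vertices of $H$ change $\Emb(H,G)$ only by a factor $(1+o(1))n$ per vertex, so I may pass to the non-isolated core. Next I use a test host built from clusters. Put $s=\eps^{1/(r-1)}n$ and take a disjoint union of $n/s$ complete $F$-partite blow-ups of $F$, each with parts of size about $s/v(F)$. Each vertex then lies in about $\eps n^{r-1}$ edge-disjoint part-respecting copies of $F$, after a standard decomposition of the complete blow-up. Hence this host is uniformly $\Omega(\eps)$-far, and also $\Omega(\eps)$-far in the uncoloured sense.

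In this host a connected $r$-graph on $k$ vertices has at most $(n/s)s^k=\eps^{(k-1)/(r-1)}n^k$ copies. Every component of the core has at least $r$ vertices, so a core with $k$ vertices has at most $\eps^{\Omega(k/r)}n^{k}$ copies; I expect the exponent to come out at least $k/(2r)-1$. Comparing with $c\eps^{C}n^{k}$ as $\eps\to0$ shows that a member with core order larger than $\lfloor 2r(C+1)\rfloor$ is never selected for small $\eps$. So the selected member may be taken from finitely many isomorphism types of cores. Large $\eps$ is handled by enlarging $c$, or by rescaling $\eps$ through taking powers of hosts as in Step 2.

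\emph{Step 2 (coloured case, finite family).} Suppose no member $(H_1,\sigma_1),\dots,(H_m,\sigma_m)$ is abundant. Fix a huge exponent $C'$. Non-abundance of $(H_i,\sigma_i)$ with constants $(1,C')$ gives some $\eps_i$ and arbitrarily large uniformly $\eps_i$-far hosts $G_i$ with $\Emb_{\sigma_i}(H_i,G_i)<\eps_i^{C'}n_i^{v(H_i)}$. Since degenerate homomorphisms number only $O(n_i^{v-1})$, the same bound holds for $\Hom_{\sigma_i}$ up to $1+o(1)$.

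Now define the coloured tensor product. Its vertices of colour $a$ are tuples of colour-$a$ vertices, one from each factor. An $r$-set of tuples is an edge if it has distinct colours forming an edge of $F$ and every coordinate is an edge. Products of the witnessing $F$-copies are again edge-disjoint and part-respecting. Each vertex of the product lies in $\prod_j(\eps_jn_j^{r-1})=(\prod\eps_j)N^{r-1}$ of them, where $N=\prod n_j$. Thus the product is uniformly $(\prod\eps_j)$-far, and moreover
\[\Hom_\sigma(H,G\otimes G')=\Hom_\sigma(H,G)\Hom_\sigma(H,G').\]
To keep the $\eps_j$ comparable, I replace $G_i$ by a tensor power $G_i^{\otimes t_i}$. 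This preserves the ratio $\log(\text{count})/\log\eps$, and I choose $t_i$ so that all $\eps_i^{t_i}$ lie within a factor $\eps_1$ of a common $\delta$. Then $H_i$ has at most $\delta^{C'-O(1)}N^{v(H_i)}\le \delta^{C'/(2m)}\cdot\varepsilon^{C}N^{v(H_i)}$ copies, where $\varepsilon=\prod\eps_j^{t_j}\approx\delta^m$. For $C'\gg mC$ this fails $c\varepsilon^CN^{v}$ for every member at once. The sizes can be made large, contradicting family abundance. Isolated vertices and the finitely many core types from Step 1 fit into this scheme.

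\emph{Step 3 (uncoloured case).} A uniformly $\eps$-far $F$-partite host is $\Omega(\eps)$-far. Also $\Emb(H,G)=\sum_\sigma\Emb_\sigma(H,G)$ over the finitely many $F$-colourings $\sigma$ of $H$. So it suffices to build $F$-partite products as in Step 2. To get the factors, I start from a bad uncoloured $\eps$-far host for $H_i$ and pass, via the analogue of \cite[Lemma~2.4]{GHIM}, to a uniformly $\eps^{O(1)}$-far $F$-partite subgraph. That subgraph is still bad for every colouring of $H_i$. The products then multiply coloured homomorphism counts, each bounded by the small factor times $n^{v}$.

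The main obstacle I expect is exactly this $r$-uniform cleaning lemma: from $\eps n^r$ edge-disjoint copies of $F$, extract an $F$-partite subhost in which \emph{every} vertex has $\eps^{O(1)}n^{r-1}$ copies, with only polynomial loss. I would try random partition into colour classes, followed by iteratively deleting low-degree vertices, which loses only a constant fraction of the copies.
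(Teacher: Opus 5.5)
\textbf{Verdict: there is a genuine gap, in Step~1 and in how it is meant to feed into Step~2.} Your Step~2 is essentially the paper's Lemma~\ref{lemtensor}: a coloured tensor product of bad hosts, with tensor powers used to balance the parameters. Your Step~3 uses the same ingredients as the paper's uncoloured argument (the cleaning Lemma~\ref{lemcanonical} and summing over the finitely many colourings).

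The problem is what Step~1 actually shows. You show that members whose core has many vertices fail in one particular family of test hosts, namely disjoint unions of complete blow-ups of $F$. That only tells you which members can be selected \emph{in those hosts}. It does not show that the subfamily of small-core members is abundant: family abundance requires a witness in every host, and in other hosts a large-core member may be the only one that works. In particular, the tensor product from Step~2 is engineered to be bad only for the finitely many small cores. Nothing prevents it from containing many copies of some large-core member, and then it does not contradict the abundance of the whole family. So the sentence ``the finitely many core types from Step~1 fit into this scheme'' is exactly the unproved step.

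\textbf{The missing idea} is to apply the disjoint-union operation to the bad host itself, not to a fixed auxiliary host.
\begin{itemize}
\item Let $P$ be your uniformly $\lambda$-far product with $\Hom_\sigma(K,P)\le\lambda^L v(P)^{v(K)}$ for every small core $(K,\sigma)$.
\item Let $Q$ be the disjoint union of $m=\lceil\lambda^{-1/(r-1)}\rceil$ copies of $P$. Then $Q$ is uniformly $\eps$-far with $\eps=\lambda/m^{r-1}\asymp\lambda^2$.
\item Each component of a core maps into a single copy of $P$. Hence a small core with $k$ vertices and $a\le k/r$ components satisfies $\Hom_\sigma(K,Q)=m^a\Hom_\sigma(K,P)\le\lambda^L v(Q)^k$.
\item Every core on $k$ vertices satisfies $\Hom(K,Q)\le m^{-(1-1/r)k}v(Q)^k\le\lambda^{k/r}v(Q)^k$ (Lemma~\ref{lemdisjoint}).
\item Take $L>2(C+1)$ and $B=\lfloor 2r(C+1)\rfloor$. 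Then this single host defeats every member.
\end{itemize}
The factor $2$ in $B$ comes precisely from $\eps\asymp\lambda^2$. That is why the exponent $k/r$ you computed in your test host is not the relevant one.

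Alternatively, you can keep your two-stage structure. Apply the family's abundance to the disjoint union of about $\eps^{-1/(r-1)}$ copies of an \emph{arbitrary} uniformly $\eps$-far host $G$. Large cores fail there, so some small core succeeds. Dividing by $m^a$ transfers the bound back to $G$ with exponent about $2C$. This proves the small-core subfamily abundant, and then your Step~2 finishes the argument.

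\textbf{A smaller bookkeeping point in Step~2.} The abundance bound is measured against $v(\text{product})^{v(H)}$, not $N^{v(H)}$. Since $v(\text{product})\le N$, your comparison with $c\varepsilon^C N^{v(H)}$ goes the wrong way. To fix it, note that every part of the product has at least $\varepsilon N$ vertices by Lemma~\ref{lemuniform} applied coordinatewise, so $N\le\varepsilon^{-1}v(\text{product})$. Then take $C'$ large compared with $m\bigl(C+\max_i v(H_i)\bigr)$, not just $mC$. This is how the paper absorbs the same loss via $M\le\lambda_0^{-1}v(P)$.
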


Taking $r=2$ in Theorem~\ref{thmcompact} gives the following graph statement.
\begin{corollary}\label{corgraphcompact}
Let $F$ be a finite simple graph with $e(F)>0$ and no isolated vertices. A possibly infinite family of $F$-coloured finite graphs is $F$-abundant if and only if it contains an $F$-abundant member. A possibly infinite family of finite graphs is $F$-abundant if and only if it contains an $F$-abundant member.
\end{corollary}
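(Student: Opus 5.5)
The plan is to specialise Theorem~\ref{thmcompact} to $r=2$ and check that nothing is lost in the translation, so no new combinatorial argument is needed. A $2$-graph in the sense of this paper is exactly a finite simple graph: every edge is a $2$-element subset of the vertex set, with no loops and no multiple edges. Hence $F$ is a finite $2$-graph with $e(F)>0$ and no isolated vertices, which is precisely the hypothesis of Theorem~\ref{thmcompact} with $r=2$.

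Next I will check, definition by definition, that at $r=2$ the $r$-uniform notions coincide with the graph notions appearing in the corollary.
\begin{enumerate}[(i)]
\item \emph{Distance.} An $n$-vertex graph is $\eps$-far from being $F$-free when at least $\eps n^r=\eps n^2$ edges must be deleted, which is the graph definition.
\item \emph{Colourings.} An $F$-colouring is a graph homomorphism $\kappa_G\colon G\to F$. Consequently $F$-partite graphs, $F$-coloured graphs $(H,\sigma)$, and the counts $\Emb_\sigma(H,G)$ are literally the objects of \cite[Definition~2.2]{GHIM}.
\item \emph{Uniform farness.} At $r=2$ this asks for a collection of edge-disjoint, part-respecting copies of $F$ such that every vertex lies in at least $\eps n^{r-1}=\eps n$ of them. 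This is the notion preceding \cite[Lemma~2.4]{GHIM}.
\item \emph{Abundance of members and families.} The paper's definitions of $F$-abundance for single graphs, single $F$-coloured graphs, and possibly infinite families of either kind are stated uniformly in $r$. Setting $r=2$ therefore yields exactly the notions referred to in Corollary~\ref{corgraphcompact}. The same constants $c,C$ serve the whole family, $n_0(\eps)$ may depend only on $\eps$, and the selected member may depend on $G$ and $\eps$.
\end{enumerate}

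With these identifications, the two equivalences in Theorem~\ref{thmcompact} at $r=2$ are verbatim the two statements of the corollary. The first is the coloured family equivalence; the second is the uncoloured family equivalence.

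The only point requiring care is to keep the corollary's notion of family abundance separate from the GHIM version, which uses $N(H,G)$ with the automorphism normalisation and the constant $c=1$. That comparison is what is needed to answer Question~\ref{quesGHIM}, and it is not part of this corollary. I will therefore state the corollary purely in terms of this paper's definitions. The step is a direct specialisation with no real obstacle.
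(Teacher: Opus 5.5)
Your proposal is correct and matches the paper, which obtains Corollary~\ref{corgraphcompact} simply by taking $r=2$ in Theorem~\ref{thmcompact}. Your definition-by-definition check, together with your remark that the comparison with the normalised counts $N(H,G)$ of Gir\~ao, Hurley, Illingworth and Michel belongs to Corollary~\ref{corGHIM} rather than here, is exactly the right level of care.
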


For an $r$-graph $H$, let $H^\circ$ be obtained from $H$ by deleting all isolated vertices. If $(H,\sigma)$ is $F$-coloured, let $\sigma^\circ$ be the restriction of $\sigma$ to $H^\circ$. The proof of Theorem~\ref{thmcompact} also gives the following bound.
We call $H^\circ$ the non-isolated core of $H$.

\begin{theorem}\label{thmboundedcore}
Let $r\ge2$, and let $F$ be a finite $r$-graph with $e(F)>0$ and no isolated vertices. Suppose that a family is $F$-abundant with constants $c,C>0$. If the family consists of $F$-coloured $r$-graphs, it contains an $F$-abundant member $(H,\sigma)$ satisfying $v(H^\circ)\le\lfloor2r(C+1)\rfloor$. If the family consists of $r$-graphs, it contains an $F$-abundant member $H$ satisfying the same bound.
\end{theorem}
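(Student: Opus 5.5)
The plan is to combine Theorem~\ref{thmcompact} with a counting argument in one explicit sparse, random-like host, in which every member with a large non-isolated core has too few copies. Throughout, isolated vertices of $H$ are harmless: for $k=v(H^\circ)$ one has $\Emb(H,G)=\Emb(H^\circ,G)\,(n-k)^{\underline{v(H)-k}}$, and similarly with colours. So a member is $F$-abundant if and only if its core is, with the same exponent $C$, and only $k$ matters.

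\emph{Step 1: a sparse far host.} For $n$ large, set $\eps=n^{-\beta}$ with $\beta>0$ small. Build $R_n$ as a random collection of edge-disjoint part-respecting copies of $F$ on an $F$-partition into parts of size $n/v(F)$. Do this by a random greedy or nibble packing, so that every vertex lies in about $\eps n^{r-1}$ copies. Then $R_n$ is uniformly $\Omega(\eps)$-far, and it is $\Omega(\eps)$-far in the uncoloured sense, since deleting edges must meet each of the $\Omega(\eps n^r)$ edge-disjoint copies. The key property to verify is codegree control. With high probability, for every $j$-set $S$ with $1\le j\le r-1$, the number of edges of $R_n$ containing $S$ is at most $\eps n^{r-j}\,n^{o(1)}$. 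This holds up to $O(1)$ additive errors that come from the $F$-copies themselves; because each copy contributes only $O(1)$ edges through $S$, these errors can be absorbed.

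\emph{Step 2: counting cores.} Let $T$ be a connected component of $H^\circ$ with $t\ge r$ vertices. Order its edges $e_1,\dots,e_m$ so that each $e_i$ with $i\ge2$ meets $\bigcup_{l<i}e_l$, keeping only edges that add new vertices. Embed greedily: $e_1$ has at most $e(R_n)\le\eps n^r$ choices. An edge adding $s$ new vertices to an already embedded $(r-s)$-set has at most $\eps n^{s}n^{o(1)}$ choices, provided $s\le r-1$. The number of edges used is at least $1+(t-r)/(r-1)\ge t/r$, so $\Emb(T,R_n)\le n^{t+o(1)}\eps^{t/r}$. Multiplying over the components gives $\Emb(H^\circ,R_n)\le n^{k+o(1)}\eps^{k/r}$. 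If $k\ge \lfloor 2r(C+1)\rfloor+1$, then $k/r>2(C+1)>C$. Since $\eps=n^{-\beta}$, this contradicts $\Emb\ge c\,\eps^C n^{k}$ once $n$ is large, because the surplus factor $\eps^{k/r-C}\le n^{-2\beta}$ beats $n^{o(1)}/c$. The slack between $C$ and $2(C+1)$ is there to absorb the loss in $\eps$ in Step~1 (the host is only $\Omega(\eps)$-far) and the $n^{o(1)}$ terms.

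\emph{Step 3: passing from one host to an abundant member.} Step~2 only shows that, on the hosts $R_n$, the witnesses supplied by the family have small cores. To conclude, I would rerun the proof of Theorem~\ref{thmcompact} with the family replaced by its subfamily of members with $v(H^\circ)\le\lfloor2r(C+1)\rfloor$. Modulo isolated vertices, this subfamily has only finitely many cores up to isomorphism, so it is effectively finite. For a finite family, compactness should follow by a disjoint-union or blow-up amalgamation. If each of finitely many members failed on some sequence of hosts, one would glue failing hosts together with a copy of $R_n$. The result is still far, since far-ness is preserved under disjoint union up to constant factors. On the glued host, each small-core member would be rare by assumption and each large-core member rare by Step~2, contradicting abundance of the whole family.

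The main obstacle is making this amalgamation work uniformly. The failing hosts for different members live at different $\eps$ and $n$, and they must be rescaled to a common order. Blow-ups preserve far-ness and normalized embedding densities only approximately, while copies may straddle the components of the union. Even so, I expect this to reduce to standard blow-up estimates. I would handle straddling copies by bounding them by products of the counts of sub-configurations in each piece.
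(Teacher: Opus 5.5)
Your proposal has a genuine gap. The main problem is in Step~3, and there is a secondary problem in Steps~1--2.

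\textbf{Steps 1--2 never connect to the hypothesis.} Your host $R_n$ has farness $\eps=n^{-\beta}\to0$. The definition of abundance only guarantees a witness for $n\ge n_0(\eps)$, and $n_0$ may grow arbitrarily fast as $\eps\to0$. So, as you apply it, the hypothesis yields no witness on $R_n$ at all. The contradiction has to be produced at one fixed $\eps$, depending only on $c,C,r,F$, with hosts of unbounded order.

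The codegree-controlled random packing is also unnecessary. Take $m$ disjoint copies of any uniformly $\lambda$-far host (Lemma~\ref{lemdisjoint}). Each of the $a\le k/r$ components of $H^\circ$ must map into a single copy, so $\Hom(H^\circ,Q)\le m^{a-k}v(Q)^k$, while uniform farness only drops to $\lambda/m^{r-1}$. With $m=\lceil\lambda^{-1/(r-1)}\rceil$ this gives $\lambda^{k/r}$ against $\eps\approx\lambda^2$. That comparison is precisely the source of $\lfloor2r(C+1)\rfloor$.

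\textbf{Step 3 is the serious gap.} It treats the finite case as routine, although it is the core of the proof, and the amalgamation you propose cannot work. In a disjoint union of boundedly many pieces of comparable order, the normalized count of any member is at least a constant times its largest normalized count on a single piece. A member that is rare on its own failing host can be plentiful on another member's failing host, so no member need be rare on the union. Likewise, Step~2 controls large-core members only on the $R_n$ piece, not on the other pieces. (Straddling is not the issue: each component of $H^\circ$ maps into a single piece.)

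The missing idea is the tensor product of Lemma~\ref{lemtensor}:
\begin{itemize}
\item A homomorphism into $\prod_iG_i^{k_i}$ projects to every coordinate, so $(H_j,\sigma_j)$ inherits the bound $\eta_j^{Dk_j}$ from its own coordinates.
\item Coordinatewise products of the edge-disjoint collections give uniform farness of order $\prod_i\eta_i^{k_i}$.
\item Taking $k_i=\lceil T/\log(1/\eta_i)\rceil$ balances the different failure scales, so every small core has at most $\lambda^Lv(P)^{v(H^\circ)}$ homomorphisms simultaneously.
\end{itemize}
Only after this are $m$ disjoint copies taken, to handle the large cores.

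\textbf{Alternative route.} If you use Theorem~\ref{thmcompact} as a black box, what you must show is that the finitely many small cores form an abundant family on \emph{every} far host, not just on $R_n$. This can be done as follows. Given an $\eps$-far $G$, apply the hypothesis to $m(\eps)$ disjoint copies of $G$, with $m(\eps)$ polynomial in $1/\eps$. Exclude large-core witnesses as above. Then transfer the bound back to $G$ via $\Hom(K,Q)=m^a\Hom(K,G)$.
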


The number $v(H)$ cannot be bounded using only $r$ and $C$. Let $E_r$ consist of one edge, let $I_s$ consist of $s$ isolated vertices, and let $H_s=E_r\mathbin{\dot\cup}I_s$. Each $H_s$ is $E_r$-abundant with a value of $C$ independent of $s$, while $v(H_s)$ tends to infinity. Thus the conclusion of Theorem~\ref{thmboundedcore} cannot in general be strengthened by replacing $v(H^\circ)$ with $v(H)$.

Theorem~\ref{thmcompact}, together with a reduction to the non-isolated core of $F$, also shows that if $H$ is $F$-abundant, then one fixed $F$-colouring of $H$ is $F$-abundant, even when $F$ has isolated vertices.

\begin{corollary}\label{corfixedcolour}
Let $r\ge2$, and let $F$ be a finite $r$-graph with $e(F)>0$. An $r$-graph $H$ is $F$-abundant if and only if there is an $F$-colouring $\sigma$ of $H$ such that $(H,\sigma)$ is $F$-abundant.
\end{corollary}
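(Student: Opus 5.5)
We prove Corollary~\ref{corfixedcolour} by reducing to the case where $F$ has no isolated vertices and then applying Theorem~\ref{thmcompact} to the finite family of all $F$-colourings of $H$.

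\emph{The easy direction.} Suppose $(H,\sigma)$ is $F$-abundant. Let $G$ be $\eps$-far from $F$-free on $n$ vertices.
\begin{enumerate}[(i)]
\item Take a maximal collection of edge-disjoint copies of $F$ in $G$. Deleting all their edges destroys every copy, so the collection has at least $\eps n^r/e(F)$ members.
\item Give each vertex of $G$ a uniformly random label in $V(F)$. Keep only the copies $\psi$ with $\kappa\circ\psi=\mathrm{id}$, and only the edges of $G$ in those copies. The result is $F$-partite; in expectation a $v(F)^{-v(F)}$ fraction of the copies survive, so we may fix such a labelling.
\item Clean up in the standard way: repeatedly delete vertices lying in fewer than $\eps' n^{r-1}$ surviving copies, with $\eps'=c_F\eps$. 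Each deletion kills fewer than $\eps'n^{r-1}$ copies, so at most $\eps' n^r$ copies are lost in total. This leaves a uniformly $\eps''$-far $F$-partite subgraph $G'$ on $n'\ge c\eps n$ vertices, since a copy through a surviving vertex uses at most $n^{r-1}$-many others, giving $n'^{\,r}\gtrsim \eps n^r$.
\item There is one wrinkle: isolated vertices of $F$ do not lie in any edge, so their classes carry no degree condition. This is why we first reduce to $F^\circ$.
\end{enumerate}
Then $\Emb(H,G)\ge \Emb_\sigma(H,G')\ge c\eps''^{C}n'^{v(H)}$, which is polynomial in $\eps$ times $n^{v(H)}$.

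\emph{The hard direction, for $F$ without isolated vertices.} Suppose $H$ is $F$-abundant. Consider the finite family $\cH=\{(H,\sigma):\sigma \text{ an } F\text{-colouring of } H\}$. The family is nonempty: applying abundance to a large blow-up of $F$, which is $\eps$-far and $F$-partite, yields an embedding of $H$, and composing with the partition map gives an $F$-colouring.

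We show $\cH$ is $F$-abundant. Take a uniformly $\eps$-far $F$-partite $G$. Its $\ge \eps n^r/(v(F))$ edge-disjoint copies of $F$ make it $\eps/v(F)$-far from $F$-free, so $\Emb(H,G)\ge c\eps^C n^{v(H)}$. Each embedding $\phi$ yields the colouring $\kappa_G\circ\phi$, which is a homomorphism $H\to F$. By pigeonhole over the at most $v(F)^{v(H)}$ colourings, some $\sigma$ has $\Emb_\sigma(H,G)\ge v(F)^{-v(H)}c\eps^Cn^{v(H)}$. Thus $\cH$ is $F$-abundant with uniform constants. Theorem~\ref{thmcompact} now gives an abundant member $(H,\sigma)$.

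\emph{Reduction when $F$ has isolated vertices.} Write $F=F^\circ\dot\cup I_t$.
\begin{enumerate}[(a)]
\item Uncoloured abundance is unchanged: $G$ is $\eps$-far from $F$-free iff it is $\eps$-far from $F^\circ$-free for large $n$, since isolated vertices are free to place.
\item An $F^\circ$-colouring of $H$ is in particular an $F$-colouring.
\item The claim to verify is: if $(H,\sigma)$ is $F^\circ$-abundant, then it is $F$-abundant as an $F$-coloured graph, with colours avoiding the isolated classes. Given a uniformly $\eps$-far $F$-partite $G$, deleting the isolated-vertex classes gives an $F^\circ$-partite graph on $n_1\le n$ vertices. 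Every remaining vertex still lies in $\ge\eps n^{r-1}\ge\eps n_1^{r-1}$ copies, so the graph is uniformly $\eps$-far. Moreover $n_1\ge \Omega(\eps^{1/(r-1)} n)$, since copies need $r$-sets inside these classes.
\end{enumerate}
The count $\Emb_\sigma(H,G)\ge c\eps^C n_1^{v(H)}$ is therefore polynomial in $\eps$ times $n^{v(H)}$.

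The main obstacle is bookkeeping the $n'$ versus $n$ polynomial losses. The substantive content is carried entirely by Theorem~\ref{thmcompact}.
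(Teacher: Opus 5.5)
Your proposal is correct and follows the paper's proof essentially step for step. The easy direction goes through the construction of Lemma~\ref{lemcanonical}. The hard direction pigeonholes over the finite set of $F$-colourings and then invokes Theorem~\ref{thmcompact}. The reduction to $F^\circ$ restricts to the parts indexed by $V(F^\circ)$. Your use of a blow-up of $F$ to show that the set of colourings is nonempty is an equally valid substitute for the paper's polynomial construction.

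One small correction: your remark (iv) is unnecessary, because steps (i)--(iii) go through verbatim when $F$ has isolated vertices, which is exactly how the paper handles the easy direction in that case. Reducing that direction to $F^\circ$ would actually be awkward, since $\sigma$ may send isolated vertices of $H$ to isolated vertices of $F$.
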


\begin{corollary}\label{corGHIM}
Question~\ref{quesGHIM} has an affirmative answer, and Conjecture~\ref{conjGHIM} holds.
\end{corollary}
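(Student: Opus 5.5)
Corollary~\ref{corGHIM} should follow by specialising Theorem~\ref{thmcompact} and Corollary~\ref{corfixedcolour} to $r=2$. The only real work is checking that the definitions of Gir\~ao, Hurley, Illingworth and Michel match ours, and removing the hypothesis that $F$ has no isolated vertices where needed.

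\emph{Conjecture~\ref{conjGHIM}.} This is the case $r=2$ of Corollary~\ref{corfixedcolour}. That corollary allows isolated vertices in $F$. For graphs, our notions of $F$-abundance for $H$ and for $(H,\sigma)$ coincide with \cite[Definitions~2.1 and~2.6]{GHIM}, with copies counted by embeddings, as noted after the definitions. So nothing more is needed.

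\emph{Question~\ref{quesGHIM}.} Here $F=K_3$, which has no isolated vertices, so Corollary~\ref{corgraphcompact} applies. I would translate the GHIM family notions into ours in both directions.

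\textbf{Normalisation.} Since $N(H,G)=\Emb(H,G)/|\operatorname{Aut}(H)|$, the GHIM bound $N(H,G)\ge\eps^C n^{v(H)}$ implies $\Emb(H,G)\ge\eps^C n^{v(H)}$, so we may take $c=1$. Conversely, a bound $\Emb(H,G)\ge c\eps^C n^{v(H)}$ for a single member gives $N(H,G)\ge (c/v(H)!)\,\eps^{C}n^{v(H)}$. For $\eps\le1$ this is at least $\eps^{C'}n^{v(H)}$ for some $C'$, except on a range $\eps>\eps_0$ bounded away from $0$.

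\textbf{Host classes.} A graph with $\eps n^2$ edge-disjoint triangles is $\eps/3$-far from being triangle-free, since each deleted edge destroys at most one of these triangles. Conversely, an $\eps$-far graph contains at least $\eps n^2/3$ edge-disjoint triangles, because a maximal edge-disjoint family covers a hitting set of $3$ times its size. Thus the two host classes agree up to changing $\eps$ by a constant factor, which changes $c$ but not $C$.

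\textbf{The range $\eps>\eps_0$.} There the host has $\Omega(n^2)$ edge-disjoint triangles. To get the GHIM form for a single abundant member, I would use abundance at $\eps_0$: a host with at least $\eps_0 n^2$ edge-disjoint triangles is an admissible host for parameter $\eps_0$, so $\Emb(H,G)\ge c\eps_0^{C}n^{v(H)}$. Choosing $C'$ large enough that $\eps^{C'}\le (c/v(H)!)\eps_0^{C}$ for all $\eps\le1$ in this range then gives $N(H,G)\ge\eps^{C'}n^{v(H)}$.

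So for graphs, a family is GHIM-triangle-abundant if and only if it is $K_3$-abundant in our sense, and the same holds for single graphs. Corollary~\ref{corgraphcompact} then answers the uncoloured part affirmatively. The other direction of the question, that a family containing an abundant member is abundant, is trivial.

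\textbf{Coloured part.} Hosts with $\eps n^2$ edge-disjoint part-respecting triangles differ from uniformly $\eps$-far hosts, which need every vertex in $\eps n$ triangles. I expect this to be the main obstacle. I would cite the reduction \cite[Lemma~2.4]{GHIM}, which passes from many disjoint triangles to a uniformly far induced subgraph on $\Omega(\eps n)$ vertices with polynomially related parameter. Counts of $(H,\sigma)$ in the subgraph lower-bound counts in $G$ with only a polynomial loss, since $v(H)$ is fixed for the chosen member. The converse direction is immediate, because a uniformly $\eps$-far host has at least $\eps n^2/3$ such triangles.

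One point needs care here. The member of the family must not vary in a way that ruins the $v(H)$-dependent losses. I would handle this by applying compactness first, obtaining a single member, and only then converting constants. With the coloured family notions matched, Corollary~\ref{corgraphcompact} gives the coloured answer.
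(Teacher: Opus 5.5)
Your strategy is the paper's. Conjecture~\ref{conjGHIM} is Corollary~\ref{corfixedcolour} with $r=2$. For Question~\ref{quesGHIM} you convert GHIM-abundance of the family into abundance in the paper's sense, apply compactness to get one abundant member, and only then convert that single member back.

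The gap is in that last conversion to the normalised form $N(H,G)\ge\eps^{C'}n^{v(H)}$. Your treatment of the range $\eps>\eps_0$ cannot work, for three reasons:
\begin{itemize}
\item For every $C'$ we have $\eps^{C'}\to1$ as $\eps\to1$. But $(c/v(H)!)\eps_0^{C}$ is a fixed constant below $1$, so no $C'$ gives $\eps^{C'}\le(c/v(H)!)\eps_0^{C}$ on all of $(\eps_0,1]$.
\item The threshold $\eps_0$ was itself defined in terms of $C'$, so the choice is circular.
\item Abundance at the smaller parameter $\eps_0$ gives a weaker lower bound than abundance at $\eps$ itself, so it cannot help.
\end{itemize}
The missing observation is that large $\eps$ is vacuous for the GHIM hosts. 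If $G$ has at least $\eps n^2>0$ pairwise edge-disjoint triangles, then $3\eps n^2\le e(G)<n^2/2$, so $\eps<1/6$. Choose $s\ge0$ with $6^{-s}\le c/|\operatorname{Aut}(H)|$. Then $\frac{c}{|\operatorname{Aut}(H)|}\eps^{C}\ge\eps^{C+s}$ for every admissible $\eps$, which is how the paper absorbs the constant. The same step is needed in the coloured case, where passing to a subgraph costs a further constant factor. Relatedly, the family-level equivalence you assert in the uncoloured part holds only via compactness, since $|\operatorname{Aut}(H)|$ is unbounded over the family. Only the two one-way conversions are actually needed.

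A smaller point concerns the coloured case. There $N_\sigma(H,G)$ is measured against the prescribed $K_3$-partition, so whatever subgraph you pass to must carry the inherited partition. A reduction of the type of Lemma~\ref{lemcanonical}, which chooses a fresh random partition, does not do this. So check that the lemma you cite is of the right kind before using it as a black box.

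In fact nothing external is needed. The given triangles already respect the parts, so repeatedly discard all remaining triangles through a vertex lying in at least one and fewer than $\frac{\eps}{2}n$ of them. At most $n$ vertices are processed, so at least $\frac{\eps}{2}n^2$ triangles survive. The subgraph they span, with the inherited partition, is uniformly $\frac{\eps}{2}$-far and has at least $(\eps/2)^{1/2}n$ vertices. This is the paper's argument, and combined with the $\eps<1/6$ observation it gives the GHIM bound for the selected $(H,\sigma)$.
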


We explain how Corollary~\ref{corGHIM} follows from the preceding statements. If an $n$-vertex graph is $\eps$-far from being triangle-free, a maximal collection of pairwise edge-disjoint triangles has at least $\eps n^2/3$ members, since deleting the edges in the collection destroys every triangle. Conversely, a collection of $\eps n^2$ pairwise edge-disjoint triangles requires at least $\eps n^2$ edge deletions to destroy every triangle. For a graph with a $K_3$-partition, uniform $\eps$-farness gives at least $\eps n^2/3$ edge-disjoint triangles whose vertices lie in the prescribed parts. Conversely, starting with $\eps n^2$ such triangles and repeatedly deleting all remaining triangles through a vertex that belongs to fewer than $\eps n/2$ of them leaves at least $\eps n^2/2$ triangles. The vertices in the remaining triangles span a graph of order $n'\ge(\eps/2)^{1/2}n$ that is uniformly $\eps/2$-far from being triangle-free. Finally, for each fixed graph selected by Theorem~\ref{thmcompact}, the two counts differ by $|\operatorname{Aut}(H)|$, or by $|\operatorname{Aut}(H,\sigma)|$ when a colouring is fixed. These constant factors can be absorbed by changing the exponent of $\eps$.

Theorem~\ref{thmcompact} is not a consequence of applying a finiteness argument to the family. In the definition for a family, the graph that satisfies the required inequality may vary with $G$ and $\eps$, and neither its order nor its isomorphism type is bounded. Generalized Tur\'an problems show that the number of copies of one graph can depend sensitively on the exclusion of another graph \cite{AlonShikhelman16,GishbolinerShapira20}. The theorem shows that the polynomial lower bound nevertheless forces one fixed member of the family to satisfy the required inequality for every sufficiently large graph in the definition.

There have been many related works on the study of asymmetric removal. Csaba proved a single-exponential conditional triangle removal lemma for tripartite graphs \cite[Theorem~4.2]{Csaba24}. It follows that every sufficiently large $n$-vertex graph that is $\eps$-far from being triangle-free contains at least $2^{-\operatorname{poly}(1/\eps)}n^5$ embeddings of $C_5$. Gishboliner, Shapira, and Wigderson proved that, whenever $1\le k<\ell$, every sufficiently large $n$-vertex graph that is $\eps$-far from being $C_{2k+1}$-free contains at least $c_\ell\eps^{4\ell+2}n^{2\ell+1}$ embeddings of $C_{2\ell+1}$ \cite[Theorem~1.3]{GSW}. Gir\~ao, Hurley, Illingworth, and Michel constructed $K_t$-abundant graphs of chromatic number $t$ for every $t\ge4$ \cite[Theorem~1.1]{GHIM} and related the abundance of $F$-coloured graphs to systems of linear equations \cite[Section~4]{GHIM}.

The bound on $v(H^\circ)$ is also useful in other problems. Varnavides-type estimates and arithmetic removal results provide examples for passing from dense sets to many arithmetic configurations \cite{Varnavides59,Green05}. Section~\ref{secapplications} associates a translation-invariant linear system with an $F$-coloured abundant $r$-graph by choosing distinct numbers for the vertices of $F$ and using the corresponding Vandermonde matrices on the edges. It proves that abundance gives a polynomial lower bound for the number of solutions over finite fields and over the integers. If a family is $F$-abundant with exponent $C$, Theorem~\ref{thmboundedcore} gives a system with at most $\binom{\lfloor2r(C+1)\rfloor}{r}$ variables, and an explicit rank condition guarantees that this system contains an equation. For an $F$-abundant family of cycles, it gives a cycle of length at most $\lfloor4(C+1)\rfloor$ whose associated equation has genus at least two and therefore satisfies the square-root bound of Gir\~ao, Hurley, Illingworth, and Michel \cite[Theorems~1.2 and~4.3]{GHIM}.

There are also consequences for property testing and computation. Hypergraph removal is closely related to property testing \cite{RodlSchacht07}. Let $\cP$ be a graph property and suppose that every graph containing a member of an $F$-abundant family $\cW$ lies outside $\cP$. If there are constants $a,b>0$ such that every graph that is $\eps$-far from $\cP$ is $a\eps^b$-far from being $F$-free, then the member supplied by Theorem~\ref{thmcompact} gives tests whose number of sampled vertices is polynomial in $1/\eps$.  On the other hand, there is no algorithm that decides whether a family of graphs enumerated by a Turing machine is $K_3$-abundant.

\paragraph{Proof sketch.} We prove the nontrivial direction of Theorem~\ref{thmcompact} by contraposition. Suppose that no member of the family is $F$-abundant, fix $c,C>0$, and set $A=C+1$ and $B=\lfloor2rA\rfloor$. For an $F$-coloured family, only finitely many $F$-coloured isomorphism types $(H^\circ,\sigma^\circ)$ with $v(H^\circ)\le B$ occur. Taking products of graphs for which these finitely many graphs fail the abundance inequality gives a uniformly $\lambda$-far $F$-partite $r$-graph $P$ such that $\Hom_{\sigma^\circ}(H^\circ,P)\le\lambda^L v(P)^{v(H^\circ)}$ for every one of these types, where $L>2A$. For a family without colours, we apply the same argument to every $F$-colouring of each $H^\circ$ with at most $B$ vertices.

Let $Q$ be the disjoint union of $m=\lceil\lambda^{-1/(r-1)}\rceil$ copies of $P$ and put $\eps=\lambda/m^{r-1}$. Then $Q$ is uniformly $\eps$-far and $2^{-(r-1)}\lambda^2\le\eps\le\lambda^2$. The preceding inequality applies when $v(H^\circ)\le B$. If $k=v(H^\circ)>B$, join two vertices of $H^\circ$ whenever they belong to a common edge. Every connected component of the resulting graph contains at least $r$ vertices, and counting homomorphisms into the $m$ disjoint copies of $P$ gives $\Hom(H^\circ,Q)\le m^{-(1-1/r)k}v(Q)^k\le\lambda^{k/r}v(Q)^k$. The choice of $B$ and a sufficiently small $\lambda$ imply that both bounds are smaller than $c\eps^C$. When the family has no fixed colouring, every embedding into $Q$ determines an $F$-colouring and there are at most $v(F)^B$ colourings when $v(H^\circ)\le B$. After ignoring its partition, $Q$ is $\eps/v(F)$-far from being $F$-free, and choosing $\lambda$ still smaller makes the resulting bounds less than $c(\eps/v(F))^C$. Thus the family is not $F$-abundant. The same calculation gives Theorem~\ref{thmboundedcore}.

For Corollary~\ref{corfixedcolour}, first suppose that $F$ has no isolated vertices and take all $F$-colourings of an $F$-abundant $r$-graph $H$. This set is nonempty because $H$ must occur in every sufficiently large uniformly far $F$-partite graph after ignoring its partition. Every embedding of $H$ into an $F$-partite graph determines one of these finitely many colourings, so the resulting finite family of $F$-coloured graphs is $F$-abundant. Theorem~\ref{thmcompact} supplies one $F$-abundant colouring. The converse follows by applying Lemma~\ref{lemcanonical} to an $\eps$-far graph and then applying the abundance of $(H,\sigma)$ to the resulting $F$-partite subgraph. If $F$ has isolated vertices, delete them to obtain $F^+$. Every copy of $F^+$ in a sufficiently large graph extends to a copy of $F$, while restricting an $F$-partition to the parts indexed by $V(F^+)$ preserves the required collection of copies. This reduces the assertion to the case with no isolated vertices. The estimates following Corollary~\ref{corGHIM} then show that Theorem~\ref{thmcompact} implies Corollary~\ref{corGHIM} with the family definition of Gir\~ao, Hurley, Illingworth, and Michel.

In the remainder of the paper, we fix an integer $r\ge2$ and an $r$-graph $F$ with at least one edge and no isolated vertices, except where isolated vertices are explicitly allowed. Section~\ref{secproducts} establishes consequences of uniform farness, relates $F$-abundance for graphs with and without a fixed $F$-colouring, and proves the estimates for products and disjoint unions used in Theorem~\ref{thmcompact}. Section~\ref{seccompactproof} proves Theorems~\ref{thmcompact} and~\ref{thmboundedcore} and their corollaries. It also treats isolated vertices and proves Corollary~\ref{corGHIM}. Section~\ref{secapplications} associates translation-invariant linear systems and integer matrices with $F$-coloured $r$-graphs and proves polynomial lower bounds for the numbers of solutions. We also give applications to property testing and prove that no algorithm decides whether a family enumerated by a Turing machine is $K_3$-abundant.

\section{Preliminaries}\label{secproducts}

Throughout this section, $r\ge2$ and $F$ is a fixed $r$-graph with at least one edge and no isolated vertices.
For a finite set $X$, we write $|X|$ for its cardinality. For a nonnegative integer $n$, we write $[n]=\{1,\ldots,n\}$, where $[0]=\varnothing$. For a finite set $X$ and a nonnegative integer $k$, the notation $\binom{X}{k}$ denotes the family of all $k$-element subsets of $X$. If $\phi$ is a map whose domain contains $S$, then $\phi(S)=\{\phi(x)\mid x\in S\}$. An $r$-graph $H$ has vertex set $V(H)$ and edge set $E(H)\subseteq\binom{V(H)}{r}$, and we write $v(H)=|V(H)|$ and $e(H)=|E(H)|$. A vertex is isolated if it belongs to no edge. For $U\subseteq V(H)$, the $r$-graph $H[U]$ has vertex set $U$ and edge set $E(H)\cap\binom{U}{r}$. For $D\subseteq E(H)$, the $r$-graph $H-D$ has vertex set $V(H)$ and edge set $E(H)\setminus D$. If $H_1$ and $H_2$ have disjoint vertex sets, then $H_1\mathbin{\dot\cup}H_2$ denotes their disjoint union. For an integer $x$ and a nonnegative integer $s$, we write $(x)_s=\prod_{i=0}^{s-1}(x-i)$, where $(x)_0=1$. For graphs, $K_n$, $C_n$, and $I_n$ denote the complete graph, the cycle, and the edgeless graph on $n$ vertices, and $\overline H$ denotes the complement of $H$.

For a set $X$, the map $\operatorname{id}_X$ is the identity map on $X$. For a prime power $q$, the field with $q$ elements is denoted by $\mathbb F_q$, and $\mathbb F_q[X]$ denotes the ring of polynomials in $X$ with coefficients in $\mathbb F_q$. The ring of integers modulo $n$ is denoted by $\mathbb Z_n$. If $\mathbb K$ is a field and $X$ is finite, then $\mathbb K^X$ denotes the vector space of maps from $X$ to $\mathbb K$. Let $\mathbf 1$ denote the all-one vector whose index set is determined by the context. All logarithms are natural.

The first lemma shows that uniform farness forces every part of an $F$-partite $r$-graph to contain a positive proportion of its vertices. It also shows that the $r$-graph remains far from being $F$-free after the partition is ignored.

\begin{lemma}\label{lemuniform}
Let $G$ be a nonempty uniformly $\eps$-far $F$-partite $r$-graph with parts $(V_a(G))_{a\in V(F)}$. Then $|V_a(G)|\ge\eps v(G)$ for every $a\in V(F)$. After its partition is ignored, $G$ is $\frac{\eps}{v(F)}$-far from being $F$-free.
\end{lemma}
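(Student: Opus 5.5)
Both assertions come from double counting the collection $\cC$ of pairwise edge-disjoint, part-respecting embeddings $\psi\colon F\to G$ given by uniform $\eps$-farness. Write $n=v(G)$.

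\emph{Part sizes.} Fix $a\in V(F)$. Since $F$ has no isolated vertices, pick an edge $e\in E(F)$ with $a\in e$. Each $\psi\in\cC$ satisfies $\kappa_G\circ\psi=\operatorname{id}$, so $\psi(a)\in V_a(G)$, and the edges $\psi(e)$ for $\psi\in\cC$ are pairwise distinct. The vertex $\psi(a)$ is one vertex of $\psi(e)$. The remaining $r-1$ vertices of $\psi(e)$ lie in $V(G)$, so each $x\in V_a(G)$ is $\psi(a)$ for at most $\binom{n}{r-1}\le n^{r-1}$ members $\psi\in\cC$. Hence $|\cC|\le|V_a(G)|\,n^{r-1}$.

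For a lower bound on $|\cC|$, note $G$ is nonempty, so there is some vertex $v$. If $v\in V_b(G)$, then $v$ lies in $\psi(V(F))$ only as $\psi(b)$, because $\psi$ respects colours. So $v$ is $\psi(b)$ for at least $\eps n^{r-1}$ members of $\cC$. Counting pairs $(\psi,\psi(b))$ over all $v\in V_b(G)$ gives $|\cC|\ge|V_b(G)|\,\eps n^{r-1}$. This involves $b$, not $a$, so instead I sum over all vertices of $G$. Each $\psi$ covers exactly $v(F)$ vertices, so
\[
v(F)\,|\cC|=\sum_{v}\#\{\psi\ni v\}\ge n\cdot\eps n^{r-1}.
\]
This gives $|\cC|\ge \eps n^r/v(F)$, hence $|V_a(G)|\ge\eps n/v(F)$. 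That loses a factor of $v(F)$, so I sharpen the argument.

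For the sharper bound, use the vertices of $V_a(G)$ themselves. Every $\psi$ hits $V_a(G)$ in exactly one vertex, so $|\cC|=\sum_{x\in V_a(G)}\#\{\psi:\psi(a)=x\}$. Compare this with the count in $V_b(G)$ for any $b$: the same identity gives $|\cC|\ge|V_b(G)|\,\eps n^{r-1}$ for every $b$. Summing over $b$ gives $v(F)|\cC|\ge \eps n^r$. To get the factor-free bound, combine the two inequalities differently. From the upper bound, $|\cC|\le|V_a(G)|\binom{n-1}{r-1}$. Choosing $b$ with $|V_b(G)|\ge n/v(F)$ does not remove $v(F)$ either.

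The cleanest route is a per-vertex bound. A vertex $x\in V_a(G)$ lies in at least $\eps n^{r-1}$ copies, and all of these copies use distinct edges through $x$ inside $\psi(e)$. The other $r-1$ vertices of $\psi(e)$ lie in the parts $V_{a'}(G)$ for $a'\in e\setminus\{a\}$. Here I would use the parts actually relevant: those $r-1$ vertices are distinct and lie in the prescribed parts, so $\eps n^{r-1}\le\prod_{a'\in e\setminus\{a\}}|V_{a'}(G)|$.

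Apply this with the roles rotated: for an edge $e\ni a$, take any $a'\in e\setminus\{a\}$. Applying the inequality at a vertex of $V_{a'}(G)$ gives $\eps n^{r-1}\le |V_a(G)|\prod_{a''\in e\setminus\{a,a'\}}|V_{a''}(G)|\le|V_a(G)|\,n^{r-2}$, hence $|V_a(G)|\ge\eps n$. Part $V_{a'}(G)$ is nonempty because the nonempty part $V_b(G)$ supplies some $\psi$, and that $\psi$ meets every part. This rotation step is the key point I expect to need care.

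\emph{Farness.} Deleting a set $D$ of edges that makes $G$ $F$-free must meet $\psi(E(F))$ for every $\psi\in\cC$. These edge sets are disjoint, so $|D|\ge|\cC|\ge\eps n^r/v(F)$ by the sum above. Hence $G$ is $\frac{\eps}{v(F)}$-far from being $F$-free.
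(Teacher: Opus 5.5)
Your final argument is correct and is essentially the paper's proof. You pick $a'\in e\setminus\{a\}$, use $\cC\neq\varnothing$ to get $V_{a'}(G)\neq\varnothing$, and bound the at least $\eps n^{r-1}$ distinct edges $\psi(e)$ through a vertex of $V_{a'}(G)$, each meeting $V_a(G)$, to get $\eps n^{r-1}\le|V_a(G)|\,n^{r-2}$; farness follows, as in the paper, from the incidence count $v(F)|\cC|\ge\eps n^r$ and edge-disjointness (the earlier detours giving only $\eps n/v(F)$ can simply be deleted).
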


\begin{proof}
Write $n=v(G)$ and let $\cC$ be a collection from the definition of uniform farness. The family $\cC$ is nonempty. Fix $a\in V(F)$, choose an edge $\eta\in E(F)$ containing $a$, and choose $b\in\eta\setminus\{a\}$. The part $V_b(G)$ contains a vertex $x$ because every member of $\cC$ uses every colour of $F$. At least $\eps n^{r-1}$ members of $\cC$ contain $x$. Their images of $\eta$ are distinct edges because the members of $\cC$ are edge disjoint. At most $|V_a(G)|n^{r-2}$ edges contain $x$ and meet $V_a(G)$, whence $|V_a(G)|\ge\eps n$. Counting incidences between vertices and members of $\cC$ gives $v(F)|\cC|\ge\eps n^r$. Every deletion set that destroys all copies of $F$ meets every member of $\cC$, and the members are edge disjoint. At least $|\cC|\ge\frac{\eps}{v(F)}n^r$ edges must therefore be deleted.
\end{proof}

The next lemma gives a property in the other direction. It finds a large uniformly far $F$-partite subgraph inside every $r$-graph that is far from being $F$-free.

\begin{lemma}\label{lemcanonical}
There is a constant $c_F>0$ with the following property. If an $n$-vertex $r$-graph $G$ is $\eps$-far from being $F$-free, then $G$ has an $F$-partite subgraph $G'$ such that $v(G')\ge c_F\eps^{1/r}n$ and $G'$ is uniformly $c_F\eps$-far from being $F$-free.
\end{lemma}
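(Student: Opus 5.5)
We will start from a maximal collection $\cC_0$ of pairwise edge-disjoint copies of $F$ in $G$. Deleting all edges of these copies destroys every copy of $F$, so $e(F)|\cC_0|\ge\eps n^r$, that is, $|\cC_0|\ge\eps n^r/e(F)$. Next we pass to a single colour pattern. Take a uniformly random map $\kappa\colon V(G)\to V(F)$, and keep a copy $\psi\in\cC_0$, viewed as an embedding $F\to G$, when $\kappa\circ\psi=\operatorname{id}_{V(F)}$. Each copy survives with probability $v(F)^{-v(F)}$. Fix a $\kappa$ for which the surviving collection $\cC_1$ has at least $v(F)^{-v(F)}|\cC_0|$ members. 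Let $G_1$ be the subgraph whose edges are those $e\in E(G)$ with $\kappa(e)\in E(F)$ and $\kappa|_e$ injective. Then $\kappa$ is an $F$-colouring of $G_1$, and every member of $\cC_1$ is a part-respecting embedding into $G_1$. So we now have $\gamma n^r$ edge-disjoint part-respecting copies with $\gamma=\eps/(e(F)v(F)^{v(F)})$.

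The second step is to make the collection uniform by iterated pruning, following the argument sketched after Corollary~\ref{corGHIM}. Set the threshold $\tau=\gamma n^{r-1}/(2v(F))$. While some vertex $x$ lies in fewer than $\tau$ members of the current collection, delete all members containing $x$ and delete $x$ from further consideration. Each vertex is removed at most once, so at most $n\tau=\gamma n^r/(2v(F))\le\gamma n^r/2$ copies are deleted in total. The final collection $\cC$ therefore has at least $\gamma n^r/2$ members, and every vertex of $V'=\bigcup_{\psi\in\cC}\psi(V(F))$ lies in at least $\tau$ members. Let $G'$ be the subgraph of $G_1$ with vertex set $V'$ and edge set $\bigcup_{\psi\in\cC}\psi(E(F))$, with the restricted partition $\kappa|_{V'}$.

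The third step is to bound $n'=v(G')$ from both sides, and this is where care is needed. Each copy has at least one edge, and distinct copies use distinct edges, so $\gamma n^r/2\le|\cC|\le\binom{n'}{r}\le (n')^r$. This gives $n'\ge(\gamma/2)^{1/r}n$. The uniform condition must be measured against $n'$, not $n$. Every vertex of $V'$ lies in at least $\tau=\frac{\gamma}{2v(F)}n^{r-1}\ge\frac{\gamma}{2v(F)}(n')^{r-1}$ members, because $n'\le n$. Hence $G'$ is uniformly $\frac{\gamma}{2v(F)}$-far from being $F$-free. The main point to verify is that the pruning loss is measured in units of $n^{r-1}$, which dominates $(n')^{r-1}$. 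It is therefore harmless that $G'$ may be much smaller than $G$.

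Finally, take $c_F=\min\{(2e(F)v(F)^{v(F)})^{-1/r},\,(2e(F)v(F)^{v(F)+1})^{-1}\}$. The requirement $n'\ge c_F\eps^{1/r}n$ only needs $c_F\le(2e(F)v(F)^{v(F)})^{-1/r}$. For the uniform constant, note that uniform $\eps'$-farness implies uniform $\eps''$-farness for every $\eps''\le\eps'$, so lowering the constant to $c_F\eps$ is allowed. This yields both conclusions of the lemma.
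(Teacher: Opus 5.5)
Your proposal is correct and follows the paper's proof essentially step for step. Both arguments take a maximal edge-disjoint family, use a random map to $V(F)$ that retains a $v(F)^{-v(F)}$ fraction of the copies, prune vertices below a threshold proportional to $\eps n^{r-1}$, and finish with $n^{r-1}\ge v(G')^{r-1}$ together with an injection from copies into edges of $G'$; the only differences are cosmetic, namely your pruning threshold carries an unnecessary extra factor $1/v(F)$ (which merely shrinks $c_F$) and you introduce the intermediate graph $G_1$ explicitly.
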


\begin{proof}
Let $\cM$ be a maximal collection of pairwise edge-disjoint copies of $F$ in $G$. Deleting the edges used by $\cM$ destroys every copy of $F$, so $e(F)|\cM|\ge\eps n^r$. Fix an embedding $\psi\colon F\to G$ for every member of $\cM$, and choose independently and uniformly an element of $V(F)$ for every vertex of $G$. For each fixed $\psi$, the probability that the resulting map $\kappa_G\colon V(G)\to V(F)$ satisfies $\kappa_G\circ\psi=\operatorname{id}_{V(F)}$ is $v(F)^{-v(F)}$. Some choice of $\kappa_G$ therefore retains a subcollection $\cC_1$ with $|\cC_1|\ge\frac{\eps}{e(F)v(F)^{v(F)}}n^r$.

Set $\alpha=\frac{\eps}{2e(F)v(F)^{v(F)}}$. Starting with $\cC_1$, whenever a vertex belongs to at least one and fewer than $\alpha n^{r-1}$ remaining copies, delete all remaining copies through that vertex and never select the vertex again. Fewer than $\alpha n^r$ copies are deleted because there are at most $n$ selections. The final collection $\cC'$ satisfies $|\cC'|\ge\frac{\eps}{2e(F)v(F)^{v(F)}}n^r$. Let $G'$ consist of the vertices and edges used by $\cC'$. Every vertex of $G'$ belongs to at least $\alpha n^{r-1}\ge\alpha v(G')^{r-1}$ members of $\cC'$, so $G'$ is uniformly $\alpha$-far. Fix one edge of $F$. Its images in the members of $\cC'$ are distinct, and hence $v(G')^r\ge|\cC'|$. It follows that $v(G')\ge(2e(F)v(F)^{v(F)})^{-1/r}\eps^{1/r}n$. The assertion follows with $c_F=\min\{(2e(F)v(F)^{v(F)})^{-1},(2e(F)v(F)^{v(F)})^{-1/r}\}$.
\end{proof}

The preceding reduction allows a lower bound counted by $\Emb_\sigma$ to be used without prescribing an $F$-colouring. The following lemma states the resulting implication using the definitions from Section \ref{sec:intro}.

\begin{lemma}\label{lemcolourtouncolour}
If the $F$-coloured $r$-graph $(H,\sigma)$ is $F$-abundant, then the $r$-graph $H$ is $F$-abundant.
\end{lemma}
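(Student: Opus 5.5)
We apply Lemma~\ref{lemcanonical} to pass from an arbitrary $\eps$-far host to a large uniformly far $F$-partite subgraph. We then invoke the abundance of $(H,\sigma)$ inside that subgraph, and finally convert the resulting count back to the scale of the original host. Let $c,C>0$ be the constants witnessing $F$-abundance of $(H,\sigma)$, with threshold function $n_0(\cdot)$, and let $c_F$ be the constant of Lemma~\ref{lemcanonical}.

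Fix $0<\eps\le1$ and let $G$ be an $\eps$-far $n$-vertex $r$-graph. Lemma~\ref{lemcanonical} gives an $F$-partite subgraph $G'$ with $n'=v(G')\ge c_F\eps^{1/r}n$ that is uniformly $\eps'$-far, where $\eps'=c_F\eps\le1$ (we may assume $c_F\le1$, as its definition shows). We take the threshold for $H$ to be $N_0(\eps)=\lceil n_0(c_F\eps)/(c_F\eps^{1/r})\rceil$. Then $n\ge N_0(\eps)$ forces $n'\ge n_0(\eps')$, and the abundance of $(H,\sigma)$ applies to $G'$:
\[
\Emb_\sigma(H,G')\ge c(c_F\eps)^C (n')^{v(H)}.
\]

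Every embedding counted by $\Emb_\sigma(H,G')$ is in particular an injective homomorphism $H\to G'\subseteq G$. Hence $\Emb(H,G)\ge\Emb(H,G')\ge\Emb_\sigma(H,G')$. Substituting the bound on $n'$ gives
\[
\Emb(H,G)\ge c\,c_F^{C+v(H)}\eps^{C+v(H)/r}n^{v(H)}.
\]
This has the required form with constants $c\,c_F^{C+v(H)}$ and $C+v(H)/r$, both of which depend only on $H$, $F$, $c$ and $C$.

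The argument is a direct chaining, and the only step requiring care is bookkeeping the threshold. The subgraph $G'$ has fewer vertices than $G$, so $n_0$ must be rescaled through the lower bound on $v(G')$ as above. The loss of a factor $\eps^{v(H)/r}$ from shrinking the vertex set is harmless, since it is only a polynomial in $\eps$.
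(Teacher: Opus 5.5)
Your proposal is correct and follows the paper's proof essentially verbatim. You apply Lemma~\ref{lemcanonical}, invoke the abundance of $(H,\sigma)$ in $G'$, and arrive at $\Emb(H,G)\ge c\,c_F^{C+v(H)}\eps^{C+v(H)/r}n^{v(H)}$. Your explicit threshold $N_0(\eps)$ and your check that $c_F\le1$ make precise what the paper leaves as ``once $n$ is sufficiently large in terms of $\eps$.''
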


\begin{proof}
Let $h=v(H)$ and let $c,C>0$ be constants for which the definition of $F$-abundance holds for $(H,\sigma)$. Apply Lemma~\ref{lemcanonical} to an $\eps$-far $n$-vertex $r$-graph $G$. Once $n$ is sufficiently large in terms of $\eps$, the subgraph $G'$ supplied by that lemma is large enough to apply the definition of $F$-abundance to $(H,\sigma)$. Every embedding counted by $\Emb_\sigma(H,G')$ is counted by $\Emb(H,G)$, and therefore
\[
\Emb(H,G)\ge c(c_F\eps)^C(c_F\eps^{1/r}n)^h=cc_F^{C+h}\eps^{C+h/r}n^h.
\]
Thus $H$ is $F$-abundant.
\end{proof}

The next lemma permits two changes in the estimates used below: Embeddings may be replaced by homomorphisms, and an $F$-abundance estimate remains valid after isolated vertices are added to the $r$-graph being counted.

\begin{lemma}\label{lembasiccounts}
Let $(H,\sigma)$ be a fixed $F$-coloured $r$-graph. The pair $(H,\sigma)$ is $F$-abundant if and only if there are constants $c,C>0$ such that, for every $0<\eps\le1$, there is $n_0(\eps)$ for which every uniformly $\eps$-far $n$-vertex $F$-partite $r$-graph $G$ with $n\ge n_0(\eps)$ satisfies $\Hom_\sigma(H,G)\ge c\eps^Cn^{v(H)}$. If $(H^\circ,\sigma^\circ)$ is $F$-abundant, then $(H,\sigma)$ is $F$-abundant. The analogous statements hold for an $r$-graph $H$ with $\Emb(H,G)$ and $\Hom(H,G)$.
\end{lemma}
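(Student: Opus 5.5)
The proof splits into two parts: passing between embeddings and homomorphisms, and adding back isolated vertices.

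\emph{Embeddings versus homomorphisms.} Write $h=v(H)$. One direction is immediate, since $\Hom_\sigma(H,G)\ge\Emb_\sigma(H,G)$. For the converse, we bound the non-injective homomorphisms. Every homomorphism that is not injective identifies two vertices of $H$. Hence the number of such maps is at most $\binom{h}{2}n^{h-1}$. So if $\Hom_\sigma(H,G)\ge c\eps^Cn^h$, then
\[
\Emb_\sigma(H,G)\ge c\eps^Cn^h-\tbinom h2 n^{h-1}\ge \tfrac c2\eps^Cn^h
\]
as soon as $n\ge n_0'(\eps):=\max\{n_0(\eps),\,h^2/(c\eps^C)\}$. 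This threshold is allowed to depend on $\eps$. The uncoloured statement is proved by the same computation, with $\Hom$ and $\Emb$ in place of $\Hom_\sigma$ and $\Emb_\sigma$.

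\emph{Adding isolated vertices.} Suppose $(H^\circ,\sigma^\circ)$ is $F$-abundant with constants $c,C$. By the first part, we may work with homomorphisms. Let $I$ be the set of isolated vertices of $H$. A homomorphism $\phi$ counted by $\Hom_\sigma(H,G)$ is exactly a homomorphism of $H^\circ$ respecting $\sigma^\circ$, together with, for each $x\in I$, an arbitrary vertex of the part $V_{\sigma(x)}(G)$. This is because isolated vertices impose no edge constraints, only the colour constraint. Therefore
\[
\Hom_\sigma(H,G)=\Hom_{\sigma^\circ}(H^\circ,G)\prod_{x\in I}|V_{\sigma(x)}(G)|.
\]
By Lemma~\ref{lemuniform}, each factor in the product is at least $\eps n$. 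This gives
\[
\Hom_\sigma(H,G)\ge c\eps^{C+|I|}n^{v(H)},
\]
and the first part then converts this into the required lower bound on $\Emb_\sigma(H,G)$.

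\emph{The uncoloured version.} In the uncoloured case there is no colour constraint on isolated vertices. Hence $\Hom(H,G)=\Hom(H^\circ,G)\,n^{|I|}$, and the claim follows directly.

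The only point needing care is that $n_0$ may depend on $\eps$ when absorbing the non-injective term. The factor $\eps^{|I|}$ in the coloured case is harmless because $|I|$ is fixed once $H$ is fixed.
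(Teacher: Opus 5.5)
Your proof is correct and follows essentially the same route as the paper: the same bound $\binom{h}{2}n^{h-1}$ on non-injective maps, and Lemma~\ref{lemuniform} to give each isolated vertex at least $\eps n$ admissible images. The only difference is that you multiply at the level of homomorphisms, where the count factors exactly, and then convert back to embeddings using the first part. The paper instead extends each embedding of $H^\circ$ directly, using $n\ge 2h/\eps$ to guarantee $(\eps n/2)^s$ injective extensions.
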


\begin{proof}
Write $h=v(H)$. The number of non-injective maps from $V(H)$ to an $n$-element set is at most $\binom{h}{2}n^{h-1}$. A homomorphism lower bound $c\eps^C n^h$ therefore gives an embedding lower bound $\frac{c}{2}\eps^C n^h$ once $n\ge\frac{2\binom{h}{2}}{c\eps^C}$. The reverse implication follows from $\Hom_\sigma(H,G)\ge\Emb_\sigma(H,G)$.

Suppose that $H$ has $s$ isolated vertices and that $(H^\circ,\sigma^\circ)$ satisfies an $F$-abundance estimate $c\eps^C n^{h-s}$. Lemma~\ref{lemuniform} gives at least $\eps n$ vertices in every part. For $n\ge\frac{2h}{\eps}$, every embedding counted by $\Emb_{\sigma^\circ}(H^\circ,G)$ has at least $(\frac{\eps n}{2})^s$ extensions counted by $\Emb_\sigma(H,G)$. Thus $\Emb_\sigma(H,G)\ge c2^{-s}\eps^{C+s}n^h$. For an $r$-graph without a fixed $F$-colouring, each isolated vertex has at least $n-h$ choices. This proves the corresponding statement for $H$.
\end{proof}

The next lemma gives the consequence of the failure of $F$-abundance needed for the product argument. For every prescribed exponent, it produces an arbitrarily small value of $\eta$ for which arbitrarily large uniformly $\eta$-far $F$-partite $r$-graphs contain few homomorphisms from the fixed $F$-coloured $r$-graph.

\begin{lemma}\label{lembad}
Suppose that a fixed $F$-coloured $r$-graph $(H,\sigma)$ is not $F$-abundant. For every $D>0$ and $\eta_0>0$, there is $0<\eta<\eta_0$ such that, for every $N_0$, some uniformly $\eta$-far $F$-partite $r$-graph $G$ satisfies $v(G)\ge N_0$ and $\Hom_\sigma(H,G)<\eta^D v(G)^{v(H)}$.
\end{lemma}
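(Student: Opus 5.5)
The plan is to unwind the negation of $F$-abundance in its homomorphism form, as given by Lemma~\ref{lembasiccounts}, and choose the constants $c$ and $C$ so that the resulting bound has the form $\eta^D$ with $\eta<\eta_0$. The fact that makes this work is monotonicity: if $0<\eta\le\eps$, then a uniformly $\eps$-far $F$-partite $r$-graph is also uniformly $\eta$-far, because the same collection $\cC$ works and $\eps n^{r-1}\ge\eta n^{r-1}$.

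\emph{Step 1 (negating the homomorphism criterion).} Since $(H,\sigma)$ is not $F$-abundant, Lemma~\ref{lembasiccounts} shows that the homomorphism criterion fails. So for every pair $c,C>0$ there is some $0<\eps\le1$ with the following property: for every $N_0$ there is a uniformly $\eps$-far $F$-partite $r$-graph $G$ with $v(G)\ge N_0$ and $\Hom_\sigma(H,G)<c\eps^C v(G)^{v(H)}$. The order of quantifiers matters here. The value $\eps$ is determined by $(c,C)$ alone, before $N_0$ is chosen.

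\emph{Step 2 (choice of constants).} Given $D$ and $\eta_0$, put $\eta_1=\min\{\eta_0/2,1\}$. Apply Step~1 with $C=D$ and $c=\eta_1^D\le1$, and let $\eps$ be the value it supplies.

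\emph{Step 3 (case split).} There are two cases.

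If $\eps<\eta_0$, take $\eta=\eps$. For every $N_0$, the graph $G$ from Step~1 satisfies
\[
\Hom_\sigma(H,G)<c\eps^D v(G)^{v(H)}\le\eta^D v(G)^{v(H)},
\]
using $c\le1$.

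If $\eps\ge\eta_0$, take $\eta=\eta_1<\eta_0$. Then $\eta\le\eps$, so by monotonicity each $G$ from Step~1 is uniformly $\eta$-far. Moreover $\eps\le1$, so
\[
\Hom_\sigma(H,G)<\eta_1^D\eps^D v(G)^{v(H)}\le\eta_1^D v(G)^{v(H)}=\eta^D v(G)^{v(H)}.
\]

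In both cases $\eta$ depends only on $\eps$, $D$ and $\eta_0$, and not on $N_0$. This is exactly the order of quantifiers in the statement.

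The only point requiring care is that order of quantifiers. One must use that the failure of abundance produces a single $\eps$ for which counterexamples exist at every size. It is not enough that each size has a counterexample with its own $\eps$. One must also remember to pass from embeddings to homomorphisms through Lemma~\ref{lembasiccounts}, since the statement bounds $\Hom_\sigma(H,G)$. Beyond these two points, the argument is a direct bookkeeping of constants.
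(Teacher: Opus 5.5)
Your proposal is correct and is essentially the paper's argument. The paper argues by contradiction, deriving the homomorphism criterion of Lemma~\ref{lembasiccounts} with constants $c=\eta_*^D$ and $C=D$ through the same two-case split and monotonicity of uniform farness; this is the contrapositive of your direct negation with $c=\eta_1^D$, $C=D$, and splitting at $\eta_0$ instead of $\eta_*=\min\{\eta_0/2,1\}$ changes nothing.
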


\begin{proof}
Suppose the assertion fails for some $D$ and $\eta_0$. For every $0<\eta<\eta_0$ there is then $N(\eta)$ such that every uniformly $\eta$-far $G$ above this threshold satisfies $\Hom_\sigma(H,G)\ge\eta^D v(G)^{v(H)}$. Set $\eta_*=\min\{\frac{\eta_0}{2},1\}$. If $0<\eps<\eta_*$, use the estimate at $\eta=\eps$. If $\eta_*\le\eps\le1$, a collection that shows uniform $\eps$-farness also shows uniform $\eta_*$-farness, and the estimate at $\eta_*$ gives $\Hom_\sigma(H,G)\ge\eta_*^D\eps^D v(G)^{v(H)}$. Lemma~\ref{lembasiccounts} now implies that $(H,\sigma)$ is $F$-abundant, a contradiction.
\end{proof}

Let $\ell\ge1$, and let $G_1,\ldots,G_\ell$ be $F$-partite $r$-graphs. Their product $P$ has parts $V_a(P)=\prod_{j\in[\ell]}V_a(G_j)$ for $a\in V(F)$. For $\eta\in E(F)$ and vertices $x_a=(x_{a,1},\ldots,x_{a,\ell})\in V_a(P)$ with $a\in\eta$, the set $\{x_a\mid a\in\eta\}$ is an edge of $P$ if and only if $\{x_{a,j}\mid a\in\eta\}$ is an edge of $G_j$ for every $j\in[\ell]$. This defines an $F$-partite $r$-graph.

The following lemma combines the preceding examples for finitely many $F$-coloured $r$-graphs, none of which is $F$-abundant. It produces one uniformly far $F$-partite $r$-graph in which all of them have simultaneously few homomorphisms.

\begin{lemma}\label{lemtensor}
Let $\cS=\{(H_1,\sigma_1),\ldots,(H_s,\sigma_s)\}$ be a finite collection of $F$-coloured $r$-graphs without isolated vertices, none of which is $F$-abundant. For every $L>0$ and $\lambda_*>0$, there is $0<\lambda<\lambda_*$ such that, for every $N_0$, some uniformly $\lambda$-far $F$-partite $r$-graph $P$ satisfies $v(P)\ge N_0$ and $\Hom_{\sigma_i}(H_i,P)\le\lambda^L v(P)^{v(H_i)}$ for every $i\in\{1,\ldots,s\}$.
\end{lemma}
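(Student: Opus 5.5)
We take $P$ to be a product $G_1\times\cdots\times G_s$ of bad examples, one for each $(H_i,\sigma_i)$ supplied by Lemma~\ref{lembad}. The argument uses two facts about products. Homomorphism counts multiply: $\Hom_\sigma(H,G_1\times\cdots\times G_s)=\prod_j\Hom_\sigma(H,G_j)$, since a colour-respecting map into $P$ is just a tuple of colour-respecting maps into the factors, and an edge of $H$ goes to an edge of $P$ iff it does so in every coordinate. Uniform farness also multiplies. If $\cC_j$ witnesses uniform $\eta_j$-farness of $G_j$, then the coordinatewise products $\psi=(\psi_1,\ldots,\psi_s)$ with $\psi_j\in\cC_j$ are embeddings $F\to P$ respecting colours, and they are pairwise edge-disjoint. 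Indeed, an edge of $P$ is determined by its projections, so two product copies sharing the image of an edge $\eta\in E(F)$ share it in every coordinate, and edge-disjointness in each $G_j$ forces $\psi_j=\psi_j'$. Each vertex $x=(x_1,\ldots,x_s)$ lies in at least $\prod_j\eta_jv(G_j)^{r-1}=(\prod_j\eta_j)v(P)^{r-1}$ such copies. Hence $P$ is uniformly $(\prod_j\eta_j)$-far.

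The difficulty is that the factor for $H_i$ gives a small count for $H_i$, but the other factors contribute only the trivial bound $\Hom_{\sigma_i}(H_i,G_j)\le v(G_j)^{v(H_i)}$. Meanwhile the farness parameter is the product $\lambda=\prod_j\eta_j$, so we need $\eta_i^{D}\le\lambda^L$. We handle this by choosing the $\eta_j$ sequentially, each far smaller than a power of the previous ones. Set $D_j$ large and pick, via Lemma~\ref{lembad}, $\eta_1<\lambda_*$. Then pick $\eta_2<\eta_1^{K}$, $\eta_3<\eta_2^K$, and so on, with $K$ large. Then $\lambda=\prod_j\eta_j\ge\eta_s^{2}$, say, once $K\ge 2$, because $\prod_{j<s}\eta_j\ge\eta_s^{1/K+1/K^2+\cdots}$. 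Also $\lambda\le\eta_j$ for every $j$. Hence $\Hom_{\sigma_i}(H_i,P)\le\eta_i^{D_i}v(P)^{v(H_i)}$.

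The remaining problem is that $\eta_i$ may be much larger than $\lambda$ when $i<s$. To fix this, we use the exponent $D$ in Lemma~\ref{lembad} adaptively as well. When choosing $\eta_i$, we do not yet know later $\eta_j$, so instead we reorder the construction. We choose all $\eta_j$ with a common exponent structure by taking $\eta_j$ all comparable: given $\eta_1$ (with $D_1$ huge), we want later ones in $[\eta_1^{K},\eta_1]$. Lemma~\ref{lembad} does not allow prescribing $\eta$ within a window, only $\eta<\eta_0$. This is the main obstacle. My plan is to circumvent it by monotonicity: a uniformly $\eta$-far graph is uniformly $\eta'$-far for all $\eta'\le\eta$. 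So we choose $\eta_s$ first, arbitrarily small, using Lemma~\ref{lembad} with threshold $\lambda_*$ and exponent $D=Ls$. Then we choose $\eta_{s-1},\ldots,\eta_1$ each also below $\lambda_*$, with exponent $Ls$. Next we replace every parameter by $\mu=\min_j\eta_j$, valid by monotonicity. Each $G_i$ is then uniformly $\mu$-far, with $\Hom_{\sigma_i}(H_i,G_i)<\eta_i^{Ls}v(G_i)^{v(H_i)}$. This is the bad direction, since $\eta_i\ge\mu$.

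So instead we order the choices increasing in smallness and feed the current minimum as $\eta_0$. We pick $\eta_1<\lambda_*$, then $\eta_2<\eta_1$, and so on, so that $\mu=\eta_s$. For each $i$ we need $\eta_i^{D}\le\eta_s^{sL}$, which we cannot guarantee for $i<s$. The resolution is to use powers of a single graph instead of fresh parameters. Once $\eta_s$ is fixed, we replace $G_i$ for $i<s$ by its $t_i$-fold product power $G_i^{\times t_i}$. This is uniformly $\eta_i^{t_i}$-far with $\Hom\le\eta_i^{Dt_i}v^{v(H_i)}$, and we choose $t_i$ so that $\eta_i^{t_i}$ is within a factor $\eta_i$ of $\eta_s$. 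Then all factors have farness between $\eta_s^2$ and $\eta_s$, their product is $\lambda\ge\eta_s^{2s}$, and each $H_i$ has count at most $\eta_s^{D}$ relative to $v(P)^{v(H_i)}$. Taking $D=2sL$ gives $\le\lambda^L$. Vertex counts grow under products, so $v(P)\ge N_0$ is automatic once each $G_j$ is large enough, and $\lambda<\lambda_*$ is clear.
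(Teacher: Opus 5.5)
Your final construction is the paper's: a product of powers of the bad examples from Lemma~\ref{lembad}, with the powers chosen so that every block contributes comparable farness. Your condition $\eta_s^2<\eta_i^{t_i}\le\eta_s$ is the paper's $T\le w_i\le 2T$ with $T=\log(1/\eta_s)$. However, the final estimate has a genuine gap in the vertex count.

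For the $F$-partite product, which is the product for which colour-respecting homomorphisms are tuples of colour-respecting homomorphisms, $v(P)=\sum_{a\in V(F)}\prod_j|V_a(G_j)|$, not $\prod_j v(G_j)$. Writing $M=\prod_j v(G_j)$ over all factors, you only have $v(P)\le M$. This is harmless for farness, since each vertex lies in at least $\lambda M^{r-1}\ge\lambda v(P)^{r-1}$ product copies. It breaks your homomorphism estimate, though. The trivial bound in the other coordinates gives $\Hom_{\sigma_i}(H_i,P)\le\eta_s^{D}M^{v(H_i)}$. The ratio $M/v(P)$ equals $v(F)^{t-1}$ for balanced parts, where $t=\sum_it_i$ is the total number of factors, and $t$ is unbounded as $\eta_s\to0$. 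Your comparison $\eta_s^{2sL}\le\lambda^L$ has no slack to absorb this loss, so with $D=2sL$ the bound $\lambda^Lv(P)^{v(H_i)}$ does not follow.

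The missing ingredient is Lemma~\ref{lemuniform}. It gives $|V_a(G_j)|\ge\eta_jv(G_j)$ in every factor, hence $|V_a(P)|\ge\lambda M$ and $M\le\lambda^{-1}v(P)$. Then $\Hom_{\sigma_i}(H_i,P)\le\eta_s^{D}\lambda^{-h}v(P)^{v(H_i)}\le\eta_s^{D-2sh}v(P)^{v(H_i)}$ with $h=\max_iv(H_i)$, so any $D\ge2s(L+h)$ works. This is why the paper takes $D>2s(L+h+1)$ and records $M\le\lambda_0^{-1}v(P)$.

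You also do not treat the case $s=0$. There is then nothing to multiply, and the lemma asserts that arbitrarily large uniformly $\lambda$-far $F$-partite $r$-graphs exist for some $\lambda<\lambda_*$. This needs an explicit construction. The paper uses the copies $\psi_p(a)=(a,p(\rho(a)))$ for polynomials $p$ of degree less than $r$ over $\mathbb F_q$. The case is used later, in Corollary~\ref{corfixedcolour} and in Theorem~\ref{thmcompact} when the collection of small cores is empty. It cannot be bypassed by adding a dummy non-abundant member. For example, every $E_r$-coloured $r$-graph is $E_r$-abundant by partite supersaturation, so such a member need not exist.
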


\begin{proof}
We first handle $s=0$. Choose $0<\lambda<\min\{\lambda_*,v(F)^{-(r-1)}\}$. Let $q>v(F)$ be an arbitrarily large prime, choose distinct $\rho(a)\in\mathbb{F}_q$ for $a\in V(F)$, and take the parts $V_a(P)=\{a\}\times\mathbb{F}_q$. For every polynomial $p\in\mathbb{F}_q[X]$ of degree less than $r$, define $\psi_p\colon V(F)\to V(P)$ by $\psi_p(a)=(a,p(\rho(a)))$, and put $\psi_p(\eta)$ in $E(P)$ for every $\eta\in E(F)$. If $\psi_p(F)$ and $\psi_{p'}(F)$ share an edge, then $p$ and $p'$ agree at the $r$ distinct values indexed by that edge, so $p=p'$. The sets $\psi_p(E(F))$ are therefore pairwise disjoint. Every vertex belongs to exactly $q^{r-1}$ of these copies, while $v(P)=v(F)q$. Thus $P$ is uniformly $v(F)^{-(r-1)}$-far and hence uniformly $\lambda$-far.

Assume that $s\ge1$, write $h_i=v(H_i)$ and $h=\max\{h_i\mid1\le i\le s\}$, and choose $D>2s(L+h+1)$. Lemma~\ref{lembad} gives numbers $0<\eta_i<\frac{1}{2}$ such that arbitrarily large uniformly $\eta_i$-far $G_i$ satisfy $\Hom_{\sigma_i}(H_i,G_i)<\eta_i^D v(G_i)^{h_i}$. Let $\ell_i=\log(1/\eta_i)$. Choose $T$ so large that $T\ge\ell_i$ for every $i$, $sT\ge(r-1)L\log v(F)$, and $e^{-sT}v(F)^{-(r-1)}<\lambda_*$. Define
\[
k_i=\left\lceil\frac{T}{\ell_i}\right\rceil,\qquad w_i=k_i\ell_i,\qquad W=\sum_{i=1}^s w_i,\qquad \lambda_0=e^{-W},\qquad \lambda=\frac{\lambda_0}{v(F)^{r-1}}.
\]
Then $T\le w_i\le2T$, $w_i\ge\frac{W}{2s}$, and $0<\lambda<\lambda_*$. Choose the $G_i$ sufficiently large and take the product with $k_i$ coordinates isomorphic to $G_i$. Write $n_i=v(G_i)$ and $M=\prod_{i=1}^s n_i^{k_i}$. Its part of colour $a$ is $V_a(P)=\prod_{i=1}^s V_a(G_i)^{k_i}$.

Take the coordinatewise products of the edge-disjoint collections that show uniform farness in the factors. Two product copies sharing an edge project in every coordinate to copies sharing an edge. The collections in the factors are edge disjoint, so all coordinate choices agree and the two product copies are equal. Every vertex belongs to at least $\lambda_0M^{r-1}$ product copies. Since $v(P)\le v(F)M$, this is at least $\lambda v(P)^{r-1}$. Lemma~\ref{lemuniform} also gives $|V_a(P)|\ge\lambda_0M$ for every $a$, whence $v(P)\ge v(F)\lambda_0M$ and $M\le\lambda_0^{-1}v(P)$. The factors can be chosen large enough to ensure $v(P)\ge N_0$.

Fix $j\in\{1,\ldots,s\}$. A homomorphism to the product projects to a homomorphism in every coordinate. The $k_j$ coordinates isomorphic to $G_j$ give the required upper bound, while every other coordinate has at most $n_i^{h_j}$ maps. Consequently
\[
\begin{aligned}
\Hom_{\sigma_j}(H_j,P)&\le\eta_j^{Dk_j}M^{h_j}\le\exp(-Dw_j+h_jW)v(P)^{h_j}\\
&\le\exp(-(L+1)W)v(P)^{h_j}\le\lambda^L v(P)^{h_j}.
\end{aligned}
\]
The penultimate inequality follows from $w_j\ge\frac{W}{2s}$ and the choice of $D$. The last follows from $W\ge(r-1)L\log v(F)$. This proves the lemma.
\end{proof}

Two vertices $x$ and $y$ of an $r$-graph $J$ are in the same component if $x=y$ or there are edges $e_1,\ldots,e_d\in E(J)$ such that $x\in e_1$, $y\in e_d$, and $e_i\cap e_{i+1}\ne\varnothing$ for every $i\in[d-1]$. The components of $J$ are the equivalence classes defined by this relation.

The preceding product controls finitely many fixed $r$-graphs, while the family in Theorem~\ref{thmcompact} may contain graphs with arbitrarily many vertices. The next lemma shows that taking many disjoint copies preserves uniform farness with a weaker parameter and bounds the number of homomorphisms from every $r$-graph without isolated vertices in terms of its number of vertices.

\begin{lemma}\label{lemdisjoint}
Let $Q$ be the disjoint union of $m$ copies of a uniformly $\lambda$-far $F$-partite $r$-graph $P$. Then $Q$ is uniformly $\frac{\lambda}{m^{r-1}}$-far. If $K$ has no isolated vertices and $k=v(K)$, then $\Hom(K,Q)\le m^{-(1-1/r)k}v(Q)^k$.
\end{lemma}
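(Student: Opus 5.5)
The proof splits into two parts: first the farness claim for $Q$, then the homomorphism count, which I would organise by components. Write $p=v(P)$, so that $v(Q)=mp$.

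\emph{Farness.} Let $\cC$ witness the uniform $\lambda$-farness of $P$, and take the union $\cC_Q$ of the $m$ translated copies of $\cC$, one in each copy of $P$. The $F$-partition of $Q$ is the disjoint union of the partitions of the copies, and each copy of $P$ is a union of components of $Q$. Hence members of $\cC_Q$ are edge-disjoint, and they respect the colouring. A vertex of $Q$ lies in at least $\lambda p^{r-1}=\frac{\lambda}{m^{r-1}}(mp)^{r-1}$ of them, which is exactly uniform $\frac{\lambda}{m^{r-1}}$-farness.

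\emph{Homomorphism count.} Let $K$ have components $K_1,\ldots,K_t$ with $k_i=v(K_i)$, so that $\sum k_i=k$. Since $K$ has no isolated vertices, each component contains an edge, so $k_i\ge r$. A homomorphism maps each component into a single component of $Q$, and thus into a single copy of $P$. This gives
\[
\Hom(K,Q)=\prod_{i=1}^t\Hom(K_i,Q)\le\prod_{i=1}^t m\,p^{k_i}=m^t p^k.
\]
Since $t\le k/r$, we get $m^tp^k\le m^{k/r}p^k=m^{k/r-k}(mp)^k=m^{-(1-1/r)k}v(Q)^k$, which is the claimed bound.

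The main point needing care is the connectivity fact: the image of a component of $K$ under a homomorphism is connected in the component sense defined above, so it lies in one copy of $P$. This holds because a homomorphism sends edges to edges, and edges of $K$ that intersect are sent to edges of $Q$ that intersect. Therefore a chain of edges in $K$ maps to a chain of edges in $Q$, and no edge of $Q$ meets two copies of $P$.
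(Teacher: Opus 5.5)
Your proposal is correct and follows essentially the same argument as the paper. For farness, you take the union of the $m$ translated witnessing collections. For the count, you send each component (which has at least $r$ vertices) into one of the $m$ copies of $P$, bound the vertex images by $v(P)^k$, and use $a\le k/r$. You also spell out the chain-of-edges argument for why a component's image stays in one copy, which the paper leaves implicit.
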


\begin{proof}
Every vertex lies in one copy of $P$ and belongs there to at least $\lambda v(P)^{r-1}=\frac{\lambda}{m^{r-1}}v(Q)^{r-1}$ copies from a collection showing uniform farness. This proves the first assertion. Let $a$ be the number of components of $K$. Every component contains an edge and hence has at least $r$ vertices, so $a\le\frac{k}{r}$. The image of each component under a homomorphism from $K$ to $Q$ lies in one of the $m$ copies of $P$. Choosing that copy for every component and then choosing the images of the vertices gives
\[
\Hom(K,Q)\le m^av(P)^k=m^{a-k}v(Q)^k\le m^{-(1-1/r)k}v(Q)^k.
\]
\end{proof}

\section{Proof of compactness}\label{seccompactproof}


We first prove Theorem~\ref{thmcompact}. The implication from an $F$-abundant member to an $F$-abundant family follows directly from the definitions, while the converse is proved by contraposition. For arbitrary constants $c,C>0$, we construct arbitrarily large $r$-graphs in which every member of the family fails the lower bound required by $c$ and $C$. We first consider families of $F$-coloured $r$-graphs and then consider families of $r$-graphs.

\begin{proof}[Proof of Theorem~\ref{thmcompact}]
If the family contains an $F$-abundant member, then the constants for that member also satisfy the definition of $F$-abundance for the family. We prove the converse by contraposition. Suppose that no member of a family $\cH$ of $F$-coloured $r$-graphs is $F$-abundant. Fix arbitrary constants $c,C>0$. Let $A=C+1$, $L=2A+1$, and $B=\lfloor2rA\rfloor$. Since $L-2A>0$, $\frac{B+1}{r}-2A>0$, and $A-C>0$, we may choose $0<\lambda_*\le1$ such that
\[
\lambda_*^{L-2A}<2^{-(r-1)A},\qquad \lambda_*^{\frac{B+1}{r}-2A}<2^{-(r-1)A},\qquad \lambda_*^{2(A-C)}<c.
\]

For every $(H,\sigma)\in\cH$, Lemma~\ref{lembasiccounts} shows that $(H^\circ,\sigma^\circ)$ is not $F$-abundant. Moreover, $H^\circ$ is nonempty. To see this, the $F$-coloured $r$-graph on $[0]$ has exactly one embedding into every $F$-partite $r$-graph and is therefore $F$-abundant. Lemma~\ref{lembasiccounts} would then imply that every $F$-coloured $r$-graph consisting only of isolated vertices is $F$-abundant, contrary to the assumption on $\cH$.

For each integer $k\in\{1,\ldots,B\}$, an $r$-graph on $[k]$ has at most $2^{\binom{k}{r}}$ possible edge sets, and there are at most $v(F)^k$ maps from $[k]$ to $V(F)$. After relabelling vertices, the pairs $(H^\circ,\sigma^\circ)$ arising from members of $\cH$ with $v(H^\circ)\le B$ therefore give only finitely many pairs. Apply Lemma~\ref{lemtensor} to one copy of every such pair. It gives $0<\lambda<\lambda_*$ and, above every prescribed order, a uniformly $\lambda$-far $F$-partite $r$-graph $P$ satisfying $\Hom_{\sigma^\circ}(H^\circ,P)\le\lambda^L v(P)^{v(H^\circ)}$ for every $(H,\sigma)\in\cH$ with $v(H^\circ)\le B$.

Let $m=\lceil\lambda^{-\frac{1}{r-1}}\rceil$, let $Q$ be the disjoint union of $m$ copies of $P$, and let $\eps=\frac{\lambda}{m^{r-1}}$. Since $0<\lambda\le1$, we have $\lambda^{-\frac{1}{r-1}}\le m\le2\lambda^{-\frac{1}{r-1}}$. It follows that $2^{-(r-1)}\lambda^2\le\eps\le\lambda^2$. Lemma~\ref{lemdisjoint} shows that $Q$ is uniformly $\eps$-far. Once $\lambda$ is fixed, so is $m$, while $P$ may have arbitrarily large order. Hence $Q$ may also have arbitrarily large order.

Fix $(H,\sigma)\in\cH$, let $K=H^\circ$, let $k=v(K)$, and let $a$ be the number of components of $K$. Suppose first that $k\le B$. Every component of $K$ contains an edge. Under a homomorphism from $K$ to $Q$, the image of this edge lies in one copy of $P$, and the definition of a component forces the image of the whole component to lie in that same copy. Each component may choose any one of the $m$ copies of $P$, independently of the other components. Consequently $\Hom_{\sigma^\circ}(K,Q)=m^a\Hom_{\sigma^\circ}(K,P)$. The isolated vertices of $H$ have at most $v(Q)^{v(H)-k}$ possible images, and $v(Q)=mv(P)$. Therefore
\[
\Emb_\sigma(H,Q)\le v(Q)^{v(H)-k}m^a\Hom_{\sigma^\circ}(K,P)\le m^{a-k}\lambda^L v(Q)^{v(H)}\le\lambda^L v(Q)^{v(H)}.
\]
The last inequality uses $a\le\frac{k}{r}$, which follows because every component of $K$ contains at least $r$ vertices. The lower bound on $\eps$ and the choice of $\lambda_*$ give $\eps^A\ge2^{-(r-1)A}\lambda^{2A}>\lambda^L$. Hence $\Emb_\sigma(H,Q)<\eps^A v(Q)^{v(H)}$.

Suppose next that $k>B$. Restricting an embedding of $H$ to $K$ and then forgetting the $F$-colouring gives
\[
\Emb_\sigma(H,Q)\le v(Q)^{v(H)-k}\Hom(K,Q)\le m^{-(1-\frac{1}{r})k}v(Q)^{v(H)}\le\lambda^{\frac{k}{r}}v(Q)^{v(H)}.
\]
The last inequality follows from $m\ge\lambda^{-\frac{1}{r-1}}$. Since $k\ge B+1$ and $0<\lambda\le1$, we have $\lambda^{\frac{k}{r}}\le\lambda^{\frac{B+1}{r}}$. The lower bound on $\eps$ and the second condition on $\lambda_*$ give $\lambda^{\frac{B+1}{r}}<\eps^A$. Thus $\Emb_\sigma(H,Q)<\eps^A v(Q)^{v(H)}$ also in this case.

We have proved the same inequality for every member of $\cH$. Since $\eps\le\lambda^2$ and $A-C>0$, the third condition on $\lambda_*$ gives $\eps^{A-C}\le\lambda^{2(A-C)}<c$. It follows that $\Emb_\sigma(H,Q)<c\eps^C v(Q)^{v(H)}$ for every $(H,\sigma)\in\cH$. For the fixed value of $\eps$, such an $r$-graph $Q$ exists above every prescribed order. The constants $c$ and $C$ therefore do not satisfy the definition of $F$-abundance for $\cH$. Since they were arbitrary, the family $\cH$ is not $F$-abundant.

We next consider families of $r$-graphs. We apply Lemma~\ref{lemtensor} simultaneously to every $F$-colouring of each graph $H^\circ$ with at most $B$ vertices, and we count separately over the possible $F$-colourings induced by embeddings into the final $F$-partite $r$-graph. As before, a family containing an $F$-abundant member is $F$-abundant.

 We prove the converse by contraposition. Suppose that no member of a family $\cU$ of $r$-graphs is $F$-abundant. Fix arbitrary constants $c,C>0$, and let $A=C+1$, $L=2A+1$, and $B=\lfloor2rA\rfloor$. Choose $0<\lambda_*\le1$ such that
\[
\lambda_*^{L-2A}<2^{-(r-1)A}v(F)^{-B},\qquad \lambda_*^{\frac{B+1}{r}-2A}<2^{-(r-1)A},\qquad \lambda_*^{2(A-C)}<cv(F)^{-C}.
\]

For each $H\in\cU$, let $K=H^\circ$. The $r$-graph $K$ is nonempty because the $r$-graph on $[0]$ is $F$-abundant and Lemma~\ref{lembasiccounts} adds isolated vertices. For every $F$-colouring $\tau$ of $K$, the $F$-coloured $r$-graph $(K,\tau)$ is not $F$-abundant. Otherwise Lemma~\ref{lemcolourtouncolour} would show that the $r$-graph $K$ is $F$-abundant, and Lemma~\ref{lembasiccounts} would then show that $H$ is $F$-abundant.

After relabelling vertices as in the preceding proof, the pairs $(K,\tau)$ with $H\in\cU$ and $v(K)\le B$ form a finite collection. Lemma~\ref{lemtensor} gives $0<\lambda<\lambda_*$ and arbitrarily large uniformly $\lambda$-far $F$-partite $r$-graphs $P$ satisfying $\Hom_\tau(K,P)\le\lambda^L v(P)^{v(K)}$ for every pair in this collection. Let $m=\lceil\lambda^{-\frac{1}{r-1}}\rceil$, let $Q$ be the disjoint union of $m$ copies of $P$, and let $\eps=\frac{\lambda}{m^{r-1}}$. Lemma~\ref{lemdisjoint} shows that $Q$ is uniformly $\eps$-far, and Lemma~\ref{lemuniform} shows that the $r$-graph obtained after its $F$-partition is forgotten is $\delta$-far from being $F$-free, where $\delta=\frac{\eps}{v(F)}$. The definition of $m$ again gives $2^{-(r-1)}\lambda^2\le\eps\le\lambda^2$.

Fix $H\in\cU$, let $K=H^\circ$, let $k=v(K)$, and let $a$ be the number of components of $K$. Suppose first that $k\le B$. Every embedding $\phi$ of $K$ into $Q$ determines the $F$-colouring $\kappa_Q\circ\phi$ of $K$. There are at most $v(F)^k$ such colourings. For each fixed $F$-colouring $\tau$, the component argument from the preceding proof gives $\Hom_\tau(K,Q)=m^a\Hom_\tau(K,P)$. Hence
\[
\Emb(H,Q)\le v(Q)^{v(H)-k}\sum_\tau\Hom_\tau(K,Q)\le v(F)^k m^{a-k}\lambda^L v(Q)^{v(H)}\le v(F)^B\lambda^L v(Q)^{v(H)}.
\]
The sum ranges over the $F$-colourings of $K$. If there is no such colouring, the sum is zero. The lower bound on $\eps$ and the first condition on $\lambda_*$ give $v(F)^B\lambda^L<\eps^A$. Thus $\Emb(H,Q)<\eps^A v(Q)^{v(H)}$.

If $k>B$, restricting every embedding of $H$ to $K$ and allowing arbitrary images for the isolated vertices gives $\Emb(H,Q)\le v(Q)^{v(H)-k}\Hom(K,Q)$. Lemma~\ref{lemdisjoint} and $m\ge\lambda^{-\frac{1}{r-1}}$ therefore give $\Emb(H,Q)\le\lambda^{\frac{k}{r}}v(Q)^{v(H)}\le\lambda^{\frac{B+1}{r}}v(Q)^{v(H)}<\eps^A v(Q)^{v(H)}$. We have now obtained this last bound for every $H\in\cU$.

Since $\eps\le\lambda^2$, the third condition on $\lambda_*$ gives $\eps^{A-C}<cv(F)^{-C}$. Consequently $\eps^A<c\left(\frac{\eps}{v(F)}\right)^C=c\delta^C$. For this fixed $\delta$, the order of $Q$ may be arbitrarily large, and every member $H\in\cU$ satisfies $\Emb(H,Q)<c\delta^C v(Q)^{v(H)}$. Thus $c$ and $C$ do not satisfy the definition of $F$-abundance for $\cU$. Since $c$ and $C$ were arbitrary, $\cU$ is not $F$-abundant.
\end{proof}

Next we show that an $F$-abundant $r$-graph has one fixed $F$-colouring that is $F$-abundant.
 We first handle an $r$-graph $F$ without isolated vertices and then reduce the general case to $F^\circ$.

\begin{proof}[Proof of Corollary~\ref{corfixedcolour}]
Suppose first that $F$ has no isolated vertices. If an $F$-colouring $\sigma$ of $H$ makes $(H,\sigma)$ $F$-abundant, then Lemma~\ref{lemcolourtouncolour} shows that the $r$-graph $H$ is $F$-abundant. We prove the converse. Suppose that $H$ is $F$-abundant with constants $c,C>0$, and let $\Sigma$ be the finite set of all $F$-colourings of $H$.

The set $\Sigma$ is nonempty. To see this, apply the case $s=0$ of Lemma~\ref{lemtensor} with $L=1$ and $\lambda_*=1$. It gives a fixed $\lambda>0$ and arbitrarily large uniformly $\lambda$-far $F$-partite $r$-graphs $P$. If $\Sigma$ were empty, then $P$ would contain no embedding of $H$, because composing such an embedding with $\kappa_P$ would give an element of $\Sigma$. On the other hand, Lemma~\ref{lemuniform} shows that the $r$-graph obtained from $P$ after its partition is forgotten is $\frac{\lambda}{v(F)}$-far from being $F$-free. Taking $P$ sufficiently large and applying the $F$-abundance of $H$ gives an embedding of $H$ into $P$, a contradiction.

Let $G$ be a uniformly $\eps$-far $F$-partite $r$-graph. After its $F$-partition is forgotten, Lemma~\ref{lemuniform} makes $G$ $\frac{\eps}{v(F)}$-far from being $F$-free. For all sufficiently large $v(G)$, the $F$-abundance of $H$ gives $\Emb(H,G)\ge cv(F)^{-C}\eps^C v(G)^{v(H)}$. Every embedding $\phi$ of $H$ into $G$ determines exactly one member $\kappa_G\circ\phi$ of $\Sigma$, and therefore $\Emb(H,G)=\sum_{\sigma\in\Sigma}\Emb_\sigma(H,G)$. Some $\sigma\in\Sigma$ satisfies $\Emb_\sigma(H,G)\ge\frac{c}{|\Sigma|v(F)^C}\eps^C v(G)^{v(H)}$. Hence the finite family $\{(H,\sigma)\mid\sigma\in\Sigma\}$ is $F$-abundant. The assertion of Theorem~\ref{thmcompact} for families of $F$-coloured $r$-graphs supplies one $\sigma\in\Sigma$ for which $(H,\sigma)$ is $F$-abundant.

Now allow $F$ to have isolated vertices. Since $e(F)>0$, the $r$-graph $F^\circ$ is nonempty and has no isolated vertices. If an $r$-graph has at least $v(F)$ vertices, every embedding of $F^\circ$ extends to an embedding of $F$ by choosing distinct images for the isolated vertices. Consequently a set of edges destroys every copy of $F$ if and only if it destroys every copy of $F^\circ$. Thus the definition of $F$-abundance for an $r$-graph is equivalent to the definition of $F^\circ$-abundance after the order threshold is increased if necessary.

Suppose that the $r$-graph $H$ is $F$-abundant. By the case already proved, there is an $F^\circ$-colouring $\sigma$ of $H$ such that $(H,\sigma)$ is $F^\circ$-abundant. Regard $\sigma$ as an $F$-colouring through the inclusion of $F^\circ$ in $F$. Let $c,D>0$ satisfy the definition of $F^\circ$-abundance for $(H,\sigma)$, and let $G$ be a uniformly $\eps$-far $F$-partite $r$-graph with $n=v(G)$. Let $U=\bigcup_{a\in V(F^\circ)}V_a(G)$ and let $G'=G[U]$, with the $F^\circ$-partition inherited from $G$.

Let $\cC$ be a collection from the definition of uniform $\eps$-farness for $G$. Restrict every $\psi\in\cC$ to $F^\circ$. The resulting embeddings have pairwise disjoint edge sets because $E(F^\circ)=E(F)$, and every vertex of $G'$ belongs to at least $\eps n^{r-1}\ge\eps v(G')^{r-1}$ of them. Hence $G'$ is uniformly $\eps$-far from being $F^\circ$-free. Counting incidences between the vertices of $G$ and the members of $\cC$ gives $v(F)|\cC|\ge\eps n^r$. Fix an edge $\eta\in E(F^\circ)$. The images $\psi(\eta)$ are distinct for different $\psi\in\cC$ and are edges of $G'$. It follows that $v(G')^r\ge|\cC|\ge\frac{\eps}{v(F)}n^r$, and therefore $v(G')\ge\left(\frac{\eps}{v(F)}\right)^{\frac{1}{r}}n$.

For all sufficiently large $n$ in terms of $\eps$, the $r$-graph $G'$ is large enough to apply the $F^\circ$-abundance of $(H,\sigma)$. We obtain $\Emb_\sigma(H,G)\ge\Emb_\sigma(H,G')\ge cv(F)^{-\frac{v(H)}{r}}\eps^{D+\frac{v(H)}{r}}n^{v(H)}$. Thus $(H,\sigma)$ is $F$-abundant.

It remains to prove the converse when $F$ has isolated vertices. The proof of Lemma~\ref{lemcanonical} uses the assumption $e(F)>0$ but does not use the absence of isolated vertices. The maximal edge-disjoint collection, the random map to $V(F)$, and the deletion process are unchanged. The lower bound for the number of vertices uses the images of one fixed edge of $F$, which exists because $e(F)>0$. Hence there is a constant $c_F>0$ such that every $\eps$-far $n$-vertex $r$-graph has an $F$-partite subgraph $G'$ satisfying $v(G')\ge c_F\eps^{\frac{1}{r}}n$ and uniformly $c_F\eps$-far from being $F$-free. If $(H,\sigma)$ is $F$-abundant with constants $c,D>0$, applying its lower bound inside this $G'$ gives $\Emb(H,G)\ge cc_F^{D+v(H)}\eps^{D+\frac{v(H)}{r}}n^{v(H)}$ for every sufficiently large $n$. Therefore the $r$-graph $H$ is $F$-abundant.
\end{proof}

It remains to compare the definitions in this paper with the definitions preceding Question~\ref{quesGHIM} in \cite[Question~5.2]{GHIM}. We treat families of graphs and families of $K_3$-coloured graphs separately, and in each case we write both directions of the comparison.

\begin{proof}[Proof of Corollary~\ref{corGHIM}]
Conjecture~\ref{conjGHIM} follows from Corollary~\ref{corfixedcolour} with $r=2$. We prove the two assertions in Question~\ref{quesGHIM}. The copy counts $N(H,G)$ and $N_\sigma(H,G)$ are those defined in the Introduction.

First let $\cA$ be a family of graphs that is triangle-abundant in the sense of Gir\~ao, Hurley, Illingworth, and Michel. Let $G$ be an $n$-vertex graph that is $\eps$-far from being triangle-free, and let $\cM$ be a maximal collection of pairwise edge-disjoint triangles in $G$. Deleting all edges used by $\cM$ makes $G$ triangle-free, since any remaining triangle could be added to $\cM$. Therefore $3|\cM|\ge\eps n^2$. Applying the triangle-abundance of $\cA$ with parameter $\frac{\eps}{3}$ gives some $H\in\cA$ satisfying $N(H,G)\ge\left(\frac{\eps}{3}\right)^C n^{v(H)}$ for a constant $C>0$. Since $\Emb(H,G)=|\operatorname{Aut}(H)|N(H,G)\ge N(H,G)$, the family $\cA$ is $K_3$-abundant under the definition used in Theorem~\ref{thmcompact}. That theorem supplies a graph $H\in\cA$ that is $K_3$-abundant.

Fix constants $c,D>0$ for this graph $H$, and suppose that an $n$-vertex graph $G$ contains at least $\eps n^2$ pairwise edge-disjoint triangles. Every set of edges that makes $G$ triangle-free must meet each of these triangles, so $G$ is $\eps$-far from being triangle-free. For all sufficiently large $n$, we have $N(H,G)\ge\frac{c}{|\operatorname{Aut}(H)|}\eps^D n^{v(H)}$. Whenever such a collection of triangles is nonempty, $3\eps n^2\le e(G)<\frac{n^2}{2}$, so $0<\eps<\frac{1}{6}$. Choose $s\ge0$ such that $6^{-s}\le\frac{c}{|\operatorname{Aut}(H)|}$. Then $\frac{c}{|\operatorname{Aut}(H)|}\eps^D\ge\eps^{D+s}$. Thus the singleton family $\{H\}$ is triangle-abundant in the sense of Gir\~ao, Hurley, Illingworth, and Michel. Conversely, if a family contains a graph whose singleton family is triangle-abundant, then the original family is triangle-abundant by using that graph in every host.

Now let $\cA$ be a family of $K_3$-coloured graphs that is triangle-abundant in the sense of Gir\~ao, Hurley, Illingworth, and Michel. Let $G$ be a uniformly $\eps$-far $K_3$-partite graph, and let $\cC$ be a collection from the definition of uniform farness. Counting incidences between vertices and members of $\cC$ gives $3|\cC|\ge\eps n^2$, where $n=v(G)$. Hence $G$ contains at least $\frac{\eps}{3}n^2$ pairwise edge-disjoint triangles whose three vertices lie in the three prescribed parts. The triangle-abundance of $\cA$ gives some $(H,\sigma)\in\cA$ satisfying $N_\sigma(H,G)\ge\left(\frac{\eps}{3}\right)^C n^{v(H)}$ for a fixed $C>0$. Since $\Emb_\sigma(H,G)=|\operatorname{Aut}(H,\sigma)|N_\sigma(H,G)\ge N_\sigma(H,G)$, the family $\cA$ is $K_3$-abundant under the definition used in Theorem~\ref{thmcompact}. That theorem supplies a $K_3$-coloured graph $(H,\sigma)\in\cA$ that is $K_3$-abundant.

Fix constants $c,D>0$ for $(H,\sigma)$. Suppose that a $K_3$-partite $n$-vertex graph $G$ contains a collection $\cC$ of at least $\eps n^2$ pairwise edge-disjoint triangles whose three vertices lie in the three prescribed parts. Starting with $\cC$, repeatedly choose a vertex that belongs to at least one and fewer than $\frac{\eps}{2}n$ remaining triangles, delete all remaining triangles containing that vertex, and do not choose that vertex again. At most $n$ vertices are chosen, so fewer than $\frac{\eps}{2}n^2$ triangles are deleted. The final collection $\cC'$ has at least $\frac{\eps}{2}n^2$ members. Let $G'$ be the subgraph formed by the vertices and edges used by $\cC'$. Every vertex of $G'$ belongs to at least $\frac{\eps}{2}n\ge\frac{\eps}{2}v(G')$ members of $\cC'$, so $G'$ is uniformly $\frac{\eps}{2}$-far. Fix one edge of $K_3$. Its images in the members of $\cC'$ are distinct, and hence $v(G')^2\ge|\cC'|\ge\frac{\eps}{2}n^2$. Therefore $v(G')\ge\left(\frac{\eps}{2}\right)^{\frac{1}{2}}n$.

For all sufficiently large $n$ in terms of $\eps$, the graph $G'$ is large enough to apply the $K_3$-abundance of $(H,\sigma)$. We obtain $\Emb_\sigma(H,G)\ge c2^{-D-\frac{v(H)}{2}}\eps^{D+\frac{v(H)}{2}}n^{v(H)}$, and hence $N_\sigma(H,G)\ge\frac{c2^{-D-\frac{v(H)}{2}}}{|\operatorname{Aut}(H,\sigma)|}\eps^{D+\frac{v(H)}{2}}n^{v(H)}$. Let $s_\sigma\ge0$ satisfy $6^{-s_\sigma}\le\frac{c2^{-D-\frac{v(H)}{2}}}{|\operatorname{Aut}(H,\sigma)|}$. Since $0<\eps<\frac{1}{6}$ whenever the given collection is nonempty, we have $N_\sigma(H,G)\ge\eps^{D+\frac{v(H)}{2}+s_\sigma}n^{v(H)}$. Thus the singleton family $\{(H,\sigma)\}$ is triangle-abundant in the sense of Gir\~ao, Hurley, Illingworth, and Michel. The reverse implication follows directly by using a triangle-abundant member of the family in every graph in the definition.
\end{proof}

We now prove Theorem~\ref{thmboundedcore}. We first consider families of $F$-coloured $r$-graphs and then families of $r$-graphs. In both cases, the constants $c$ and $C$ are fixed by the $F$-abundance of the family, and we construct a sufficiently large graph that contradicts the lower bound associated with these constants.

\begin{proof}[Proof of Theorem~\ref{thmboundedcore}]
Let $\cH$ be $F$-abundant with constants $c,C>0$, let $A=C+1$, let $L=2A+1$, and let $B=\lfloor2rA\rfloor$. Suppose for a contradiction that no $(H,\sigma)\in\cH$ is both $F$-abundant and satisfies $v(H^\circ)\le B$. Choose $0<\lambda_*\le1$ such that
\[
\lambda_*^{L-2A}<2^{-(r-1)A},\qquad \lambda_*^{\frac{B+1}{r}-2A}<2^{-(r-1)A},\qquad \lambda_*^{2(A-C)}<c.
\]

If $(H,\sigma)\in\cH$ and $v(H^\circ)\le B$, then $(H^\circ,\sigma^\circ)$ is not $F$-abundant. Otherwise Lemma~\ref{lembasiccounts} would show that $(H,\sigma)$ is $F$-abundant, contrary to the assumption. Apply Lemma~\ref{lemtensor} to the finitely many relabelled pairs $(H^\circ,\sigma^\circ)$ that arise in this way. We obtain $0<\lambda<\lambda_*$ and arbitrarily large uniformly $\lambda$-far $F$-partite $r$-graphs $P$ satisfying $\Hom_{\sigma^\circ}(H^\circ,P)\le\lambda^L v(P)^{v(H^\circ)}$ for every such pair.

Let $m=\lceil\lambda^{-\frac{1}{r-1}}\rceil$, let $Q$ be the disjoint union of $m$ copies of $P$, and let $\eps=\frac{\lambda}{m^{r-1}}$. Then $Q$ is uniformly $\eps$-far and $2^{-(r-1)}\lambda^2\le\eps\le\lambda^2$. Choose $P$ sufficiently large that $v(Q)\ge n_0(\eps)$ for the threshold in the $F$-abundance of $\cH$.

Fix $(H,\sigma)\in\cH$, let $K=H^\circ$, let $k=v(K)$, and let $a$ be the number of components of $K$. Every component of $K$ contains at least $r$ vertices, so $a\le\frac{k}{r}$. If $k\le B$, every component has $m$ choices for the copy of $P$ containing its image, and the isolated vertices of $H$ have at most $v(Q)^{v(H)-k}$ possible images. Hence
\[
\Emb_\sigma(H,Q)\le v(Q)^{v(H)-k}m^a\Hom_{\sigma^\circ}(K,P)\le m^{a-k}\lambda^L v(Q)^{v(H)}\le\lambda^L v(Q)^{v(H)}<\eps^A v(Q)^{v(H)}.
\]
If $k>B$, restricting an embedding of $H$ to $K$ and allowing arbitrary images for the isolated vertices gives $\Emb_\sigma(H,Q)\le v(Q)^{v(H)-k}\Hom(K,Q)$. Lemma~\ref{lemdisjoint} and $m\ge\lambda^{-\frac{1}{r-1}}$ give
\[
\Emb_\sigma(H,Q)\le\lambda^{\frac{k}{r}}v(Q)^{v(H)}\le\lambda^{\frac{B+1}{r}}v(Q)^{v(H)}<\eps^A v(Q)^{v(H)}.
\]
The two strict inequalities follow respectively from the first and second conditions on $\lambda_*$ together with $\eps^A\ge2^{-(r-1)A}\lambda^{2A}$. The third condition and $\eps\le\lambda^2$ give $\eps^{A-C}<c$. Hence every $(H,\sigma)\in\cH$ satisfies $\Emb_\sigma(H,Q)<c\eps^C v(Q)^{v(H)}$. This contradicts the $F$-abundance of $\cH$ because $Q$ is uniformly $\eps$-far and $v(Q)\ge n_0(\eps)$. Therefore some $F$-abundant $(H,\sigma)\in\cH$ satisfies $v(H^\circ)\le\lfloor2r(C+1)\rfloor$.

We next consider families of $r$-graphs. We apply Lemma~\ref{lemtensor} to all $F$-colourings of every graph $H^\circ$ with at most $B$ vertices and include the resulting sum over these colourings. Let $\cU$ be $F$-abundant with constants $c,C>0$, let $A=C+1$, let $L=2A+1$, and let $B=\lfloor2rA\rfloor$. Suppose for a contradiction that no $H\in\cU$ is both $F$-abundant and satisfies $v(H^\circ)\le B$. Choose $0<\lambda_*\le1$ such that
\[
\lambda_*^{L-2A}<2^{-(r-1)A}v(F)^{-B},\qquad \lambda_*^{\frac{B+1}{r}-2A}<2^{-(r-1)A},\qquad \lambda_*^{2(A-C)}<cv(F)^{-C}.
\]

If $H\in\cU$ and $v(H^\circ)\le B$, then every $F$-colouring $\tau$ of $H^\circ$ gives an $F$-coloured $r$-graph $(H^\circ,\tau)$ that is not $F$-abundant. Otherwise Lemmas~\ref{lemcolourtouncolour} and~\ref{lembasiccounts} would show that $H$ is $F$-abundant. Apply Lemma~\ref{lemtensor} to all the finitely many relabelled pairs that arise. We obtain $0<\lambda<\lambda_*$ and arbitrarily large uniformly $\lambda$-far $F$-partite $r$-graphs $P$ satisfying $\Hom_\tau(H^\circ,P)\le\lambda^L v(P)^{v(H^\circ)}$ for every such pair.

Let $m=\lceil\lambda^{-\frac{1}{r-1}}\rceil$, let $Q$ be the disjoint union of $m$ copies of $P$, let $\eps=\frac{\lambda}{m^{r-1}}$, and let $\delta=\frac{\eps}{v(F)}$. Lemmas~\ref{lemdisjoint} and~\ref{lemuniform} show that the $r$-graph obtained after the $F$-partition of $Q$ is forgotten is $\delta$-far from being $F$-free. We also have $2^{-(r-1)}\lambda^2\le\eps\le\lambda^2$. Choose $P$ sufficiently large that $v(Q)\ge n_0(\delta)$ for the threshold in the $F$-abundance of $\cU$.

Fix $H\in\cU$, let $K=H^\circ$, let $k=v(K)$, and let $a$ be the number of components of $K$. Every component of $K$ contains at least $r$ vertices, so $a\le\frac{k}{r}$. If $k\le B$, every embedding of $K$ into $Q$ determines one of at most $v(F)^k$ possible $F$-colourings. For each such colouring $\tau$, every component has $m$ choices for the copy of $P$ containing its image, and hence $\Hom_\tau(K,Q)=m^a\Hom_\tau(K,P)$. Allowing arbitrary images for the isolated vertices of $H$ gives
\[
\Emb(H,Q)\le v(F)^k m^{a-k}\lambda^L v(Q)^{v(H)}\le v(F)^B\lambda^L v(Q)^{v(H)}<\eps^A v(Q)^{v(H)}.
\]
If $k>B$, restricting an embedding of $H$ to $K$ gives $\Emb(H,Q)\le v(Q)^{v(H)-k}\Hom(K,Q)$. Lemma~\ref{lemdisjoint} and $m\ge\lambda^{-\frac{1}{r-1}}$ give $\Emb(H,Q)\le\lambda^{\frac{k}{r}}v(Q)^{v(H)}\le\lambda^{\frac{B+1}{r}}v(Q)^{v(H)}<\eps^A v(Q)^{v(H)}$. The first two conditions on $\lambda_*$ justify the two strict inequalities. The third condition and $\eps\le\lambda^2$ give $\eps^{A-C}<cv(F)^{-C}$, and hence $\eps^A<c\delta^C$. Therefore every $H\in\cU$ satisfies $\Emb(H,Q)<c\delta^C v(Q)^{v(H)}$. This contradicts the $F$-abundance of $\cU$. We conclude that some $F$-abundant $H\in\cU$ satisfies $v(H^\circ)\le\lfloor2r(C+1)\rfloor$.
\end{proof}

\begin{remark}\label{remcoreorder}
Theorem~\ref{thmboundedcore} cannot bound $v(H)$. For each nonnegative integer $s$, let $H_s$ be the $r$-graph on $[r+s]$ whose only edge is $[r]$, and let $E_r$ be the $r$-graph consisting of one edge. If an $n$-vertex $r$-graph $G$ is $\eps$-far from being $E_r$-free, then $e(G)\ge\eps n^r$. Each edge of $G$ gives $r!$ possible images of the edge $[r]$, and the $s$ isolated vertices have $(n-r)_s$ possible distinct images outside that edge. Hence $\Emb(H_s,G)=r!e(G)(n-r)_s$. For every fixed $s$ and all sufficiently large $n$, we have $(n-r)_s\ge\frac{1}{2}n^s$, and therefore $\Emb(H_s,G)\ge\frac{1}{2}\eps n^{r+s}$. Thus every $H_s$ is $E_r$-abundant with $c=\frac{1}{2}$ and $C=1$, while $v(H_s)=r+s$ is unbounded and $v(H_s^\circ)=r$.
\end{remark}

\begin{remark}\label{remisolatedF}
The assertion of Theorem~\ref{thmcompact} for families of $r$-graphs remains valid when $F$ has isolated vertices and $e(F)>0$. For every $r$-graph $G$ with at least $v(F)$ vertices, every copy of $F^\circ$ extends to a copy of $F$. Consequently a set of edges destroys every copy of $F$ if and only if it destroys every copy of $F^\circ$. The definitions of $F$-abundance and $F^\circ$-abundance for a family of $r$-graphs are therefore equivalent after their order thresholds are increased if necessary, and the assertion follows by applying Theorem~\ref{thmcompact} to $F^\circ$. The assumption that $F$ has no isolated vertices remains in the assertion for families of $F$-coloured $r$-graphs because Lemma~\ref{lemuniform} gives the required lower bound only for parts indexed by vertices that belong to an edge of $F$.
\end{remark}

\section{Applications}\label{secapplications}

This section has three parts. First, we associate a translation-invariant linear system with an abundant coloured $r$-graph. Secondly, we apply the bounded-core theorem to property testing, obtaining a tester based on one fixed subgraph and a concrete application to planarity. The last subsection discusses several examples showing the limits of these applications.

\subsection{Linear systems arising from abundant uniform hypergraphs}\label{subseclinearapplications}

We give an arithmetic consequence of the hypergraph compactness theorem. The construction is a higher uniformity version of the Ruzsa--Szemer\'edi construction. It associates a fixed translation-invariant linear system with every coloured abundant hypergraph. Theorem~\ref{thmboundedcore} then bounds the number of variables in a system selected from an infinite abundant family.

A homogeneous linear system $Bx=0$ is \emph{translation-invariant} if
$B\mathbf{1}=0$. A solution
$x=(x_1,\ldots,x_m)$ is \emph{proper} if its coordinates are pairwise
distinct.
We use the fixed $r$ and $F$ from the preceding sections and relabel $V(F)=[f]$. Let $(H,\sigma)$ be an $F$-coloured finite $r$-graph with no isolated vertices. We write $h=v(H)$ and $m=e(H)$. Choose pairwise distinct integers $\theta_1,\ldots,\theta_f$ and define $w_i=(1,\theta_i,\theta_i^2,\ldots,\theta_i^{r-1})\in\mathbb{Z}^r$ for $i\in[f]$.

On every edge $e\in E(H)$, the colouring $\sigma$ assigns distinct colours to its vertices. Thus the values $\theta_{\sigma(u)}$, $u\in e$, are distinct, and the corresponding vectors $w_{\sigma(u)}$ form an invertible Vandermonde matrix.
Define the rational linear map $D=D_{H,\sigma,\theta}\colon\mathbb{Q}^{V(H)}\rightarrow\mathbb{Q}^{E(H)}$ by
\[
(Dy)_e=
\sum_{u\in e}
\frac{y_u}{\displaystyle\prod_{v\in e\setminus\{u\}}
(\theta_{\sigma(u)}-\theta_{\sigma(v)})}
\qquad (e\in E(H)).
\]

The coordinate $(Dy)_e$ is the coefficient of $X^{r-1}$ in the unique polynomial of degree at most $r-1$ whose value at $\theta_{\sigma(u)}$ is $y_u$ for every $u\in e$. Let
\(\cS=\im_{\mathbb{Q}}(D),s=\dim_{\mathbb{Q}}(\mathcal{S})\).
Choose a full-row-rank $B\in\mathbb{Z}^{(m-s)\times m}$ such that
\(\ker_{\mathbb{Q}}(B)=\cS\).
 This matrix is obtained by choosing a rational basis for the space of all linear equations that vanish on $\cS$, and then clearing the denominators.
When $s=m$, this is the matrix with zero rows and $m$ columns. 
For a sufficiently large prime $p$, let $B\bmod p$ be the matrix obtained
by reducing every entry of $B$ modulo $p$.
Let $D_p$ be the reduction of $D$ modulo $p$.

\begin{theorem}\label{thmvandermondebridge}
Assume that there are constants $c,C>0$ such that, for every
$0<\eps\le1$, every sufficiently large uniformly $\eps$-far
$F$-partite $r$-graph $G$ satisfies
$\Emb_{\sigma}(H,G)\ge c\eps^C v(G)^h$.
For the matrix $B$ fixed above, there is a constant $c_0>0$ such that,
for every $\delta>0$, the following statements hold.

\begin{enumerate}
\item[(i)] There is $p_0=p_0(\delta)$ such that, for every prime
$p\ge p_0$ and every $A\subseteq\mathbb F_p$ with $|A|\ge\delta p$,
\[
|\ker(B\bmod p)\cap A^m|
\ge c_0\delta^C p^s.
\]

\item[(ii)] There is $N_0=N_0(\delta)$ such that, for every
$N\ge N_0$ and every $A\subseteq[N]$ with $|A|\ge\delta N$,
\[
|\{x\in A^m:Bx=0\text{ over }\mathbb Z\}|
\ge c_0\delta^C N^s.
\]
\end{enumerate}

If $m\ge2$, both statements remain valid when only proper solutions are
counted. Moreover, $B\mathbf 1=0$.
\end{theorem}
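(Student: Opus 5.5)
The plan is a Ruzsa--Szemer\'edi type construction. For a set $A\subseteq\mathbb F_p$ we build a uniformly far $F$-partite host whose $\sigma$-respecting homomorphisms from $H$ correspond to vectors $y\in\mathbb F_p^{V(H)}$ with $D_py\in A^m$. Abundance of $(H,\sigma)$ then lower-bounds the number of such $y$, and the fibres of $y\mapsto D_py$ convert this into a count of solutions of $Bx=0$ inside $A^m$.

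\emph{Step 1: reduction modulo $p$.} Clear the denominators of $D$ to get an integer matrix $D'=\Delta D$. We may take $B$ integral with $BD'=0$. Take $p$ larger than all of the following:
\begin{itemize}
\item $\Delta$ and the differences $\theta_i-\theta_j$;
\item a fixed nonzero $s\times s$ minor of $D'$;
\item a fixed nonzero $(m-s)\times(m-s)$ minor of $B$.
\end{itemize}
Then $\operatorname{rank}D_p=s$, $\operatorname{rank}(B\bmod p)=m-s$ and $(B\bmod p)D_p=0$. Hence $\im D_p=\ker(B\bmod p)$, and every fibre of $D_p$ over its image has exactly $p^{h-s}$ elements. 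I expect this to be the main point requiring care, although it is routine.

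\emph{Step 2: the host graph.} Put $V_a(G)=\{a\}\times\mathbb F_p$ for $a\in[f]$. For each $\eta\in E(F)$ and values $(z_a)_{a\in\eta}$, make $\{(a,z_a)\}_{a\in\eta}$ an edge when the polynomial of degree $<r$ interpolating $(\theta_a,z_a)_{a\in\eta}$ has $X^{r-1}$-coefficient in $A$.
\begin{itemize}
\item For every polynomial $g$ of degree $<r$ with leading coefficient in $A$, the map $\psi_g(a)=(a,g(\theta_a))$ is a part-respecting copy of $F$.
\item Two such copies sharing an edge agree at $r$ distinct points, so the copies are edge disjoint.
\item Each vertex lies in exactly $|A|p^{r-2}\ge\delta p^{r-1}$ copies. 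This is at least $\delta f^{-(r-1)}v(G)^{r-1}$, so $G$ is uniformly $\eps$-far with $\eps=\delta f^{1-r}$.
\end{itemize}
By construction, a map $\phi$ with $\kappa_G\circ\phi=\sigma$ is a homomorphism exactly when $y=(\text{second coordinates of }\phi)$ satisfies $D_py\in A^m$. This uses that $(Dy)_e$ is the leading coefficient of the interpolating polynomial.

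\emph{Step 3: proof of (i).} The hypothesis gives
\[
\#\{y:D_py\in A^m\}\ge\Emb_\sigma(H,G)\ge c\eps^C(fp)^h
\]
for $p\ge p_0(\delta)$. Dividing by the fibre size $p^{h-s}$ from Step 1 gives (i) with $c_0=cf^{h-C(r-1)}$.

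\emph{Step 4: proper solutions, for $m\ge2$.} For $e\ne e'$, pick $u\in e\setminus e'$. The indicator vector of $u$ gives $(Dy)_e\ne0=(Dy)_{e'}$, so the functional $x_e-x_{e'}$ is nonzero on $\cS$, and also on $\ker(B\bmod p)$ for large $p$. Hence at most $\binom m2p^{s-1}$ solutions are improper, which is negligible once $p\gg\delta^{-C}$.

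\emph{Step 5: the identity $B\mathbf 1=0$.} Take $y_u=\theta_{\sigma(u)}^{r-1}$. On every edge this interpolates $X^{r-1}$, so $Dy=\mathbf 1$. Thus $\mathbf 1\in\cS=\ker_{\mathbb Q}B$.

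\emph{Step 6: proof of (ii).} Let $M$ exceed every row $\ell_1$-norm of $B$. By Bertrand's postulate, choose a prime $p\in(MN,2MN]$. Regard $A\subseteq[N]\subseteq\mathbb F_p$, with density at least $\delta/(2M)$. For $x\in A^m$, the congruence $Bx\equiv0\pmod p$ together with $|Bx|_\infty<MN<p$ forces $Bx=0$ over $\mathbb Z$. Apply (i) with $\delta/(2M)$ and use $p^s\ge N^s$. This gives (ii) with $c_0$ replaced by $c_0(2M)^{-C}$, and the proper-solution version carries over in the same way.
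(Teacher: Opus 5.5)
Your proposal is correct and follows the paper's proof essentially step for step. It uses the same interpolation host graph (your edge predicate gives exactly the paper's edges $E_{\eta,t}$ with $t\in\mathbb F_p^{r-1}\times A$), the same fibre count $\Hom_\sigma(H,G_A)=p^{h-s}\,|\ker(B\bmod p)\cap A^m|$, the same distinct-coordinates argument, the same choice $y_u=\theta_{\sigma(u)}^{r-1}$ for $B\mathbf 1=0$, and the same Bertrand-prime transfer for (ii). The only cosmetic difference is that your choice of $M$ lets (ii) go through uniformly even when $B$ has no rows, whereas the paper treats that case separately by first deducing $C\ge m$ from (i).
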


\begin{proof}
We first prove~(i). Fix $\delta>0$. Since we have fixed $D$, $B$, and the integers
$\theta_1,\ldots,\theta_f$ before, only finitely many primes can make a
denominator in $D$ vanish, a relevant Vandermonde determinant vanish, or the
rank of $D$ or $B$ decrease after reduction modulo $p$. 
Thus we take $p$
sufficiently large so that none of these cases occurs.

Fix $A\subseteq\mathbb F_p$ with $|A|\ge\delta p$, and write
$\delta_A=|A|/p$. Let $T_A=\mathbb F_p^{r-1}\times A$. We denote elements of
$T_A$ by $t=(z_0,z_1,\ldots,z_{r-2},a)$, where $a\in A$.
We construct an $F$-partite $r$-graph $G_A$. Its part indexed by
$i\in V(F)$ is $V_i=\{i\}\times\mathbb F_p$. For every $\eta\in E(F)$ and
$t\in T_A$, let
$
E_{\eta,t}=\{(i,w_i\cdot t):i\in\eta\}
$
be an edge of $G_A$.

The hypergraph $G_A$ is simple. Suppose that
$E_{\eta,t}=E_{\eta',t'}$. The part indices imply that
$\eta=\eta'$. Let $W_\eta$ be the matrix whose rows are the vectors
$w_i$ with $i\in\eta$. Equality of the second coordinates implies
$W_\eta t=W_\eta t'$. Moreover,
$
\det(W_\eta)
=
\pm\prod_{\substack{i,j\in\eta\\i<j}}
(\theta_j-\theta_i)
\ne0
\pmod p.
$
Thus $W_\eta$ is invertible over $\mathbb F_p$, and hence $t=t'$.
It follows that the map $(\eta,t)\mapsto E_{\eta,t}$ is injective.

Every $t\in T_A$ defines a copy $F_t$ of $F$ in $G_A$. The vertex
$i\in V(F)$ is mapped to $(i,w_i\cdot t)\in V_i$. Suppose that $F_t$ and $F_{t'}$ share an edge.
Then $E_{\eta,t}=E_{\eta,t'}$ for some $\eta\in E(F)$. By the preceding
argument, $t=t'$. Thus the copies $F_t$ are pairwise edge-disjoint.
For a fixed vertex $(i,y)\in V_i$, it belongs to $F_t$ exactly when
$\sum_{j=0}^{r-2}\theta_i^j z_j+\theta_i^{r-1}a=y$.
For every $a\in A$ and every choice of
$z_1,\ldots,z_{r-2}$, this equation uniquely determines $z_0$.
When $r=2$,
there are no freely chosen $z_j$. Hence every vertex of $G_A$ belongs to
exactly
$
|A|p^{r-2}=\delta_Ap^{r-1}
$
members of the edge-disjoint family $\{F_t:t\in T_A\}$.

Since $v(G_A)=fp$, this number is
$
\frac{\delta_A}{f^{r-1}}v(G_A)^{r-1}.
$
Thus $G_A$ is uniformly $\delta_A/f^{r-1}$-far from being $F$-free. Since
$\delta_A\ge\delta$, it is also uniformly
$\delta/f^{r-1}$-far. We take $p$ large enough that $v(G_A)=fp$ is above
the abundance threshold for this fixed value of the distance parameter.

We now count colour-preserving homomorphisms from $H$ to $G_A$.  Every vector
$y=(y_u)_{u\in V(H)}\in\mathbb F_p^{V(H)}$ defines a colour-preserving
vertex map
$
\psi_y(u)=(\sigma(u),y_u).
$

Fix an edge $e\in E(H)$. The matrix whose rows are
$w_{\sigma(u)}$, with $u\in e$, is invertible. Hence there is a unique
$t_e\in\mathbb F_p^r$ such that
$
y_u=w_{\sigma(u)}\cdot t_e
,(u\in e).
$
Equivalently, $t_e$ is the coefficient vector of the unique polynomial
$P_e$ of degree at most $r-1$ satisfying
$P_e(\theta_{\sigma(u)})=y_u$ for every $u\in e$. The Lagrange formula for
the leading coefficient of $P_e$ is the defining formula for
$(D_py)_e$. Thus the last coordinate of $t_e$ is $(D_py)_e$.
It follows that the image of $e$ under $\psi_y$ is an edge of $G_A$ exactly
when $(D_py)_e\in A$. Therefore,
\[
\psi_y\text{ is a colour-preserving homomorphism from $H$ to $G_A$}
\quad\Longleftrightarrow\quad
D_py\in A^m.
\]

Let $\cS_p=\im(D_p)$. By the choice of $p$, we have $\rank(D_p)=s$. Every
element of $\cS_p$ therefore has exactly $p^{h-s}$ preimages under $D_p$.
Hence
$
\Hom_{\sigma}(H,G_A)
=
p^{h-s}|\cS_p\cap A^m|.
$
The abundance hypothesis concerns embeddings. Since every embedding is a
homomorphism, we have
$\Hom_{\sigma}(H,G_A)\ge\Emb_{\sigma}(H,G_A)$.

By the abundance hypothesis with distance parameter
$\delta/f^{r-1}$, we obtain
$
|\cS_p\cap A^m|
\ge
c f^{h-(r-1)C}\delta^C p^s.
$
The reductions of $B$ and $D$ have the same ranks as the original matrices.
Moreover, $BD=0$, so
$\im(D_p)\subseteq\ker(B\bmod p)$. Both spaces have dimension $s$. It follows
that
$
\ker(B\bmod p)=\cS_p.
$
After decreasing the fixed positive constant, this proves the first lower
bound in~(i).

We next consider proper solutions. Let $e,e'\in E(H)$ be distinct. Since
$H$ is simple, some vertex lies in one of these edges and not in the other.
By the definition of $D$, it follows that the $e$-th and $e'$-th coordinates of
$Dy$ are different linear functions of $y$. Hence the linear map
$x\mapsto x_e-x_{e'}$ is nonzero on $\cS$.

By increasing the lower bound on $p$, this map remains nonzero on $\cS_p$.
Its kernel in $\cS_p$ has $p^{s-1}$ elements. Therefore the number of vectors
in $\cS_p$ with two equal coordinates is at most
$\binom{m}{2}p^{s-1}$. For fixed $\delta$, this is at most one half of the
preceding lower bound when $p$ is sufficiently large. After decreasing the
constant once more, the same bound holds when only proper solutions are
counted. This proves~(i).

We now prove~(ii). We first suppose that $B$ has no rows. Then $s=m$, and all
the vectors in $A^m$ are solutions. Note that $C\ge m$; indeed, fix
$0<\gamma\le1$ and choose sets $A_p\subseteq\mathbb F_p$ with
$|A_p|=\lceil\gamma p\rceil$. By~(i),
$
|A_p|^m\ge c_0\gamma^Cp^m.
$
After division by $p^m$ and letting $p$ tend to infinity, we obtain
$\gamma^m\ge c_0\gamma^C$. Since this holds for every
$0<\gamma\le1$, it follows that $C\ge m$.

Now let $A\subseteq[N]$ with $|A|\ge\delta N$. Since $C\ge m$ and
$0<\delta\le1$, we have
$
|A|^m\ge\delta^mN^m\ge\delta^CN^m.
$
When $m\ge2$, at most $\binom{m}{2}|A|^{m-1}$ tuples have two equal
coordinates. For sufficiently large $N$, at least one half of the tuples in
$A^m$ are proper. Thus~(ii) also holds for proper solutions after decreasing
$c_0$.

We may now assume that $B$ has at least one row. Let
$
L=
\max_{1\le j\le m-s}
\sum_{e=1}^m|B_{j,e}|,
$
and choose an integer $K>2L+2$. By Bertrand's postulate, for every
sufficiently large $N$ there is a prime $p$ such that
$KN<p<2KN$.

For sufficiently large $N$, this prime is large enough for~(i) with density
parameter $\delta/(2K)$. Regard $A\subseteq[N]$ as a subset of
$\mathbb F_p$. Since $p<2KN$, its density in $\mathbb F_p$ is greater than
$\delta/(2K)$. By~(i),
\[
|\{x\in A^m:Bx=0\pmod p\}|
\ge
c_0\left(\frac{\delta}{2K}\right)^Cp^s.
\]

For every $x\in[N]^m$ and every row $B_j$ of $B$, we have
$|B_jx|\le LN<p$. Hence
\[
B_jx=0\pmod p
\quad\Longleftrightarrow\quad
B_jx=0\text{ over }\mathbb Z.
\]
Since $p>KN$, the preceding lower bound is at least
$c_1\delta^CN^s$ for a fixed constant $c_1>0$. After decreasing $c_0$, this
proves the integer lower bound in~(ii).
When $m\ge2$, we use the proper form of~(i). Since $p>N$, distinct elements
of $[N]$ remain distinct modulo $p$. Thus the same argument proves~(ii) for
proper solutions.

It remains to prove that the system is translation-invariant. Set
$y_u=\theta_{\sigma(u)}^{r-1}$ for every $u\in V(H)$. On every edge, these
values are evaluations of the polynomial $X^{r-1}$. Its leading coefficient
is one. By the definition of $D$, we have $Dy=\mathbf 1$. Thus
$\mathbf 1\in\cS=\ker_{\mathbb Q}(B)$, and hence $B\mathbf 1=0$.
\end{proof}

The system in Theorem~\ref{thmvandermondebridge} need not contain a nonzero
equation. The following condition ensures that $B$ has a nonzero row.

\begin{proposition}\label{propnonemptyvandermondesystem}
Let $q$ be the number of components of $H$.
Then
$\rank(B)=\codim(\cS)\ge m-h+q(r-1)$.
In particular, $B$ has a nonzero row whenever
$m\ge h-q(r-1)+1$.
\end{proposition}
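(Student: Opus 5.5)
Since $B$ has full row rank and $\ker_{\mathbb Q}(B)=\cS$, we have $\rank(B)=m-s=\codim(\cS)$, so the plan is to show $s=\rank(D)\le h-q(r-1)$. By rank–nullity for $D\colon\mathbb Q^{V(H)}\to\mathbb Q^{E(H)}$, this amounts to proving $\dim\ker D\ge q(r-1)$. The idea is to exhibit, on each component of $H$, an $(r-1)$-dimensional space of vectors $y$ with $Dy=0$. These vectors should be supported on that component, so that the spaces for distinct components are independent.

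Fix a component $J$ of $H$ and a polynomial $g\in\mathbb Q[X]$ of degree at most $r-2$. Define $y^{(J,g)}$ by $y_u=g(\theta_{\sigma(u)})$ for $u\in V(J)$ and $y_u=0$ otherwise. Take any edge $e$. It lies entirely inside one component, by the definition of components. If $e\subseteq V(J)$, the values $y_u$ with $u\in e$ are evaluations of $g$ at the distinct nodes $\theta_{\sigma(u)}$. The unique interpolating polynomial of degree at most $r-1$ is therefore $g$ itself, and its coefficient of $X^{r-1}$ vanishes, so $(Dy)_e=0$. If $e$ lies in another component, all $y_u$ with $u\in e$ are zero, and again $(Dy)_e=0$. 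Hence each $y^{(J,g)}$ lies in $\ker D$. This uses the interpretation of $(Dy)_e$ as a leading coefficient, stated just before Theorem~\ref{thmvandermondebridge}.

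The main point is linear independence. The map $(g_J)_J\mapsto\sum_J y^{(J,g_J)}$ is linear from $\bigoplus_J\mathbb Q[X]_{\le r-2}$, which has dimension $q(r-1)$, into $\ker D$, and we need it to be injective. Because the supports $V(J)$ are disjoint, it suffices to check each component separately. Suppose $g(\theta_{\sigma(u)})=0$ for all $u\in V(J)$. Every component $J$ contains an edge, since $H$ has no isolated vertices in this section, and that edge carries $r$ distinct colours. So $g$ vanishes at $r$ distinct nodes, and since $\deg g\le r-2$, this forces $g=0$.

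It follows that $\dim\ker D\ge q(r-1)$, so $s\le h-q(r-1)$ and $\rank(B)=m-s\ge m-h+q(r-1)$. When $m\ge h-q(r-1)+1$, this gives $\rank(B)\ge1$, so $B$ has a nonzero row. The only delicate points are the component-disjointness of edges and the existence of an edge in each component; neither should cause difficulty.
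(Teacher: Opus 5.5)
Your proposal is correct and matches the paper's proof: for each component $J$ you map polynomials $g$ of degree at most $r-2$ into $\ker D$ via $y_u=g(\theta_{\sigma(u)})$ on $V(J)$, get injectivity from an edge of $J$ carrying $r$ distinct colours, and get independence across components from disjoint supports. Rank--nullity then gives $s\le h-q(r-1)$, and full row rank gives $\rank(B)=m-s$, exactly as in the paper.
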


\begin{proof}
Fix a component $J$ of $H$ and a polynomial
$P$ with rational coefficients and degree at most $r-2$. Define $y_u=P(\theta_{\sigma(u)})$ for $u\in J$ and $y_u=0$ outside $J$. On every edge contained in $J$, the interpolation polynomial has degree at
most $r-2$. On every other edge, all the corresponding values are zero.
Thus the coefficient of $X^{r-1}$ is zero on every edge, and hence
$y\in\ker(D)$. The map from $P$ to $y$ is injective. Every component contains an edge, and a polynomial of degree at most $r-2$ which vanishes at the $r$ distinct values associated with that edge is the zero polynomial. The subspaces obtained from different components have disjoint supports. It follows that $\dim(\ker(D))\ge q(r-1)$. Therefore $s=\rank(D)\le h-q(r-1)$, and the stated codimension bound follows from $\rank(B)=m-s$.
\end{proof}

We next combine the bridge with Theorem~\ref{thmboundedcore}.

\begin{corollary}\label{corboundedarithmeticwitness}
Suppose that a coloured or uncoloured family is $F$-abundant
with constants $c,C>0$. 
Then we can choose $H_0$ from the uncoloured family,
or $(H_0,\sigma)$ from the coloured family.
We can also choose an $F$-colouring $\sigma_0$ of $H_0^\circ$. In the
coloured case, let
$\sigma_0=\sigma|_{V(H_0^\circ)}$. In the uncoloured case, choose
$\sigma_0$ by Corollary~\ref{corfixedcolour}. These choices have the following
properties.

\begin{enumerate}
\item[(i)] We have
$v(H_0^\circ)\le\lfloor2r(C+1)\rfloor$, and
$(H_0^\circ,\sigma_0)$ is $F$-abundant.

\item[(ii)] The linear system associated with
$(H_0^\circ,\sigma_0)$ is translation-invariant. The finite-field and
integer lower bounds in Theorem~\ref{thmvandermondebridge} hold for this
system. It has at most
$\binom{\lfloor2r(C+1)\rfloor}{r}$
variables. If it has at least two variables, the same bounds hold for proper
solutions.

\item[(iii)] If $H_0^\circ$ satisfies the condition in
Proposition~\ref{propnonemptyvandermondesystem}, then the system contains a
nonzero equation.
\end{enumerate}
\end{corollary}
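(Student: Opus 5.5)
The plan is to assemble the corollary directly from Theorem~\ref{thmboundedcore}, Lemma~\ref{lembasiccounts}, Corollary~\ref{corfixedcolour}, Theorem~\ref{thmvandermondebridge} and Proposition~\ref{propnonemptyvandermondesystem}. First apply Theorem~\ref{thmboundedcore} to the family with its constants $c,C$. This gives an $F$-abundant member $H_0$ or $(H_0,\sigma)$ satisfying $v(H_0^\circ)\le B:=\lfloor2r(C+1)\rfloor$. We then pass from $H_0$ to its core.

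In the coloured case we must show that $(H_0^\circ,\sigma_0)$ is $F$-abundant, where $\sigma_0=\sigma|_{V(H_0^\circ)}$. Lemma~\ref{lembasiccounts} only gives the direction from core to full graph, so the converse is proved directly. Every colour-preserving embedding of $H_0$ into $G$ restricts to a colour-preserving embedding of $H_0^\circ$. Each restricted map has at most $n^{v(H_0)-v(H_0^\circ)}$ extensions. Hence
\[
\Emb_{\sigma_0}(H_0^\circ,G)\ge n^{-(v(H_0)-v(H_0^\circ))}\Emb_\sigma(H_0,G)\ge c'\eps^{C'}n^{v(H_0^\circ)}.
\]
The uncoloured case is the same: $\Emb(H_0^\circ,G)\ge n^{-s}\Emb(H_0,G)$, so $H_0^\circ$ is $F$-abundant. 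Corollary~\ref{corfixedcolour} then supplies an $F$-colouring $\sigma_0$ of $H_0^\circ$ with $(H_0^\circ,\sigma_0)$ $F$-abundant. This proves (i).

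For (ii), apply Theorem~\ref{thmvandermondebridge} to $(H_0^\circ,\sigma_0)$. Its hypotheses hold: $H_0^\circ$ has no isolated vertices, and the abundance just obtained is exactly the assumed lower bound. The theorem yields the translation-invariance $B\mathbf 1=0$, the finite-field and integer lower bounds, and the proper-solution version when $m\ge2$.

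The variables are indexed by $E(H_0^\circ)$. Their number is therefore $m=e(H_0^\circ)\le\binom{v(H_0^\circ)}{r}\le\binom{B}{r}$, since $\binom{k}{r}$ is nondecreasing in $k$.

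Part (iii) is Proposition~\ref{propnonemptyvandermondesystem} applied to $(H_0^\circ,\sigma_0)$. Under the stated inequality it gives $\rank(B)\ge1$, so $B$ has a nonzero row. That row is a nonzero equation of the system.

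The only step that is not pure bookkeeping is the core-abundance step in (i), which runs opposite to Lemma~\ref{lembasiccounts}. It is routine because the extension count is trivial and the exponent of $\eps$ is unchanged. One should also check that the hypotheses of Theorem~\ref{thmvandermondebridge} concern the core graph rather than $H_0$. I expect no genuine obstacle.
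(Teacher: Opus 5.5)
Your proposal is correct and follows essentially the same route as the paper's proof. You apply Theorem~\ref{thmboundedcore}, pass abundance to the core by the restriction and extension count, and invoke Corollary~\ref{corfixedcolour} in the uncoloured case. You then read off the variable bound from $e(H_0^\circ)\le\binom{v(H_0^\circ)}{r}$ and conclude with Theorem~\ref{thmvandermondebridge} and Proposition~\ref{propnonemptyvandermondesystem}.
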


\begin{proof}
By Theorem~\ref{thmboundedcore}, we can choose $H_0$ with
$v(H_0^\circ)\le\lfloor2r(C+1)\rfloor$. The $r$-graph $H_0^\circ$ is
$F$-abundant. Indeed, every embedding of $H_0$ restricts to an
embedding of $H_0^\circ$, and each embedding of $H_0^\circ$ has at most
$v(G)^{v(H_0)-v(H_0^\circ)}$ extensions to a map from $H_0$ to $G$.

In the coloured case, we restrict the given colouring to $H_0^\circ$. In the
uncoloured case, by Corollary~\ref{corfixedcolour}, there is an $F$-colouring
$\sigma_0$ for which $(H_0^\circ,\sigma_0)$ is abundant.
Since $H_0^\circ$ is simple, it has at most
$\binom{v(H_0^\circ)}{r}$ edges. Thus its associated system has at most
$\binom{\lfloor2r(C+1)\rfloor}{r}$ variables. The remaining conclusions
follow from Theorem~\ref{thmvandermondebridge} and
Proposition~\ref{propnonemptyvandermondesystem}.
\end{proof}

When $r=2$ and the family consists of cycles, the system has one equation and a stronger conclusion follows from the additive results of Gir\~ao, Hurley, Illingworth and Michel.
We need the following theorems.
For a translation-invariant equation
$E\colon a_1x_1+\cdots+a_kx_k=0$ with nonzero integer coefficients, its
\emph{genus} is the largest integer $g$ for which $[k]$ can be partitioned
into nonempty sets $I_1,\ldots,I_g$ such that
$\sum_{i\in I_j}a_i=0$ for every $j$; see
\cite[Definition~3.1]{GHIM}. We write $R_E(N)$ for the largest size of a set
$A\subseteq[N]$ containing no proper solution to $E$.

\begin{theorem}[Gir\~ao, Hurley, Illingworth and Michel {\cite[Theorem~1.2]{GHIM}}]\label{thmGHIMsqrt}
Let $E$ be a translation-invariant linear equation of genus at least two. Then
$
R_E(N)=O_E(\sqrt N),(N\ge1) 
$
\end{theorem}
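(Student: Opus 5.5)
The plan follows Ruzsa's classical counting argument for equations of positive genus, specialised to genus two. Write $E\colon a_1x_1+\cdots+a_kx_k=0$ and use the genus hypothesis to fix a partition $[k]=I_1\cup I_2$ into nonempty sets with $\sum_{i\in I_1}a_i=\sum_{i\in I_2}a_i=0$. If $g>2$, merge blocks until only two remain. Since all $a_i$ are nonzero, each block has at least two indices. Put $f_j(x)=\sum_{i\in I_j}a_ix_i$ for $x\in\mathbb Z^{I_j}$. Each $f_j$ is itself translation-invariant. On $[N]^{I_j}$ it takes values in an interval of length at most $\alpha N$, where $\alpha=\sum_i|a_i|$. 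Let $A\subseteq[N]$ contain no proper solution. The goal is a contradiction once $|A|\ge K\sqrt N$ for a large constant $K=K(E)$.

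\emph{Step 1: restrict to two free variables per block.} In each block pick two indices, say $1,2\in I_1$ and $3,4\in I_2$. Fix the remaining $k-4$ variables to be distinct elements of $A$; call this set $Z$. Let $A'=A\setminus Z$, so $|A'|\ge|A|-k$. For $u,v,u',v'\in A'$, the tuple with these four values in positions $1,2,3,4$ and the fixed values elsewhere solves $E$ exactly when
\[
a_1u+a_2v+a_3u'+a_4v'=\beta,
\]
where $\beta$ is an integer determined by the fixed values. The tuple is proper exactly when $u,v,u',v'$ are pairwise distinct, because the other entries are distinct and lie outside $A'$. The key point is that we have freedom in choosing $Z$. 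Averaging over all choices of $Z$ and using translation invariance of each $f_j$, the value $\beta$ is spread over $O(N)$ possible values.

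\emph{Step 2: count representations.} Let $r_1(t)$ be the number of pairs $(u,v)\in A'^2$ with $a_1u+a_2v=t$, and define $r_2$ similarly with $a_3,a_4$. The number of solutions of the four-variable equation is $\sum_t r_1(t)r_2(\beta-t)$. Each $r_j$ has total mass $|A'|^2$ and is supported on an interval of length $O(N)$. I would average the count over the shifts $\beta$ arising from the choices of $Z$, which is where the spreading from Step 1 is used. This should give an average of at least $c|A'|^4/N$ solutions. The non-proper solutions, those with a coincidence among $u,v,u',v'$, number $O(|A'|^2)$ for each $\beta$, since a coincidence together with the equation leaves one free parameter. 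So a proper solution exists once $|A'|^4/N\gg|A'|^2$, that is, once $|A|\ge K\sqrt N$. This gives $R_E(N)=O_E(\sqrt N)$.

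\emph{Main obstacle.} The delicate step is the averaging over $\beta$ in Step 2. A naive Cauchy--Schwarz only produces solutions of the symmetric equation $a_1u+a_2v=a_1u_*+a_2v_*$, not of $E$, because the coefficient pairs $(a_1,a_2)$ and $(a_3,a_4)$ differ. I would first try to exploit that $\beta$ ranges over the values of $-(f_1+f_2)$ on the fixed entries, and that these values are themselves sums $t_1+t_2$ of block values with zero coefficient sum. This should let one write the total count over all choices as $\sum_{t}R_1(t)R_2(-t)$, where $R_j$ counts full tuples in block $j$. Then the total count is at least the number of pairs with $f_1(x)=-f_2(y)$, with $|A|^{|I_1|}$ and $|A|^{|I_2|}$ tuples mapped into $O(N)$ values.

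If this is not enough to force a large overlap, the fallback is to handle the case $I_1=\{1,2\}$, so that $a_2=-a_1$, first. There $f_1$ is a multiple of a difference, and the additive energy of $A$ gives the lower bound directly. The general case would then be reduced to it by choosing the fixed entries of $I_1$ to absorb all but two coefficients.
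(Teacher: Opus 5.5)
The paper gives no proof of this statement; it quotes it from \cite{GHIM}. Your proposal has a genuine gap, at exactly the step you flag. In Step~2 the claimed average of $c|A'|^4/N$ solutions is never justified, and averaging over $Z$ cannot supply it. Summing the four-variable counts at $\beta(Z)$ over all admissible $Z$ just reassembles the number of solutions of $E$ in $A^k$, up to lower-order terms. That number is $\sum_t R_1(t)R_2(-t)$, where $R_j(t)=\#\{x\in A^{I_j}: f_j(x)=t\}$. The fact that $\beta$, $f_1$ and $f_2$ take only $O(N)$ values shows, by pigeonhole, that $R_1$ and $R_2$ each have popular values. It says nothing about whether $R_1$ and $t\mapsto R_2(-t)$ correlate. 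That correlation, $\sum_t R_1(t)R_2(-t)\gg |A|^k/N$, is the entire content of the argument, and it is available only when positivity is. If the coefficient multiset has the form $S\cup(-S)$, then $E$ reads $g(x)=g(y)$ on disjoint variable sets. Cauchy--Schwarz then gives $\sum_s r_g(s)^2\ge |A|^k/O(N)$, where $r_g(s)$ counts tuples with $g=s$. This covers every four-variable genus-two equation, since $a(x_1-x_2)+b(x_3-x_4)=0$ is $ax_1+bx_3=ax_2+bx_4$, and there the Sidon-type counting you describe does go through. For a general genus-two equation such as $x_1-x_2+x_3+x_4-2x_5=0$, the count is $\int_0^1\prod_i\widehat{1_A}(a_i\theta)\,d\theta$, and the integrand has no sign. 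Nothing in your proposal prevents the minor arcs from cancelling the main term of order $|A|^k/N$.

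The fallback does not close the gap either. When $I_1=\{1,2\}$ but $|I_2|\ge 3$, the integrand is $|\widehat{1_A}(a_1\theta)|^2\prod_{i\in I_2}\widehat{1_A}(a_i\theta)$. This is still signed, so additive energy, which controls $\int|\widehat{1_A}|^4$, gives no lower bound. The reduction by fixing all but two entries of $I_1$ also fails. The two remaining coefficients need not be negatives of each other; a block with coefficients $(1,1,-2)$ contains no such pair. What remains is then an inhomogeneous form $a_iu+a_jv+\mathrm{const}$ that is not a multiple of a difference, and averaging over the fixed entries again returns the original signed count. The missing ingredient is a way to produce a proper solution, or a lower bound on the number of solutions, when the two zero-sum blocks have different coefficient patterns. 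This is the point where the Sidon-type argument stops working for solutions in distinct integers, and it is what the cited proof in \cite{GHIM} has to supply.
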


\begin{theorem}[Gir\~ao, Hurley, Illingworth and Michel {\cite[Theorem~4.3]{GHIM}}]\label{thmGHIMcyclegenus}
Let $F$ be a graph, and let $(C_\ell,\sigma)$ be an $F$-coloured cycle which is $F$-abundant. Write the vertices of $C_\ell$ as $v_1,\ldots,v_\ell$, with $v_{\ell+1}=v_1$. Then, for every integer $Q\ge |V(F)|$ and every injection $a\colon V(F)\to[Q]$, the equation
$
\sum_{i=1}^{\ell}
\bigl(a(\sigma(v_{i+1}))-a(\sigma(v_i))\bigr)x_i=0
$
has genus at least two.
\end{theorem}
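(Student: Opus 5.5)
The plan is to prove the contrapositive with a Ruzsa--Szemer\'edi type construction that turns a set of integers with few solutions into an $F$-partite graph with few coloured cycles. First I would check the easy parts of the genus condition. Since $\sigma$ is an $F$-colouring, $\sigma(v_i)\ne\sigma(v_{i+1})$, and since $a$ is injective every coefficient $a(\sigma(v_{i+1}))-a(\sigma(v_i))$ is nonzero. The coefficients telescope to zero, so the equation is translation-invariant of genus at least one. It therefore suffices to assume the genus is exactly one and contradict the $F$-abundance of $(C_\ell,\sigma)$.

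\textbf{Construction.} Given $A\subseteq[N]$ of density $\delta$, I would build an $F$-partite graph $G_A$ with parts $V_b=\{b\}\times[(2Q+1)N]$ for $b\in V(F)$. For every $x\in[N]$ and $d\in A$, place the copy $F_{x,d}$ of $F$ mapping $b\mapsto(b,x+a(b)d)$. Two vertices $(b,y)$ and $(b',y')$ with $b\ne b'$ determine $d=(y'-y)/(a(b')-a(b))$ and then $x$. Hence the copies are edge-disjoint. As in the proof of Theorem~\ref{thmvandermondebridge}, every vertex lying in some copy lies in roughly $\delta N$ of them. After trimming sparse vertices as in Lemma~\ref{lemcanonical}, or working cyclically modulo a prime as in Theorem~\ref{thmvandermondebridge}, the graph is uniformly $\Omega_Q(\delta)$-far.

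\textbf{Counting cycles.} A colour-preserving homomorphism of $C_\ell$ sends $v_i$ to $(\sigma(v_i),y_i)$. Each edge $v_iv_{i+1}$ requires $y_{i+1}-y_i=(a(\sigma(v_{i+1}))-a(\sigma(v_i)))d_i$ with $d_i\in A$. Summing around the cycle gives exactly the stated equation in $(d_1,\ldots,d_\ell)$. Hence
\[
\Hom_\sigma(C_\ell,G_A)\le O(N)\cdot\bigl|\{d\in A^\ell:\text{$d$ solves the equation}\}\bigr|.
\]
Abundance together with Lemma~\ref{lembasiccounts} then gives at least $c'\delta^{C}N^{\ell-1}$ solutions in $A^\ell$ for every set $A$ of density $\delta$ and all large $N$. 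To let $\delta\to0$ with $N$, I would use the uniformity of constants across $\eps$ with thresholds, as in Lemma~\ref{lembad}: fix $\delta$ first, then take $N$ large.

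\textbf{Main obstacle.} The remaining and hardest step is arithmetic. For a genus-one translation-invariant equation, I need sets $A\subseteq[N]$ of density $\delta$ with fewer than $\delta^{C'}N^{\ell-1}$ solutions for every fixed $C'$; that is, a density-versus-count gap that is superpolynomial in $\delta$. Genus one means no proper partition of the variables into zero-sum blocks. So the only ``trivial'' solutions forced in every set are the constant ones, of which there are at most $N\ll\delta^{C'}N^{\ell-1}$ when $\ell\ge3$; the case $\ell=2$ cannot occur for a cycle. I would first try Ruzsa's Behrend-type construction for genus-one equations. Take $A$ to be integers whose base-$M$ digits lie on a fixed sphere, with $M$ large relative to the coefficients so that no carries occur. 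A strict-convexity argument on the digit vectors should then force every solution to be constant on each zero-sum block. Genus one forces a solution to be constant overall, which gives density $e^{-O(\sqrt{\log N})}$ with only trivial solutions. If the convexity argument fails for mixed-sign coefficients, I would fall back on a random sparsification of such a set, or on a direct averaging argument over dilates, to reach the required superpolynomial gap. In either case the result contradicts the polynomial lower bound and proves that the genus is at least two.
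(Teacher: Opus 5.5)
The paper does not prove this statement; it imports it from \cite{GHIM}. Your reduction to arithmetic is sound: the graph $G_A$, the edge-disjointness of the copies $F_{x,d}$, and the identification of colour-preserving homomorphic cycles with solutions $d\in A^\ell$ (times a factor for the starting point) are all correct. The gap is the arithmetic step, which you attempt for the \emph{given} injection $a$.

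\textbf{The arithmetic step fails as planned.} The sphere/strict-convexity argument forces constancy only when all coefficients but one have the same sign, so that one variable is an average of the others. For genus-one equations with at least two coefficients of each sign it is false.

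For example, let $\sigma$ map $C_4$ bijectively onto a $4$-cycle $b_1b_2b_3b_4$ of $F$, and take $(a(b_1),a(b_2),a(b_3),a(b_4))=(2,3,1,4)$. The equation is $x_1-2x_2+3x_3-2x_4=0$, which has genus one. Yet the digit vectors $(4,1,1,0,3,1)$, $(0,2,2,4,2,0)$, $(0,1,1,4,3,1)$, $(2,0,0,2,4,2)$ all have squared norm $28$ and satisfy it coordinatewise. With base at least $17$ there are no carries, so they give a non-constant solution inside a Behrend sphere set.

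Your fallbacks do not repair this. Passing to a random subset of relative density $\theta$ multiplies the density by $\theta$ and the expected number of solutions by at least $\theta^\ell$. Hence it cannot create a gap that is superpolynomial in the density. ``Averaging over dilates'' is not an argument. As written, you would need a superpolynomial density--count gap for an arbitrary genus-one invariant equation. Nothing you cite supplies this.

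\textbf{The missing idea is that you may change the injection.} Suppose $\sigma(v_i)=\sigma(v_j)$ with $i<j$. Then the coefficients indexed by $i,\dots,j-1$ telescope to $a(\sigma(v_j))-a(\sigma(v_i))=0$. This is a proper zero-sum block, so the genus is at least two for \emph{every} $a$. Hence genus one for some $a$ forces $\sigma$ to be a bijection onto an $\ell$-cycle $b_1\cdots b_\ell$ of $F$.

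In that case, build $G_A$ with the injection $a'(b_i)=i$, extended injectively to $V(F)$. The cycle equation becomes $x_1+\cdots+x_{\ell-1}=(\ell-1)x_\ell$, where your sphere argument is valid: an average of points on a sphere lies on the sphere only if the points coincide. Abundance of $(C_\ell,\sigma)$ does not refer to $a$, so a Behrend set for this equation gives a uniformly far host with only the $O(p\,|A|)$ trivial coloured cycles. This contradicts abundance since $\ell\ge3$.

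\textbf{A secondary point: refuting abundance at a fixed $\eps$.} Behrend sets in $[N]$ have density tending to $0$, so ``fix $\delta$ first, then take $N$ large'' does not by itself give hosts of fixed farness. Instead, fix one Behrend set $A_0\subseteq[M]$ with $M$ large in terms of $C$. Then blow up $G_{A_0}$, replacing each copy of $F$ by many edge-disjoint copies inside its blow-up. This preserves both the farness parameter and the ratio of coloured cycles to $n^\ell$, and gives arbitrarily large hosts.
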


\begin{corollary}\label{corboundedgenuscyclewitness}
Let $\cC$ be a family of $F$-coloured cycles which is 
$F$-abundant with constants $c,C>0$. Then there is a member
$(C_\ell,\sigma)\in\cC$ with the following properties.

\begin{enumerate}
\item[(i)] The cycle $(C_\ell,\sigma)$ is $F$-abundant, and
$\ell\le\lfloor4(C+1)\rfloor$.

\item[(ii)] Write the vertices of $C_\ell$ as
$v_1,\ldots,v_\ell$, where $v_{\ell+1}=v_1$. For every integer $Q\ge f$
and every injection $a\colon V(F)\to[Q]$, the equation
$
\sum_{i=1}^{\ell}
\bigl(a(\sigma(v_{i+1}))-a(\sigma(v_i))\bigr)x_i=0
$
has genus at least two.

\item[(iii)] For every fixed equation $E$ in~(ii), we have
$R_E(N)=O_E(\sqrt N)$.
\end{enumerate}

If $\cC$ is an uncoloured cycle family, then there are a cycle
$C_\ell\in\cC$ and an $F$-colouring $\sigma$ of $C_\ell$ for which
{\rm(i)}--{\rm(iii)} hold.
\end{corollary}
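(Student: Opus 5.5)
The plan is to apply Theorem~\ref{thmboundedcore} with $r=2$, and then read off (ii) and (iii) from the cited results of Gir\~ao, Hurley, Illingworth and Michel.

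\emph{Coloured case.} By Theorem~\ref{thmboundedcore} with $r=2$, the family contains an $F$-abundant member $(C_\ell,\sigma)$ with $v(C_\ell^\circ)\le\lfloor 4(C+1)\rfloor$. A cycle has no isolated vertices, so $C_\ell^\circ=C_\ell$ and $\ell=v(C_\ell)\le\lfloor4(C+1)\rfloor$. This proves (i). Since $(C_\ell,\sigma)$ is an $F$-abundant $F$-coloured cycle, Theorem~\ref{thmGHIMcyclegenus} applies directly and gives (ii) for every $Q\ge f=|V(F)|$ and every injection $a$.

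For (iii), I first check that the equation in (ii) is a legitimate input to Theorem~\ref{thmGHIMsqrt}.
\begin{itemize}
\item Translation invariance: the coefficients telescope around the cycle, so $\sum_i(a(\sigma(v_{i+1}))-a(\sigma(v_i)))=0$.
\item Nonzero coefficients: each $v_iv_{i+1}$ is an edge, and $\sigma$ is a homomorphism into the simple graph $F$, so $\sigma(v_i)\ne\sigma(v_{i+1})$. Since $a$ is injective, every coefficient is nonzero.
\end{itemize}
The genus is at least two by (ii). Theorem~\ref{thmGHIMsqrt} then gives $R_E(N)=O_E(\sqrt N)$.

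\emph{Uncoloured case.} Theorem~\ref{thmboundedcore} applied to the uncoloured family gives an $F$-abundant cycle $C_\ell$ with $\ell\le\lfloor4(C+1)\rfloor$. Corollary~\ref{corfixedcolour} then supplies an $F$-colouring $\sigma$ for which $(C_\ell,\sigma)$ is $F$-abundant. Its hypothesis $e(F)>0$ is standing, and the bound on $\ell$ does not depend on the colouring. Items (ii) and (iii) follow exactly as in the coloured case.

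The only point needing care is this verification that the equation meets the hypotheses of Theorem~\ref{thmGHIMsqrt}, namely nonzero integer coefficients and translation invariance; I expect no real obstacle beyond it. One could also note that the bound $\ell\le\lfloor4(C+1)\rfloor$ is exactly the case $r=2$ of $\lfloor2r(C+1)\rfloor$.
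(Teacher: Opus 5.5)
Your proposal is correct and follows the paper's proof: Theorem~\ref{thmboundedcore} with $r=2$ (using $C_\ell^\circ=C_\ell$) gives (i), Theorem~\ref{thmGHIMcyclegenus} gives (ii), Theorem~\ref{thmGHIMsqrt} gives (iii), and Corollary~\ref{corfixedcolour} fixes an abundant colouring in the uncoloured case. You also check explicitly that the equation is translation-invariant with nonzero integer coefficients, which is a detail the paper leaves implicit.
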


\begin{proof}
By Theorem~\ref{thmboundedcore} with $r=2$, we obtain an
abundant cycle of length at most $\lfloor4(C+1)\rfloor$. This proves~(i).
By Theorem~\ref{thmGHIMcyclegenus}, every equation in~(ii) has genus at
least two. By Theorem~\ref{thmGHIMsqrt}, each fixed equation $E$ in~(ii)
satisfies $R_E(N)=O_E(\sqrt N)$. This proves~(ii) and~(iii).
For an uncoloured cycle family, Theorem~\ref{thmboundedcore} first provides
the cycle. By Corollary~\ref{corfixedcolour}, it has a fixed abundant
$F$-colouring. The same argument proves~(i)--(iii).
\end{proof}

We emphasize the direction of Theorem~\ref{thmvandermondebridge}. Coloured hypergraph abundance implies a polynomial lower bound for the number of solutions of the associated linear system. We do not use or assert the converse. Such a converse is open even for general coloured graphs. When $r\ge3$, the associated system may contain several equations, so the single-equation genus used in Corollary~\ref{corboundedgenuscyclewitness} has no direct general analogue.

\subsection{Property testing}\label{sectesting}

We retain the fixed $r$ and $F$ from the preceding sections. Let $\cP$ be a
property of finite simple $r$-graphs, that is, a class of $r$-graphs closed
under isomorphism. We write $\cP_n$ for the members of $\cP$ with $n$
vertices, and assume that $\cP_n\neq\varnothing$ for every $n$.

For an $n$-vertex $r$-graph $G$, we set
$\dist_{\cP}(G)=
\min\{|E(G)\mathbin{\triangle}E(G')|/n^r:
V(G')=V(G),\ G'\in\cP_n\}$.
We use the dense adjacency-query model~\cite{GGR98}. An adjacency query
specifies an $r$-set $e\subseteq V(G)$ and asks whether $e\in E(G)$. For
$0<\eps\leq1$, a one-sided $\eps$-tester for $\cP$ is a randomized algorithm
with adjacency-query access to an $n$-vertex $r$-graph $G$. It always accepts
when $G\in\cP_n$, and it rejects with probability at least $2/3$ when
$\dist_{\cP}(G)\geq\eps$. No condition is required when
$0<\dist_{\cP}(G)<\eps$. The query complexity is the maximum number of
adjacency queries made by the tester.

By Theorem~\ref{thmboundedcore}, every $F$-abundant family with
constants $c,C>0$ contains an $F$-abundant member $H_0$ such
that $v(H_0^\circ)\leq\lfloor2r(C+1)\rfloor$. We use this fixed graph
together with a blow-up argument.

The next lemma removes the order threshold in the definition of the
$F$-abundance of a fixed graph.
It first proves a homomorphism bound. The embedding bound holds
when the order is large enough in terms of the distance parameter.

\begin{lemma}\label{lemblowupamplification}
Let $H$ be $F$-abundant with constants $c,C>0$, and let
$h=v(H)$. Suppose that $0<\eta\leq1$ and that $G$ is a finite $r$-graph
with $\dist_F(G)\geq\eta$. Then the following statements hold.

\begin{enumerate}
\item[(i)] We have
$\Hom(H,G)\geq c\eta^C v(G)^h$.

\item[(ii)] If $v(G)=n$ and
$n\geq2\binom{h}{2}/(c\eta^C)$, then
$\Emb(H,G)\geq(c/2)\eta^C n^h$.
\end{enumerate}
\end{lemma}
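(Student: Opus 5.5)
Here $\dist_F(G)$ means the minimum number of edge deletions making $G$ $F$-free, divided by $v(G)^r$. That is, $\dist_F(G)\ge\eta$ says $G$ is $\eta$-far from being $F$-free; the only issue is that $v(G)$ may be below the threshold $n_0(\eta)$. The plan is a blow-up argument. For an integer $t\ge1$, let $G(t)$ be the $r$-graph on $V(G)\times[t]$ in which $\{(x_1,i_1),\ldots,(x_r,i_r)\}$ is an edge exactly when $\{x_1,\ldots,x_r\}\in E(G)$; in particular the $x_j$ are distinct. Then $v(G(t))=tn$, where $n=v(G)$. The homomorphisms transfer through the projection $\pi\colon G(t)\to G$:
\[
\Hom(H,G(t))=t^h\Hom(H,G).
\]
Indeed, a map $V(H)\to V(G)\times[t]$ is a homomorphism exactly when its projection is a homomorphism into $G$, and the second coordinates are free.

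The key step is to show that $G(t)$ is still $\eta$-far from being $F$-free, that is, $\dist_F(G(t))\ge\eta$. I would use fractional covers. Let $\tau^*(G)$ be the minimum total weight of a weighting $w\colon E(G)\to[0,1]$ with $\sum_{e\in\psi(E(F))}w(e)\ge1$ for every copy $\psi$ of $F$, and let $\nu^*(G)$ be the dual fractional packing number. Every copy of $F$ in $G$ lifts to $t^{v(F)}$ copies in $G(t)$; lifts of one copy use distinct edges, since for $v(F)\ge r$ a lifted edge determines the lifts of its own vertices. Each edge of $G(t)$ lies in lifts of at most $\Hom$-type multiplicities with ratio $t^{v(F)-r}$ per edge. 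Spreading a fractional packing of $G$ uniformly over these lifts gives
\[
\nu^*(G(t))\ge t^{r}\nu^*(G).
\]
It remains to compare integral and fractional quantities. Trivially the deletion number of $G(t)$ is at least $\tau^*(G(t))=\nu^*(G(t))$. In the other direction we need $\nu^*(G)$ to be comparable to the deletion number $\eta n^r$ of $G$, and this is the main obstacle. Integrality gaps could lose a factor, and the statement has the same constants $c,C$ with no loss.

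First I would try a cleaner route avoiding LP duality. Take a minimum deletion set $D_t$ of $G(t)$ and average over random transversals. For a uniformly random choice of one index $i_x\in[t]$ per vertex $x$, the induced copy of $G$ inside $G(t)$ must be made $F$-free by $D_t$, so it meets $D_t$ in at least $\eta n^r$ edges. Each edge of $G(t)$ lies in the random transversal with probability $t^{-r}$. Hence
\[
|D_t|\ge t^r\eta n^r=\eta\, v(G(t))^r,
\]
exactly. This gives the claim with no loss, which is the key step.

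Now take $t$ large so that $tn\ge n_0(\eta)$, and apply the abundance of $H$ to $G(t)$ with parameter $\eta$:
\[
t^h\Hom(H,G)=\Hom(H,G(t))\ge\Emb(H,G(t))\ge c\eta^C(tn)^h.
\]
Dividing by $t^h$ proves (i). For (ii), subtract the non-injective maps, of which there are at most $\binom{h}{2}n^{h-1}$, as in Lemma~\ref{lembasiccounts}. When $n\ge2\binom h2/(c\eta^C)$ this loss is at most $(c/2)\eta^Cn^h$, which gives the stated embedding bound.
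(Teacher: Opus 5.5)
Your final argument is correct and is essentially the paper's proof: you use the same balanced blow-up, the same random-transversal averaging to get $|D_t|\ge\eta(tn)^r$, and the same subtraction of at most $\binom{h}{2}n^{h-1}$ non-injective maps for (ii), with your exact identity $\Hom(H,G(t))=t^h\Hom(H,G)$ replacing the paper's inequality $t^h\Hom(H,G)\ge\Emb(H,G[t])$. The fractional-cover paragraph is never used, and its multiplicity claims are not made precise, so you should simply delete it.
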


\begin{proof}
Let $n=v(G)$. The balanced $t$-blow-up $G[t]$ is obtained by replacing each
vertex $v\in V(G)$ with the cluster $C_v=\{v\}\times[t]$. Thus
$V(G[t])=V(G)\times[t]$. An $r$-set is an edge of $G[t]$ exactly when it has
one vertex in each of $r$ distinct clusters $C_{v_1},\ldots,C_{v_r}$ and
$\{v_1,\ldots,v_r\}$ is an edge of $G$.

We first show that $G[t]$ is $\eta$-far from being $F$-free.
Let $D\subseteq E(G[t])$ and suppose that $G[t]-D$ is $F$-free. Choose one
vertex independently and uniformly from each cluster. Before the edges of
$D$ are removed, the selected vertices induce a copy of $G$. After these
edges are removed, the resulting $r$-graph is $F$-free, since it is a
subgraph of $G[t]-D$. Since $G$ is $\eta$-far from being $F$-free, at least
$\eta n^r$ edges of the selected copy belong to $D$.
Each edge of $D$ belongs to the selected copy with probability $t^{-r}$.
By expectation, $|D|/t^r\geq\eta n^r$. Hence
$|D|\geq\eta(tn)^r$, and $G[t]$ is $\eta$-far from being $F$-free.

Choose $t$ so that $tn$ is above the abundance threshold for $\eta$. By
 abundance,
$\Emb(H,G[t])\geq c\eta^C t^h n^h$.
Every embedding of $H$ into $G[t]$ projects to a homomorphism from $H$ to
$G$. For a fixed homomorphism $\phi:H\to G$, an embedding into $G[t]$ which
projects to $\phi$ is determined by choosing one of the $t$ copies of
$\phi(u)$ for each vertex $u\in V(H)$. Hence there are at most $t^h$ such
embeddings.
Thus
$t^h\Hom(H,G)\geq\Emb(H,G[t])$, and we obtain
$\Hom(H,G)\geq c\eta^C n^h$. This proves~(i).

Every non-injective homomorphism identifies a pair of vertices of $H$.
Therefore
$\Hom(H,G)-\Emb(H,G)\leq\binom{h}{2}n^{h-1}$.
By~(i) and the order assumption,
\[
\Emb(H,G)
\geq
c\eta^C n^h-\binom{h}{2}n^{h-1}
\geq
\frac{c}{2}\eta^C n^h.
\]
This proves~(ii).
\end{proof}

We now obtain a one-sided tester for every input order. The comparison between
the two distances is assumed for every order.

\begin{theorem}\label{thmallordertester}
Let $\cP$ be a property of finite simple $r$-graphs. Let $\cW$ be a family
of finite $r$-graphs, each with at least one edge. Suppose that every
$r$-graph containing a member of $\cW$ lies outside $\cP$. Suppose also that
$\cW$ is $F$-abundant with constants $c,C>0$.
Assume that membership in $\cP$ is decidable and there are constants $a,b>0$, with $a\leq1$, such that every
finite $r$-graph $G$ satisfies
$\dist_F(G)\geq a\dist_{\cP}(G)^b$.
Then there are a graph $W_0\in\cW$ and constants $\kappa,D>0$.
Let $K=W_0^\circ$. These objects have the following properties.

\begin{enumerate}
\item[(i)]
$v(K)\leq\lfloor2r(C+1)\rfloor$.

\item[(ii)] For every $0<\eps\leq1$ and $n$, there is a one-sided $\eps$-tester for $\cP$ on
$n$-vertex inputs.
It uses at most
$\kappa\eps^{-D}$ adjacency queries.

\item[(iii)] For every fixed $\eps$, on all sufficiently large inputs the
tester rejects only after finding a copy of the fixed graph $K$.
\end{enumerate}

\end{theorem}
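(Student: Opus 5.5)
Apply Theorem~\ref{thmboundedcore} to the uncoloured family $\cW$. It gives an $F$-abundant member $W_0\in\cW$ with $v(W_0^\circ)\le\lfloor2r(C+1)\rfloor$, which is (i). Set $K=W_0^\circ$ and $k=v(K)\ge r$; $K$ is nonempty because $W_0$ has an edge. As in the proof of Corollary~\ref{corboundedarithmeticwitness}, $K$ is itself $F$-abundant, say with constants $c',C'>0$, since every embedding of $W_0$ restricts to an embedding of $K$. We also need that every $r$-graph containing a copy of $K$ lies outside $\cP$. This holds on inputs with $n\ge v(W_0)$: a copy of $K$ extends to a copy of $W_0$ by sending the isolated vertices to arbitrary unused vertices, and containing $W_0$ forces the graph out of $\cP$.

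The tester depends on $n$. If $n$ is below an explicit threshold $n_1(\eps)=\max\{v(W_0),\,2\binom{k}{2}/(c'\eta^{C'})\}$, where $\eta=a\eps^b$, it queries all $\binom{n}{r}\le n_1(\eps)^r$ $r$-sets and decides membership in $\cP$ exactly. This uses decidability and costs a polynomial in $1/\eps$ many queries, so it is one-sided and correct. Otherwise the tester repeats the following step $t$ times: sample $k$ vertices uniformly and independently, query all $r$-subsets of the sample, and reject if the sampled map $[k]\to V(G)$ is an embedding of $K$. If no rejection occurs, it accepts. Each round makes $\binom{k}{r}$ queries, a constant depending only on $r$ and $C$. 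Condition (iii) holds by construction.

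\textbf{Correctness.}
\begin{itemize}
\item If $G\in\cP_n$ with $n\ge v(W_0)$, then $G$ contains no copy of $K$, so the tester always accepts.
\item If $\dist_\cP(G)\ge\eps$, the hypothesis gives $\dist_F(G)\ge a\eps^b=\eta$. Lemma~\ref{lemblowupamplification}(ii), applied to $K$, then gives $\Emb(K,G)\ge(c'/2)\eta^{C'}n^k$ once $n\ge n_1(\eps)$.
\end{itemize}
This lemma is what removes the order threshold, which matters because the statement demands a tester for every $n$. In the second case a single round succeeds with probability at least $(c'/2)a^{C'}\eps^{bC'}$. Taking $t=\lceil 2\cdot (2/c')a^{-C'}\eps^{-bC'}\rceil$ rounds pushes the rejection probability above $1-e^{-2}>2/3$.

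\textbf{Query bound.} The total is $\max\{n_1(\eps)^r,\,t\binom{k}{r}\}$. Since $n_1(\eps)$ grows as $\eps^{-bC'}$, both terms are at most $\kappa\eps^{-D}$ with, for example, $D=rbC'$ and a suitable $\kappa$. The main point to verify carefully is this small-$n$ regime: it is exactly where Lemma~\ref{lemblowupamplification} and decidability of $\cP$ are needed, and where the choice of $n_1(\eps)$ must stay polynomial in $1/\eps$.
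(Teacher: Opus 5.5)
Your proposal is correct and follows essentially the same route as the paper. Both choose $W_0$ by Theorem~\ref{thmboundedcore}, pass to the $F$-abundant core $K$, decide membership in $\cP$ by brute force below a threshold polynomial in $1/\eps$, and above it sample $k$-tuples using the bound $\Emb(K,G)\ge(c'/2)\eta^{C'}n^k$ from Lemma~\ref{lemblowupamplification}(ii), which gives $D=rbC'$; the only differences are cosmetic (you query all $\binom{k}{r}$ subsets of each sample rather than just the images of $E(K)$, and size the number of rounds with $e^{-2}$ instead of $1/3$).
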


\begin{proof}
By Theorem~\ref{thmboundedcore}, choose an $F$-abundant graph
$W_0\in\cW$ such that its non-isolated core $K$ has order
$k\leq\lfloor2r(C+1)\rfloor$. This proves~(i).

The graph $K$ is also $F$-abundant. Indeed, every embedding of
$W_0$ restricts to an embedding of $K$. Each embedding of $K$ into an
$n$-vertex graph has at most $n^{v(W_0)-k}$ extensions to an embedding of
$W_0$. Let $c_0,C_0>0$ be abundance constants for $K$. Decrease $c_0$ if
necessary so that $c_0\leq1$, and write $h_0=v(W_0)$. Since $W_0$ has at
least one edge, we have $k\geq r$.

Fix $0<\eps\leq1$. Set
$
\eta=a\eps^b,
\rho_\eps=\frac{c_0}{2}\eta^{C_0},
N_\eps=
\max\left\{
h_0,k,
\left\lceil
\frac{2\binom{k}{2}}{c_0\eta^{C_0}}
\right\rceil
\right\}.
$
Suppose first that $n<N_\eps$. The tester queries all
$\binom{n}{r}$ possible edges. It then decides whether the input belongs to
$\cP$.
Now suppose that $n\geq N_\eps$. The tester performs
$T=\lceil\rho_\eps^{-1}\log3\rceil$ independent trials. In each trial, it
chooses an ordered $k$-tuple uniformly from $V(G)^k$. A tuple with a repeated
vertex is discarded. Otherwise, the tester queries the images of all edges
of $K$. It rejects if all these images are edges of $G$. If no trial finds a
copy of $K$, then it accepts.

We first check that every graph in $\cP$ is accepted. Suppose that
$G\in\cP$ and $n\geq h_0$. Then $G$ contains no copy of $K$. Otherwise, we
could add $h_0-k$ distinct vertices to this copy and obtain a copy of
$W_0$. This is possible since the remaining vertices of $W_0$ are isolated.
It contradicts the assumption on $\cW$. Thus the sampling part always accepts every graph in $\cP$. When
$n<N_\eps$, the tester decides membership in $\cP$ exactly and therefore
also accepts every graph in $\cP$.

We next consider a graph $G$ with $\dist_{\cP}(G)\geq\eps$. By the distance
assumption, $\dist_F(G)\geq\eta$. If $n\geq N_\eps$, then by
Lemma~\ref{lemblowupamplification} 
$\Emb(K,G)\geq\rho_\eps n^k$. Hence one trial finds an embedding of $K$ with
probability at least $\rho_\eps$. The probability that all trials fail is at
most
\[
(1-\rho_\eps)^T
\leq
\exp(-\rho_\eps T)
\leq
\frac13.
\]
Thus the tester rejects every graph which is $\eps$-far from $\cP$ with
probability at least $2/3$.

It remains to count the queries. There is a constant $M$, depending only on
the fixed graphs and constants, such that
$N_\eps\leq M\eps^{-bC_0}$. When $n<N_\eps$, querying the whole input uses at most
$\binom{n}{r}\leq n^r\leq M^r\eps^{-bC_0r}$
adjacency queries. The sampling part uses at most
$
e(K)
\left(
2c_0^{-1}a^{-C_0}\eps^{-bC_0}\log3+1
\right)
$
queries. Since $r\geq2$ and $0<\eps\leq1$, both bounds are at most
$\kappa\eps^{-bC_0r}$ after increasing $\kappa$. Thus~(ii) holds with
$D=bC_0r$.

For every fixed $\eps$, the sampling part is used whenever
$n\geq N_\eps$. It rejects only after finding a copy of $K$. This
proves~(iii).
\end{proof}

A subdivision of a graph is obtained by replacing each edge with a path,
where the internal vertices of the replacement paths are all distinct.
Let $\cP_{\mathrm{pl}}$ be the property of planar graphs. Let
$\cW_{\mathrm{Kur}}$ be the family of all subdivisions of $K_5$ and
$K_{3,3}$, where each graph is also regarded as a subdivision of itself.
By Kuratowski's theorem, a graph is planar if and only if it contains no
member of $\cW_{\mathrm{Kur}}$.

\begin{corollary}\label{corplanaritytesting}
The following statements hold.

\begin{enumerate}
\item[(i)] The family $\cW_{\mathrm{Kur}}$ contains a
$K_2$-abundant graph with at most $40$ vertices. The graph $K_{3,3}$ itself
is an explicit choice.

\item[(ii)] Planarity has a one-sided $\eps$-tester using
$O(\eps^{-18})$ adjacency queries for every number of vertices $n$.

\item[(iii)] There is an absolute constant $c>0$ such that, when
$n\geq c\eps^{-9}$, the tester uses $O(\eps^{-9})$ adjacency queries and
rejects only after finding a copy of $K_{3,3}$.
\end{enumerate}
\end{corollary}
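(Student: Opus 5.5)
The plan is to specialize Theorem~\ref{thmallordertester} to $r=2$, $F=K_2$, $\cP=\cP_{\mathrm{pl}}$ and $\cW=\cW_{\mathrm{Kur}}$, taking the explicit witness $W_0=K_{3,3}$ instead of the abstract member supplied by Theorem~\ref{thmboundedcore}. Every member of $\cW_{\mathrm{Kur}}$ has at least one edge. By Kuratowski's theorem, every graph containing a member of $\cW_{\mathrm{Kur}}$ is nonplanar, and planarity is decidable.

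\emph{Step 1: $K_{3,3}$ is $K_2$-abundant with $C=9$.} An $n$-vertex graph $G$ that is $\eps$-far from being $K_2$-free has $e(G)\ge\eps n^2$, so its edge density satisfies $p=2e(G)/n^2\ge2\eps$. Complete bipartite graphs satisfy Sidorenko's inequality, which gives
\[
\Hom(K_{3,3},G)\ge p^9n^6\ge(2\eps)^9n^6.
\]
This also follows from two applications of Jensen's inequality, first over common neighbourhoods of triples and then over vertices. Lemma~\ref{lembasiccounts} converts this to an embedding bound $c\eps^9n^6$ for large $n$. Hence $\cW_{\mathrm{Kur}}$ is $K_2$-abundant with constants $c$ and $C=9$.

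Theorem~\ref{thmboundedcore} then yields some abundant member with non-isolated core of order at most $\lfloor 2\cdot2\cdot(9+1)\rfloor=40$. Since subdivisions of $K_5$ and $K_{3,3}$ have no isolated vertices, this member has at most $40$ vertices. The explicit choice $K_{3,3}$, with $6\le40$ vertices, already works. This proves~(i).

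\emph{Step 2: the distance comparison.} Deleting all edges of $G$ yields an edgeless, hence planar, graph. Therefore
\[
\dist_{\cP_{\mathrm{pl}}}(G)\le e(G)/n^2=\dist_{K_2}(G).
\]
So the hypothesis of Theorem~\ref{thmallordertester} holds with $a=b=1$.

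\emph{Step 3: the query bounds.} I would rerun the proof of Theorem~\ref{thmallordertester} with $W_0=K=K_{3,3}$, which is legitimate because that proof uses only the $F$-abundance of $W_0$ and the bound $v(K)\le\lfloor 2r(C+1)\rfloor$. Both hold here, with $C_0=9$ from Step~1. The resulting exponent is $D=bC_0r=18$, which gives~(ii).

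For~(iii), the threshold satisfies $N_\eps=O(\eps^{-9})$. Above it, the sampling part makes $e(K_{3,3})\cdot O(\eps^{-9})=O(\eps^{-9})$ queries and rejects only after finding a copy of $K_{3,3}$. I expect the main point needing care to be Step~1: verifying the polynomial count for $K_{3,3}$ with exponent exactly $9$, since that exponent determines the bounds $40$, $18$ and $9$.
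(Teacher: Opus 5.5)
Your proposal is correct and follows essentially the same route as the paper. The paper also obtains $\Hom(K_{3,3},G)\ge\gamma^9n^6$ by the Hölder/Jensen chain over triples and then vertices, applies Theorem~\ref{thmboundedcore} with $C=9$, takes $a=b=1$, and reruns the tester of Theorem~\ref{thmallordertester} with the fixed graph $K_{3,3}$; your observation that members of $\cW_{\mathrm{Kur}}$ have no isolated vertices, so the core bound is a vertex bound, makes explicit a step the paper leaves implicit.
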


\begin{proof}
Let $G$ be an $n$-vertex graph and write $\gamma=2e(G)/n^2$. For
$x_1,x_2,x_3\in V(G)$, let
$c(x_1,x_2,x_3)=|N(x_1)\cap N(x_2)\cap N(x_3)|$. By H\"older's inequality,
\[
\begin{aligned}
\Hom(K_{3,3},G)
&=\sum_{x_1,x_2,x_3}c(x_1,x_2,x_3)^3 \\
&\geq
\frac{\left(\sum_{x_1,x_2,x_3}c(x_1,x_2,x_3)\right)^3}{n^6} \\
&=
\frac{\left(\sum_{v\in V(G)}d(v)^3\right)^3}{n^6}
\geq
\gamma^9n^6.
\end{aligned}
\]

If $\dist_{K_2}(G)\geq\eta$, then
$e(G)\geq\eta n^2$ and $\gamma\geq2\eta$. At most
$\binom{6}{2}n^5$ homomorphisms identify two vertices. Thus
$\Emb(K_{3,3},G)\geq\eta^9n^6$ when $n$ is sufficiently large in terms of
$\eta$. Hence $K_{3,3}$ is $K_2$-abundant with exponent $9$.
Since $K_{3,3}\in\cW_{\mathrm{Kur}}$, the family
$\cW_{\mathrm{Kur}}$ is $K_2$-abundant with the same exponent.

By Theorem~\ref{thmboundedcore}, the family $\cW_{\mathrm{Kur}}$ contains an
abundant member with at most $4(9+1)=40$ vertices. The preceding
count identifies the explicit choice $K_{3,3}$. This proves~(i).

Deleting all edges from a graph produces a planar graph. Therefore every
$n$-vertex graph $G$ satisfies
$\dist_{K_2}(G)=e(G)/n^2\geq\dist_{\cP_{\mathrm{pl}}}(G)$.
Thus Theorem~\ref{thmallordertester} applies with $a=b=1$.

For the tester, we use the fixed graph $K_{3,3}$. Its abundance exponent can
be taken as $9$. The large-order part therefore has order threshold
$O(\eps^{-9})$ and uses $O(\eps^{-9})$ queries. When $n$ is below the order threshold, the tester queries all
$\binom{n}{2}$ pairs. Since this threshold is $O(\eps^{-9})$, the number of
adjacency queries is $O(\eps^{-18})$. This proves~(ii) and~(iii).
\end{proof}
\subsection{Limitations}
Polynomial one-sided testability does not imply the existence of a finite
forbidden family. Bipartiteness has a polynomial-query one-sided tester in the
dense graph model~\cite{GGR98}. However, no finite family $\mathcal Q$ of
non-bipartite graphs has the property that every graph far from bipartite
contains a member of $\mathcal Q$.

Let $L=\max\{v(H)\mid H\in\mathcal Q\}$, and choose an odd integer $m>L$.
Let $B_m(s)$ be the balanced blow-up of $C_m$ with vertex classes of order
$s$. We first verify that $B_m(s)$ has odd girth $m$. Suppose that it contains
an odd cycle of length $\ell<m$. The projection of this cycle onto $C_m$ is a
closed walk of length $\ell$. Orient $C_m$, and assign $+1$ to every forward
step and $-1$ to every backward step. Since the walk is closed, the sum of
these signs is divisible by $m$. The sum is odd and has absolute value at most
$\ell<m$, which is impossible. Thus $B_m(s)$ contains no odd cycle of length
less than $m$. Every graph $H\in\mathcal Q$ contains an odd cycle of length at
most $v(H)\le L<m$. It follows that $B_m(s)$ contains no member of
$\mathcal Q$.

There are $s^m$ transversal copies of $C_m$ in $B_m(s)$, and every edge lies
in exactly $s^{m-2}$ of them. Let $D\subseteq E(B_m(s))$ and suppose that
$B_m(s)-D$ is bipartite. The set $D$ meets every transversal copy of $C_m$.
By double counting the pairs $(e,C)$ with $e\in D\cap E(C)$, we obtain
$|D|s^{m-2}\ge s^m$. Hence $|D|\ge s^2$. Since
$v(B_m(s))=ms$, the graph $B_m(s)$ is $m^{-2}$-far from bipartite.

The compactness argument uses ordinary subgraph copies. The corresponding
statement for induced copies is false, even when $F=K_2$. Consider the family
$\{K_3,\overline{K_3}\}$. By $R(3,3)=6$, every set of six vertices contains a
set of three vertices inducing either $K_3$ or $\overline{K_3}$. Double
counting pairs $(S,T)$, where $|S|=6$, $T\subseteq S$, and
$G[T]\in\{K_3,\overline{K_3}\}$, shows that every $n$-vertex graph contains at
least
$\binom{n}{6}/\binom{n-3}{3}=\binom{n}{3}/20$
such vertex triples. At least one of $K_3$ and $\overline{K_3}$ therefore
occurs as an induced subgraph on at least $\binom{n}{3}/40$ vertex triples.
Thus the family is abundant for induced copies.

Neither $K_3$ nor $\overline{K_3}$ is abundant for induced
copies. Balanced complete bipartite graphs are a constant distance from being
$K_2$-free and contain no induced $K_3$. Complete graphs are also a constant
distance from being $K_2$-free and contain no induced $\overline{K_3}$.

The compactness theorem asserts the existence of an abundant member, but it
does not provide a general method for finding one. We now show that this
cannot be avoided.

We say that a Turing machine enumerates a family $\cA$ of finite graphs if it
outputs every member of $\cA$, possibly more than once, and outputs no graph
outside $\cA$.

\begin{theorem}\label{thmundecidableabundance}
There is no algorithm that takes as input a Turing machine enumerating a
family $\cA$ of finite simple graphs and decides whether $\cA$ is
$K_3$-abundant.
\end{theorem}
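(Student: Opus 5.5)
We will reduce the halting problem to $K_3$-abundance of enumerated families. Given a Turing machine $M$ (on empty input), we build a machine $T_M$ that enumerates a family $\cA_M$ defined as follows. The machine $T_M$ simulates $M$ step by step. While $M$ has not halted, after step $t$ it outputs a fixed non-abundant graph $N_t$. If $M$ halts at step $t_0$, it additionally outputs the triangle $K_3$, which is trivially $K_3$-abundant. Then $\cA_M$ contains $K_3$, and hence is $K_3$-abundant, exactly when $M$ halts. To make this work we need a concrete sequence of graphs $N_t$ that are not $K_3$-abundant and whose family $\{N_t\}$ is also not $K_3$-abundant. The simplest choice is a constant family: $N_t=K_4$ for every $t$. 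Indeed $K_4$ is not $K_3$-abundant, since a $K_3$-far blow-up of $C_5$ (or the disjoint balanced blow-up of a triangle with large odd girth elsewhere) has no $K_4$. Concretely, the balanced blow-up of $K_3$ is $K_4$-free and a constant distance from triangle-free. By Theorem~\ref{thmcompact}, the family $\{K_4\}$ is then not abundant. Even without the theorem, a singleton family is abundant iff its member is.

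It remains to handle the case where $M$ never halts and the enumerated family must still be nonempty and well defined. When $M$ never halts, $\cA_M=\{K_4\}$, which is not $K_3$-abundant. When $M$ halts, $\cA_M=\{K_4,K_3\}$, which is $K_3$-abundant via its member $K_3$. In that case the constants come from the removal lemma being trivial for counting $F$ itself. Here one has to check that $K_3$ is $K_3$-abundant with polynomial constants. This is \emph{not} true by Alon's theorem, because $K_3$ is not bipartite! So $K_3$ must be replaced with a genuinely $K_3$-abundant graph, and $K_4$ by a non-abundant one. The right choice is to output the empty-core graph: the single-vertex graph $I_1$ is $K_3$-abundant, since $\Emb(I_1,G)=n$. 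Equivalently, we can use any graph with $v(H^\circ)=0$, as in the proof of Theorem~\ref{thmcompact}. For the non-abundant member we keep $K_3$ itself, which by Alon's theorem~\cite{Alon02} is not $K_3$-abundant.

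So the reduction is as follows. $T_M$ outputs $K_3$ first and then simulates $M$, outputting $I_1$ if and only if $M$ halts. The map $M\mapsto T_M$ is computable. If $M$ halts, then $\cA_M\ni I_1$ is abundant. If $M$ does not halt, then $\cA_M=\{K_3\}$ is not abundant, by Alon's theorem via Behrend's construction. A decider for abundance would therefore decide the halting problem, which is a contradiction. The main point to verify carefully is the non-abundance of $K_3$: Ruzsa--Szemer\'edi graphs from Behrend sets give $\eps$-far graphs with fewer than $\eps^{C}n^3$ triangles for every $C$ when $\eps$ is small. This is exactly the "only if" part of Alon's characterization for the non-bipartite graph $F=H=K_3$. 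Everything else is routine computability bookkeeping.
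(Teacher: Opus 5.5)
Your final reduction is correct, but you should delete the abandoned first draft. The parenthetical about a $K_3$-far blow-up of $C_5$ is meaningless, since every blow-up of $C_5$ is triangle-free. And $K_3$ is not a valid abundant witness, as you noticed yourself. What remains is a valid reduction from the halting problem: $T_M$ enumerates $\{K_3\}$ if $M$ never halts and $\{K_3,I_1\}$ if it halts. Here $\Emb(I_1,G)=n\ge\eps n$, and $\{K_3\}$ is not abundant by the Ruzsa--Szemer\'edi/Behrend construction, which is the non-bipartite direction of Alon's theorem.

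The paper uses the same reduction with different gadgets. It enumerates the graphs $K_4\mathbin{\dot\cup}I_j$, $j\ge1$, while simulating $M$, and outputs $K_2$ if $M$ halts. Abundance of $K_2$ follows from $e(G)\ge\eps n^2$. Non-abundance of the $K_4\mathbin{\dot\cup}I_j$ family is witnessed by the complete tripartite graphs $T_s$. These carry $s^2$ edge-disjoint triangles, so they are $1/9$-far from triangle-free, yet they contain no copy of $K_4$ at all.

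The paper's proof is therefore entirely elementary, whereas yours imports a deep quantitative fact, superpolynomial bounds in triangle removal, to certify the non-halting case. You did not need it. Your first attempt had already shown, via the balanced blow-up of $K_3$, that $K_4$ is not $K_3$-abundant. Keeping $K_4$ and only replacing the halting output $K_3$ by $I_1$ (or $K_2$) would have given essentially the paper's argument. Replacing $K_4$ by $K_3$ was an unnecessary step.
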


\begin{proof}
Let $M$ be a Turing machine. We construct a Turing machine $E_M$ which
enumerates a family $\cA_M$. At stage $j\geq1$, the machine $E_M$ outputs
$K_4\mathbin{\dot\cup}I_j$, where $I_j$ is the graph with $j$ vertices and
no edges. It also simulates one further step of $M$. If $M$ halts, then
$E_M$ also outputs $K_2$.

Suppose first that $M$ halts. Then $K_2\in\cA_M$. If an $n$-vertex graph
$G$ is $\eps$-far from being $K_3$-free, then
$e(G)\geq\eps n^2$, since deleting all edges makes $G$ $K_3$-free. Hence
$\Emb(K_2,G)=2e(G)\geq2\eps n^2$. Thus $K_2$ is $K_3$-abundant, and
therefore $\cA_M$ is $K_3$-abundant.

Now suppose that $M$ does not halt. Then
$\cA_M=\{K_4\mathbin{\dot\cup}I_j:j\geq1\}$. For every positive integer
$s$, let $T_s$ be the complete tripartite graph with vertex classes
$A=\{a_0,\ldots,a_{s-1}\}$,
$B=\{b_0,\ldots,b_{s-1}\}$, and
$C=\{c_0,\ldots,c_{s-1}\}$.
For every $0\leq i,j<s$, consider the triangle
$\{a_i,b_j,c_{(i+j)\bmod s}\}$. No two of these $s^2$ triangles share an
edge. Indeed, an edge between any two of the three vertex classes determines
the remaining index. Thus every set of edges whose deletion makes $T_s$
$K_3$-free contains at least $s^2$ edges. Since $v(T_s)=3s$, we have
$\dist_{K_3}(T_s)\geq1/9$.

The graph $T_s$ is tripartite, so it contains no copy of $K_4$. Hence it
contains no member of $\cA_M$. Since the orders of the graphs $T_s$ are
unbounded, the family $\cA_M$ is not $K_3$-abundant.
We have proved that $\cA_M$ is $K_3$-abundant if and only if $M$ halts.
Therefore an algorithm deciding whether $\cA_M$ is $K_3$-abundant would
decide whether $M$ halts. This contradicts the undecidability of the halting
problem.
\end{proof}

By Theorem~\ref{thmcompact}, a family is $K_3$-abundant exactly when it
contains a $K_3$-abundant member. Thus no general algorithm can always decide
whether such a member exists and return one when it does. In the testing
results above, the graph $W_0$ and its abundance constants are fixed before
the tester is used. They are not computed from a Turing machine enumerating
the family $\cW$.

\section*{Declaration of the Use of Generative AI Tools} Generative AI tools were used during the development of Lemma~\ref{lemtensor}, specifically to explore the product construction used in its proof and to assist in checking the parameter choices and resulting homomorphism estimates. They were also used to proofread the manuscript and polish its English presentation. All AI-generated suggestions incorporated into the manuscript were independently checked and revised by the authors. The authors take full responsibility for the final content of the paper.

\bibliographystyle{plain} \bibliography{refs_final}

\end{document}